\documentclass[12pt]{article}

\usepackage{amsmath}
\usepackage{amsthm}
\usepackage{amsfonts}
\usepackage{mathrsfs}
\usepackage{stmaryrd}
\usepackage{setspace}
\usepackage{fullpage}
\usepackage{amssymb}
\usepackage{breqn}
\usepackage{enumitem}
\usepackage{bbold}
\usepackage{authblk}
\usepackage{comment}
\usepackage{hyperref}
\usepackage{pgf,tikz}
\usepackage{graphicx}
\usepackage{subcaption}

\newtheorem{thm}{Theorem}[section]
\newtheorem{lem}[thm]{Lemma}

\newcommand\ex{\ensuremath{\mathrm{ex}}}
\newcommand\cA{{\mathcal A}}
\newcommand\cB{{\mathcal B}}

\newcommand\cE{{\mathcal E}}
\newcommand\cF{{\mathcal F}}
\newcommand\cG{{\mathcal G}}
\newcommand\cH{{\mathcal H}}

\newcommand\cM{{\mathcal M}}

\newcommand\cR{{\mathcal R}}
\newcommand\cS{{\mathcal S}}

\newcommand{\norm}[1]{\left\|#1\right\|}

\begin{document}

\title{The $(t,p)$-Norm in Classical Extremal Problems}

\author{
Xiamiao Zhao \thanks{Department of Mathematical Sciences, Tsinghua University, Beijing 100084, China.\\ Email: \texttt{zxm23@mails.tsinghua.edu.cn}.}
\hspace{0.2em}
Yuanpei Wang \thanks{ \textit{Corresponding author}. Department of Mathematics, Shanghai University, Shanghai 200444, P.R. China.\\Email:
\small \texttt{boyuan@shu.edu.cn}}

}

\date{}
\maketitle

\begin{abstract}
Given integers $r>t\ge1$ and a real number $p>0$, the $(t,p)$-norm $\norm{\cH}_{t,p}$ of an $r$-graph $\cH$ is the sum of the $p$-th powers of the degrees $d_{\cH}(T)$ over all $t$-subsets $T\subseteq V(\cH)$.  When $t=r-1$, this is the codegree $p$-norm.  For all sufficiently large $n$, we obtain the following results.  The first two apply in both the convex range $p>1$ and the concave range $0<p<1$.  First, for $r$-graphs with matching number at most $s$, we determine the maximum $(t,p)$-norm.  Second, for $k$-intersecting families, we establish an Erd\H{o}s--Ko--Rado-type theorem for the $(t,p)$-norm.  Third, for $P_\ell^r$-free hypergraphs, we determine the maximum $(t,p)$-norm for every $1\le t\le r-1$ and $p>1$.  In each of the three settings, we also characterize all extremal families.
\end{abstract}

\noindent\textbf{Keywords:} hypergraph Tur\'an problems; $(t,p)$-norm; matching; Erd\H{o}s--Ko--Rado theorem; expansion

\medskip
\noindent\textbf{2020 Mathematics Subject Classification:} 05D05, 05C35, 05C65.

\section{Introduction}

Let $[n]=\{1,2,\ldots,n\}$ and let $\binom{[n]}r$ denote the collection of all $r$-subsets of $[n]$.  An \emph{$r$-uniform hypergraph}, or simply an \emph{$r$-graph}, is a family $\cH\subseteq\binom{[n]}r$.  For $T\subseteq[n]$, write
\[
   d_{\cH}(T)=|\{H\in\cH:T\subseteq H\}|
\]
for the \emph{degree} of $T$ in $\cH$.  Given $1\le t\le r-1$ and $p>0$, we define the \emph{$(t,p)$-norm} of $\cH$ by
\begin{equation*}
   \norm{\cH}_{t,p}:=\sum_{T\in\binom{[n]}t}d_{\cH}(T)^p.
\end{equation*}
When $t=r-1$, this is the $p$-norm of the codegree vector.  Such power sums refine ordinary edge-counting extremal problems, since
\[
   \norm{\cH}_{t,1}=\binom rt |\cH|.
\]
Thus the case $p=1$ is exactly the usual Tur\'an problem, whereas the cases $p>1$ and $0<p<1$ exhibit different behavior: convexity favors concentration of degree mass, while concavity favors a more spread-out distribution.

Extremal set theory asks how large a family of finite sets can be under restrictions on matchings, intersections, or forbidden substructures.  The classical theorem of Erd\H{o}s, Ko and Rado \cite{ErdosKoRado1961} determines the maximum size of an intersecting subfamily of $\binom{[n]}r$ for $n$ sufficiently large, and Hilton and Milner \cite{HiltonMilner1967} gave the first non-trivial stability version.  Wilson \cite{Wilson1984} later obtained the exact bound in the Erd\H{o}s--Ko--Rado theorem, while the complete intersection theorem of Ahlswede and Khachatrian \cite{AhlswedeKhachatrian1997} determined the maximum size of a $k$-intersecting family for all parameter ranges.

Another central problem is the Erd\H{o}s matching problem.  For an $r$-graph $\cH$, the \emph{matching number} $\nu(\cH)$ is the maximum number of pairwise disjoint edges in $\cH$.  Erd\H{o}s \cite{Erdos1965} conjectured the exact maximum number of edges in an $r$-graph with a given matching number.  The graph case follows from the theorem of Erd\H{o}s and Gallai \cite{ErdosGallai1959}.  For uniform hypergraphs the conjecture remains open in general, although it is known in several important ranges; see, for example, Frankl \cite{Frankl2013,Frankl2017}, Frankl and Kupavskii \cite{FranklKupavskii2019}, and the stability theorem of Bollob\'as, Daykin and Erd\H{o}s \cite{BollobasDaykinErdos1976}.  The shifting method, surveyed by Frankl \cite{Frankl1987}, is one of the basic tools in this area.

Recently, several authors have studied extremal problems in which the objective is a power sum of degrees or codegrees rather than the number of edges.  Balogh, Clemen and Lidick\'y \cite{BaloghSurvey2021,BaloghClemenLidicky2022} studied hypergraph Tur\'an problems under the $\ell_2$-norm of the codegree vector, and Brooks and Linz \cite{BrooksLinz2026} obtained exact and asymptotic results for related problems.  Chen, I\v{l}kovi\v{c}, Le\'on, Liu and Pikhurko \cite{ChenIlkovicLeonLiuPikhurko2024} introduced a general framework for non-degenerate Tur\'an problems under $(t,p)$-norms.  Very recently, Cao, Lu and Zhang \cite{CaoLuZhangDegreePowers2026} studied degree powers in intersecting families.  More precisely, for every $p\ge2$, they proved that a full $k$-star maximizes $\norm{\cH}_{r-1,p}$ among $k$-intersecting $r$-graphs whenever $n\ge(k+1)(r-k+1)$.  In the intersecting case $k=1$, they further proved that, for every $p\ge2$ and every $1\le t\le r-1$, a full point-star maximizes $\norm{\cH}_{t,p}$ whenever $n\ge2r$.  Thus, their results hold throughout the sharp classical Erd\H{o}s--Ko--Rado ranges, rather than only for sufficiently large $n$.  Related codegree-density questions appear in the work of Mubayi and Zhao \cite{MubayiZhao2007}; see Keevash's survey \cite{Keevash2011} for the broader background on hypergraph Tur\'an theory.

The bounded matching number analogue is closely related to sunflower-counting problems.  The sunflower lemma of Erd\H{o}s and Rado \cite{ErdosRado1960} initiated a long line of work on set systems with prescribed intersections.  Wang and Peng \cite{WangPeng2026}, Zhang, Cao and Lu \cite{ZhangCaoLu2026}, and Zhou and Yuan \cite{ZhouYuan2026} studied sunflower-counting and norm versions of matching-free extremal problems.  In the graph case, the analogous connection between degree powers and star counts was first exploited by Gerbner \cite{GerbnerDegreePowers2025}.  In the codegree case $t=r-1$, certain codegree power sums can be interpreted as counts of sunflowers with prescribed core.  For general $t$, such an exact sunflower interpretation is no longer available, but the same extremal star phenomenon persists.

The last part of the paper concerns expansion paths.  Let $P_\ell^r$ denote the \emph{$r$-uniform linear path} with $\ell$ edges.  Kostochka, Mubayi and Verstra\"ete \cite{KostochkaMubayiVerstraete2015} determined $\ex_r(n,P_\ell^r)$ exactly for $r\ge3$, $\ell\ge4$, and $n$ sufficiently large, and characterized the extremal examples.  Their proof proceeds through asymptotic, stability and exact steps, using shadows and full subgraphs.  We apply their edge-stability theorem both to the original hypergraph and, for $t\ge3$, to a high $t$-shadow.  The low-dimensional cases require separate weighted arguments.  For $t=2$ we define a graph of high pairs and use a local weighted estimate showing that, except for $O(n^{r-2})$ exceptional hyperedges, each hyperedge contains at most $r-1$ high pairs.  For $t=1$ we use an analogous weighted high-vertex estimate.  These estimates give the sharp asymptotic norms and then, via the edge-stability theorem of Kostochka, Mubayi and Verstra\"ete, the required stability.

\subsection{Main results}

For an integer $s\ge1$, define
\[
   \cH_{n,r,s}:=\{H\in\binom{[n]}r:H\cap[s]\ne\emptyset\}.
\]
This family has matching number at most $s$.
For fixed $t,p$ and a background parameter $c\ge0$, write
\begin{equation*}
   \Phi_{c,t}(\cH):=\sum_{T\in\binom{[n]}t}\left(c+d_{\cH}(T)\right)^p .
\end{equation*}
Thus, $\Phi_{0,t}(\cH)=\norm{\cH}_{t,p}$.
First, we consider the maximum $(t,p)$-norm in the bounded matching number setting.
We will separate the cases $p>1$ and $0<p<1$.
\begin{thm}\label{thm:matching-convex}
Let $r,s,t$ be positive integers with $1\le t\le r-1$, and let $p>1$. There exists $N_+(r,s,t)$ such that for all $n\ge N_+(r,s,t)$, simultaneously for every $c\ge0$, if $\cH\subseteq\binom{[n]}r$ satisfies $\nu(\cH)\le s$, then
\[
   \Phi_{c,t}(\cH)\le \Phi_{c,t}(\cH_{n,r,s}).
\]
Moreover, equality holds if and only if $\cH$ is isomorphic to $\cH_{n,r,s}$.
\end{thm}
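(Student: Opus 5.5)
The plan is to reduce to shifted families, locate a small set of high-degree vertices, and then compare $\cH$ with $\cH_{n,r,s}$ through a degree-dominating bijection between $t$-sets. A bijection of this kind makes the inequality hold for every $c\ge0$ at once.

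\textbf{Step 1: shifting.} For $i<j$ let $S_{ij}$ be the usual shift. It is standard that $\nu(S_{ij}(\cH))\le\nu(\cH)$. The key observation concerns a $t$-set $T$ with $i\in T$, $j\notin T$, together with $T'=T-i+j$. Edges containing both $i$ and $j$, or neither, contribute to $d(T)$ and $d(T')$ in the same way before and after the shift. An edge containing $T\setminus\{i\}$ and exactly one of $i,j$ is moved towards $i$. Hence $d(T)+d(T')$ is preserved and the pair is only spread apart, the larger value going to $T$. Every $t$-set not of this form keeps its degree. Since $x\mapsto(c+x)^p$ is convex for $p>1$ and every $c\ge0$, $\Phi_{c,t}$ does not decrease under shifting. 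For the equality case we must also check that a maximizer is isomorphic to a shifted maximizer. I would do this by noting that a strict spread strictly increases $\Phi_{c,t}$, and otherwise the shift only swaps the pair of degrees. So it suffices to prove the theorem for shifted $\cH$, tracking the equality case back at the end.

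\textbf{Step 2: the cover set.} Fix a large constant $C=C(r,s)$ and let $A$ be the set of vertices of degree greater than $Cn^{r-2}$.
\begin{itemize}
\item $|A|\le s$, since otherwise $s+1$ disjoint edges can be chosen greedily through $s+1$ vertices of $A$: each new edge need only avoid $O(1)$ vertices, which kills $O(n^{r-2})$ edges.
\item The family $\cH_0$ of edges missing $A$ has $\nu(\cH_0)\le s-|A|$, by the same greedy argument. It also has maximum degree at most $Cn^{r-2}$, so $|\cH_0|\le rsCn^{r-2}$: the vertices of a maximal matching of $\cH_0$ cover it.
\item If $|A|=s$, then $\cH_0=\emptyset$, since any edge of $\cH_0$ could be extended by $s$ disjoint edges through $A$. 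Thus $\cH\subseteq\cH_{n,r,s}$ after relabelling. Because $(c+x)^p$ is strictly increasing, $\Phi_{c,t}(\cH)<\Phi_{c,t}(\cH_{n,r,s})$ unless $\cH=\cH_{n,r,s}$. This gives both the inequality and the equality characterization in this case.
\end{itemize}

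\textbf{Step 3: the case $|A|\le s-1$.} Here I would build a bijection $\sigma$ of $\binom{[n]}t$ with $d_\cH(T)\le d_{\cH_{n,r,s}}(\sigma(T))$ for all $T$, with strict inequality for some $T$. Summing $(c+\cdot)^p$ over $T$ then gives $\Phi_{c,t}(\cH)<\Phi_{c,t}(\cH_{n,r,s})$ uniformly in $c$, so this case never attains equality.

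Relabel so that $A\subseteq[s]$. The $t$-sets meeting $A$ are sent by $\sigma$ to themselves; their degrees in $\cH_{n,r,s}$ are the maximum possible, $\binom{n-t}{r-t}$. For $t$-sets $T$ disjoint from $A$, the degree in $\cH_{n,r,s}$ is about $s\binom{n}{r-t-1}$ when $T\cap[s]=\emptyset$, and larger otherwise. In $\cH$, $d_\cH(T)$ is at most $|A|\binom{n}{r-t-1}$ plus $d_{\cH_0}(T)$. The total $\sum_T d_{\cH_0}(T)=\binom rt|\cH_0|=O(n^{r-2})$ is negligible on average. So all but $o(n^t)$ of these $T$ have degree at most $(s-1+o(1))\binom{n}{r-t-1}$, a deficit of order $n^{r-t-1}$, and on them $\sigma$ can be taken to be the identity.

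\textbf{Main obstacle.} The difficulty is the exceptional $t$-sets disjoint from $A$ whose $\cH_0$-degree is large. For $t\ge2$, shiftedness of $\cH$ (equivalently of $\cH_0$) should force such sets to concentrate on small labels. Their number is then $O(n^{t-1})$, and each has degree at most $\binom{n-t}{r-t}$. I would match them injectively, by $\sigma$, to $t$-sets that meet $[s]\setminus A$, which has at least one element. Such targets number $\Theta(n^{t-1})$ and have degree $\Theta(n^{r-t})$ in $\cH_{n,r,s}$. I must then check that the displaced targets can be absorbed among the typical sets, using their $\Theta(n^{r-t-1})$ slack. Getting the constants right in this counting, i.e.\ bounding the exceptional sets via $|\cH_0|=O(n^{r-2})$ and shiftedness, is the step I expect to require the most care. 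If a clean bijection fails, the fallback is weak submajorization of the sorted degree vectors combined with Karamata's inequality, which also works for all $c$ simultaneously.
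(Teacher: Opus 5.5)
Steps 1 and 2 are sound (Step 2 is essentially Lemma~\ref{lem:matching-stability}). The central claim of Step 3 is false, however: in the case $|A|\le s-1$ a degree-dominating bijection $\sigma$ need not exist, even for shifted $\cH$.

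Such a $\sigma$ exists only if, for every $\lambda$, $\cH$ has no more $t$-sets of degree $\ge\lambda$ than $\cH_{n,r,s}$ does. In $\cH_{n,r,s}$ the degrees take just two values:
\begin{itemize}
\item $D_t=\binom{n-t}{r-t}$ on the $\binom nt-\binom{n-s}t$ sets meeting $[s]$;
\item $\delta:=\binom{n-t}{r-t}-\binom{n-s-t}{r-t}$ on all other $t$-sets.
\end{itemize}
Now take $\cF=\{H\in\binom{[n]}r:|H\cap[2s+1]|\ge2\}$. It is shifted and $\nu(\cF)\le s$. Every vertex degree is at most $2s\binom{n-2}{r-2}$, so $A=\emptyset$ for every $C\ge 2s$. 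A $t$-set $T$ with $|T\cap[2s+1]|=1$ has $d_\cF(T)=\binom{n-t}{r-t}-\binom{n-t-2s}{r-t}>\delta$. Hence $\cF$ has $\binom nt-\binom{n-2s-1}t>\binom nt-\binom{n-s}t$ sets of degree exceeding $\delta$.

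In your language, the exceptional sets number about $(2s+1)\binom n{t-1}$. The unused targets of degree above $\delta$ number only about $(s-|A|)\binom n{t-1}$. The displaced exceptional sets cannot be absorbed among the typical targets, because those have degree $\delta$, which is smaller than theirs. No choice of constants or use of shiftedness rescues this. The weak-submajorization fallback is only named, not argued.

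What is missing is a quantitative use of convexity in place of pointwise domination.
\begin{itemize}
\item Write $a=|A|$ and relabel so that $A=[a]$; then $\cH\subseteq\cH_{n,r,[a]}\cup\cH_0$.
\item For $T\cap[a]=\emptyset$, the $\cH_{n,r,[a]}$-degree is a constant $\delta_a$, and $0\le d_{\cH_0}(T)\le D':=\binom{n-a-t}{r-t}$.
\item The function $g(x)=(c+\delta_a+x)^p-(c+\delta_a)^p$ is convex with $g(0)=0$, so $g(x)\le x\,g(D')/D'$.
\item Hence $\cH_0$ adds at most $\binom rt|\cH_0|\,g(D')/D'=O(n^{t-2})\,g(D')$ to $\Phi_{c,t}(\cH_{n,r,[a]})$.
\item Meanwhile $\Phi_{c,t}(\cH_{n,r,s})$ exceeds $\Phi_{c,t}(\cH_{n,r,[a]})$ by at least $(s-a)\binom{n-s}{t-1}g(D')$. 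This comes from the $t$-sets avoiding $[a]$ but meeting $[s]\setminus[a]$.
\end{itemize}
This gives strict inequality uniformly in $c$ for every $\cH$ with $|A|\le s-1$, shifted or not. Together with your Step 2, it yields the theorem and its equality case without shifting, induction, or Frankl's theorem.

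This is the chord trick of Lemma~\ref{lem:bad-comparison}. The paper uses it differently: it forces a shifted maximal extremal family to contain every edge through vertex $1$, then inducts on $s$ with background $c+\binom{n-t-1}{r-t-1}$.

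Separately, your equality transfer in Step 1 is incomplete. Equal degree multisets do not make a family isomorphic to its shift. The paper closes this by noting that shifting preserves $|\cH|$ and then invoking the uniqueness part of Theorem~\ref{thm:frankl}.
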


\begin{thm}\label{thm:matching-concave}
Let $r,s,t$ be positive integers with $1\le t\le r-1$, and let $0<p<1$. There exists $N_-(r,s,t,p)$ such that for all $n\ge N_-(r,s,t,p)$, if $\cH\subseteq\binom{[n]}r$ satisfies $\nu(\cH)\le s$, then
\[
   \norm{\cH}_{t,p}\le \norm{\cH_{n,r,s}}_{t,p}.
\]
Moreover, equality holds if and only if $\cH$ is isomorphic to $\cH_{n,r,s}$.
\end{thm}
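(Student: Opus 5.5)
We prove Theorem~\ref{thm:matching-concave} by a stability argument: a cover by few vertices, then a comparison of $\norm{\cdot}_{t,p}$ degree by degree. Two preliminary observations set this up.

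First, in $\cH_{n,r,s}$ every $t$-set $T$ has $d(T)=\Theta(n^{r-t})$. Indeed, if $T$ meets $[s]$ then $d(T)=\binom{n-t}{r-t}$; otherwise $d(T)=\binom{n-t}{r-t}-\binom{n-t-s}{r-t}=\Theta(s\,n^{r-t-1})$. Hence
\[
\norm{\cH_{n,r,s}}_{t,p}=\binom{n-s}{t}\bigl(sn^{r-t-1}\bigr)^p(1+o(1))+O\bigl(n^{t-1+(r-t)p}\bigr).
\]
Which term dominates depends on $p$. Since $0<p<1$, the first term dominates when $t-1+(r-t)p<t+(r-t-1)p$, i.e.\ when $p<1$. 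So the main contribution comes from the $\Theta(n^t)$ sets avoiding $[s]$, each of degree about $s\binom{n-t-1}{r-t-1}$.

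Second, the cover step. By the Frankl / Bollob\'as--Daykin--Erd\H{o}s type structure for $\nu(\cH)\le s$ and large $n$, either $\cH$ has a vertex cover $S$ with $|S|\le s$ and all vertices of $S$ of high degree, or $|\cH|\le (s-\varepsilon)\binom{n-1}{r-1}$ in a strong sense. More precisely, by the standard kernel/sunflower argument there is a set $S$ of at most $s$ vertices of degree $\ge \varepsilon n^{r-1}$, and the edges missing $S$ number $O(n^{r-2})$. Indeed, edges avoiding the high-degree vertices have matching number $\le s-|S|$ and bounded maximum degree, so they number $O(n^{r-2})$.

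The main step bounds the norm via degrees of $t$-sets $T$ disjoint from $S$, with $|S|=s'\le s$. For such $T$,
\[
d_\cH(T)\le \sum_{v\in S}\binom{n-t-1}{r-t-1}+d_{\cH-S}(T),
\]
and $\sum_T d_{\cH-S}(T)=\binom rt|\cH-S|=O(n^{r-2})$. By concavity and subadditivity of $x\mapsto x^p$,
\[
\sum_{T}d(T)^p\le \sum_T \bigl(s'\tbinom{n-t-1}{r-t-1}\bigr)^p+\sum_T d_{\cH-S}(T)^p .
\]
If some $T$ has $d_{\cH-S}(T)>0$, a careful accounting is needed. We use
\[
(a+x)^p-a^p\le p\,a^{p-1}x, \qquad a=\Theta(n^{r-t-1}),
\]
so the total gain is $O\bigl(n^{(r-t-1)(p-1)}\cdot n^{r-2}\bigr)$. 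The sets $T$ meeting $S$ contribute at most $O(n^{t-1})\cdot\binom{n-t}{r-t}^p$. If $s'<s$, the leading term drops by a factor $(s'/s)^p$ and the gains are of lower order, so the bound is strict. If $s'=s$, we must show every missing edge through $S$ costs more than any edge outside $S$ gains. Removing an edge $H$ that meets $S$ lowers $d(T)$ for its $\binom{|H\setminus S|}{t}\ge\binom{r-s}{t}$-type subsets $T$ avoiding $S$ (in general at least $\binom{r-|H\cap S|}{t}$ of them). Since $\nu\le s$, each edge $E$ of $\cH-S$ forces many edges through $S$ to be absent: roughly $\Omega(n^{r-2})$ edges meeting $S$ in exactly one vertex and intersecting $E$ must be missing.

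We expect this matching-based charging between edges of $\cH-S$ and missing star edges to be the main obstacle. The plan is to compare the derivative loss $p\,a^{p-1}\cdot\Omega(n^{r-2})$ against the gain from one outside edge, which is at most $\binom rt\,(\text{small degree})^p$. In the concave regime small degrees gain the most, so the worst case is when $d_{\cH-S}(T)$ is tiny, giving a gain of up to $\binom rt$ per edge. This must be beaten by
\[
p\,\bigl(sn^{r-t-1}\bigr)^{p-1}\cdot\Omega(n^{r-2})\cdot\binom{r-1}{t}\,n^{-(r-2)}\cdot\ldots
\]
To make this precise we count, for each $E\in\cH-S$, the edges $\{v\}\cup A$ with $v\in S$, $A\cap E\neq\emptyset$, whose inclusion would create an $(s+1)$-matching. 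These must be missing, and each appears for $O(n^{r-2})$ pairs. The resulting loss is $\gg n^{(r-t-1)(p-1)}\cdot n^{t}\cdot n^{-1}$ per $E$. This dominates once $n\ge N_-(r,s,t,p)$, and this is where the dependence on $p$ enters.

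Finally, for equality: $s'=s$ and $\cH-S=\emptyset$ force $\cH\subseteq\cH_{n,r,s}$ up to relabelling, and strict monotonicity of the norm under adding edges gives $\cH\cong\cH_{n,r,s}$.
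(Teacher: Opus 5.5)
Your overall structure matches the paper's: cover by high-degree vertices, a drop in the leading constant when the cover has fewer than $s$ vertices, and coordinatewise monotonicity otherwise. The case $s'<s$ goes through. The gap is in the case $s'=s$, which is the case containing the extremal family and the equality characterization.

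In that case you never finish the charging argument you flag as the main obstacle: the displayed inequality trails off. Its key claim is also wrong. An edge $\{v\}\cup A$ with $A\cap E\neq\emptyset$ meets $E$, so it can never lie in an $(s+1)$-matching together with $E$. Nothing forces $\Omega(n^{r-2})$ such edges to be missing. As written, the case $s'=s$ is therefore not proved.

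The missing observation is that this case needs no charging at all. Suppose $|S|=s$ and every $v\in S$ has degree greater than $Mn^{r-2}$, with $M=M(r,s)$ large. Then no edge avoids $S$. Indeed, given $E\in\cH$ with $E\cap S=\emptyset$, choose for each $v_i\in S$ in turn an edge $E_i\ni v_i$ that avoids $E$, $S\setminus\{v_i\}$ and $E_1,\ldots,E_{i-1}$. Only $O(n^{r-2})$ edges through $v_i$ meet this bounded set, so a choice always exists, and $E,E_1,\ldots,E_s$ is an $(s+1)$-matching. This already follows from your own claim that $\nu(\cH-S)\le s-|S|$. Hence $\cH\subseteq\{H:H\cap S\neq\emptyset\}$, and monotonicity of $x\mapsto x^p$ gives both the bound and the equality case.

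This is exactly how the paper argues. Lemma~\ref{lem:matching-stability} shows that a family not contained in any $s$-star has a set $D$ of at most $s-1$ vertices missed by only $O(n^{r-2})$ edges. Subadditivity and Jensen then bound its norm by $\bigl((s-1)^p/(t!(r-t-1)!^p)+o(1)\bigr)n^{t+p(r-t-1)}$. Families inside an $s$-star are handled purely by coordinatewise comparison.

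Three smaller points need fixing.
\begin{itemize}
\item To get $O(n^{r-2})$ edges outside $S$, the degree threshold must be $Mn^{r-2}$, not $\varepsilon n^{r-1}$. With the latter threshold you only get $O(\varepsilon n^{r-1})$ outside edges. That still works, but only after choosing $\varepsilon$ small relative to $s^p-(s-1)^p$.
\item When $s'=0$, your derivative bound with $a=0$ is vacuous. There you should apply Jensen's inequality \eqref{eq:jensen} to the $O(n^{r-2})$ edges, as the paper does.
\item Your opening sentence says every $t$-set has degree $\Theta(n^{r-t})$ in $\cH_{n,r,s}$. That is false for $t$-sets avoiding $[s]$, which have degree $\Theta(n^{r-t-1})$, as you yourself note in the next line.
\end{itemize}
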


Taking $c=0$ in Theorem~\ref{thm:matching-convex} gives the ordinary $(t,p)$-norm statement.  The exact value of the extremal norm in Theorems~\ref{thm:matching-convex} and~\ref{thm:matching-concave} is
\begin{equation*}
\norm{\cH_{n,r,s}}_{t,p}=
\sum_{i=0}^{\min\{s,t\}}
\binom{s}{i}\binom{n-s}{t-i}
\left(\binom{n-t}{r-t}-\mathbb{1}_{\{i=0\}}\binom{n-s-t}{r-t}\right)^p .
\end{equation*}
The exact calculation of this norm is given in the next section.
For $p=1$, the assertion follows directly from Frankl's theorem, since $\norm{\cH}_{t,1}=\binom rt|\cH|$.
We separate the cases $p>1$ and $0<p<1$ because they has different proofs.

For $1\le k\le r-1$, define the \emph{full $k$-star}
\[
   \cS_{n,r,k}:=\{H\in\binom{[n]}r:[k]\subseteq H\}.
\]
As in the bounded matching number setting, we give the background form when $p>1$ and focus on the $(t,p)$-norm when $0<p<1$.
\begin{thm}\label{thm:ekr-convex}
Let $1\le k\le r-1$, $1\le t\le r-1$, and let $p>1$.  For all sufficiently large $n$, simultaneously for every $c\ge0$, if $\cH\subseteq\binom{[n]}r$ is $k$-intersecting, then
\[
   \Phi_{c,t}(\cH)\le \Phi_{c,t}(\cS_{n,r,k}).
\]
Moreover, equality holds if and only if $\cH$ is isomorphic to $\cS_{n,r,k}$.
\end{thm}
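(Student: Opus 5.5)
The plan is to split according to whether $\cH$ has a common $k$-set. Note first that $\Phi_{c,t}$ is monotone under adding edges. Every $k$-intersecting family all of whose edges contain a fixed $k$-set $K$ is therefore contained in a copy of $\cS_{n,r,k}$. For such $\cH$ we get $\Phi_{c,t}(\cH)\le\Phi_{c,t}(\cS_{n,r,k})$. The inequality is strict unless $\cH$ is the full star, because removing an edge strictly decreases $d_{\cH}(T)$ for the $\binom rt\ge1$ sets $T$ inside that edge, and $x\mapsto(c+x)^p$ is strictly increasing. So it remains to show that a $k$-intersecting $\cH$ with $\bigcap_{H\in\cH}H$ of size less than $k$ (a non-trivial family) has strictly smaller $\Phi_{c,t}$, uniformly in $c\ge0$.

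For the non-trivial case I would use the classical Hilton--Milner/Frankl-type fact for $k$-intersecting families: for $n$ large, a non-trivial $k$-intersecting family has $|\cH|=O(n^{r-k-1})$. This follows from the kernel/covering argument: there is a bounded-size set $K$ (size at most $r+1$ or so) such that every edge meets it in at least $k$ points but no single $k$-subset is common to all edges, which forces each edge to contain at least $k+1$ points of $K$ or to come from a bounded family. The full star has $\binom{n-k}{r-k}=\Theta(n^{r-k})$ edges. Degrees in $\cH$ are then also small: for $|T|=t$, one has $d_{\cH}(T)\le |\cH|$, and more precisely $d_{\cH}(T)=O(n^{r-t-1})$ when $t\ge k$ and $O(n^{r-k-1})$ when $t<k$. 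In the star, by contrast, $d(T)=\binom{n-t-k+|T\cap[k]|}{r-t-k+|T\cap[k]|}$ for the $T$ with $|T\cup[k]|\le r$.

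The comparison uses $c$-uniform convexity. I would write $\Phi_{c,t}(\cH)-\binom nt c^p=\sum_T\big[(c+d(T))^p-c^p\big]$ and use that $f_c(x)=(c+x)^p-c^p$ is convex with $f_c(0)=0$, hence superadditive, and $f_c(x)/x$ is nondecreasing. Grouping terms, $\sum_T f_c(d_{\cH}(T))\le \big(\sum_T d_{\cH}(T)\big)\cdot \max_T f_c(d_{\cH}(T))/d_{\cH}(T)\le \binom rt|\cH|\, f_c(D)/D$ with $D=\max_T d_{\cH}(T)$. For the star, I keep only the sets $T$ achieving its top degree $D^*$, which are the $T\subseteq[k]$ when $t\le k$, or the $T\supseteq[k]$ otherwise. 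Their number is $M$ and this gives at least $M f_c(D^*)$.

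Since $f_c(x)/x$ is nondecreasing and $D\le D^*$, it suffices to check $\binom rt|\cH|< M D^*$. When $t\le k$ we have $M\ge1$, $D^*=\Theta(n^{r-k})$ and $|\cH|=O(n^{r-k-1})$. When $t>k$ we have $M D^*=\binom{n-k}{t-k}\binom{n-t}{r-t}=\Theta(n^{r-k})$ against $O(n^{r-k-1})$. Both hold for large $n$, independently of $c$. We also need $D\le D^*$. This holds once $D=O(n^{r-k-1})$ is compared with $D^*=\Theta(n^{r-t})$ for $t>k$; the bound $d_{\cH}(T)\le\binom{n-t}{r-t}$ would otherwise fall short. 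Since $D^*$ is exactly the maximum possible degree $\binom{n-t}{r-t}$ when $t\ge k$, the trivial bound $D\le D^*$ suffices there.

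The main obstacle is the structural bound $|\cH|=O(n^{r-k-1})$ for non-trivial $k$-intersecting families. I would prove it via a Frankl-style argument: pick $A,B\in\cH$ with $|A\cap B|$ minimal. Every edge meets $A\cup B$ in at least $k$ points in a way that intersects both. Non-triviality forbids all edges from containing a common $k$-set, so some edge misses a point of each candidate core, which forces an extra point from a bounded set. Alternatively, I would simply cite the Ahlswede--Khachatrian complete intersection theorem or Frankl's Hilton--Milner analogue for $k$-intersecting families, which gives this bound directly.
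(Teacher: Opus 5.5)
Your proof is correct and is essentially the paper's argument: you split into trivial and non-trivial families, use $|\cH|=O(n^{r-k-1})$ for non-trivial ones, and compare $\binom rt|\cH|$ times the per-unit-degree increment against the star's top-degree $t$-sets, uniformly in $c$. Your monotonicity of $f_c(x)/x$ replaces the paper's mean-value estimates, and your bound $d_{\cH}(T)\le|\cH|$ for $t<k$ replaces its application of EKR to links, which is a slight simplification.

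One side remark is false but unused: $d_{\cH}(T)=O(n^{r-t-1})$ does not hold for $t\ge k$. For example, with $k=1$, $t=2$, in a Hilton--Milner family the pair formed by the centre and a point of the exceptional edge has degree $\Theta(n^{r-2})$. For $t\ge k$ the trivial bound $D\le\binom{n-t}{r-t}=D^*$, which you also invoke, is all you need.
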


\begin{thm}\label{thm:ekr-concave}
Let $1\le k\le r-1$, $1\le t\le r-1$, and let $0<p<1$. For all sufficiently large $n$, if $\cH\subseteq\binom{[n]}r$ is $k$-intersecting, then
\[
   \norm{\cH}_{t,p}\le \norm{\cS_{n,r,k}}_{t,p}.
\]
Moreover, equality holds if and only if $\cH$ is isomorphic to $\cS_{n,r,k}$.
\end{thm}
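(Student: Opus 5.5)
We prove Theorem~\ref{thm:ekr-concave} by splitting into two cases according to whether $\cH$ has a common $k$-set in all its edges. In the non-star case we use a Frankl/Hilton--Milner-type size bound together with a concavity-based upper estimate on the norm. In the star case we use the strict monotonicity of $x\mapsto x^p$.

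\textbf{Case 1: all edges contain a common $k$-set $K$.} Then $\cH\subseteq\cS_K:=\{H:K\subseteq H\}$, and $d_{\cH}(T)\le d_{\cS_K}(T)$ for every $T$. Since $x\mapsto x^p$ is strictly increasing for $p>0$, we get $\norm{\cH}_{t,p}\le\norm{\cS_K}_{t,p}=\norm{\cS_{n,r,k}}_{t,p}$. If $\cH\subsetneq\cS_K$, pick a missing edge $H$ and any $t$-subset $T\subseteq H$. Then $d_{\cH}(T)<d_{\cS_K}(T)$, so the inequality is strict. This gives the equality characterization.

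\textbf{Case 2: $\bigcap$ of the edges has size less than $k$ (non-trivial $k$-intersecting).} By the Hilton--Milner-type theorem for $k$-intersecting families (Frankl, Ahlswede--Khachatrian), $|\cH|=O(n^{r-k-1})$ for large $n$. I would then bound the norm by concavity: the degrees $d_{\cH}(T)$ are supported on the $t$-shadow $\partial_t\cH$, with $\sum_T d_{\cH}(T)=\binom rt|\cH|$. By Jensen's inequality,
\[
\norm{\cH}_{t,p}\le |\partial_t\cH|^{1-p}\Bigl(\tbinom rt|\cH|\Bigr)^p .
\]

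The key is to compare this with $\norm{\cS_{n,r,k}}_{t,p}$ through orders of magnitude in $n$. For the star, $t$-sets $T$ with $|T\cap[k]|=i$ have degree $\Theta(n^{r-k-t+i})$ when $t-i\le r-k$ (and $0$ otherwise), and there are $\Theta(n^{t-i})$ of them. So the star's norm is $\Theta(n^{\max_i[(t-i)+p(r-k-t+i)]})$. With $i$ as small as allowed, i.e.\ $i=\max(0,t-(r-k))$, the exponent is $E^*=\min(t,r-k)+p\,(r-k-\min(t,r-k))$.

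For $\cH$ non-trivial, the plan is to show that its norm has a strictly smaller exponent. Here I would split the sum over $T$ by degree. $T$-sets of large degree must be contained in many edges, and a non-trivial $k$-intersecting family forces structure: every edge meets a fixed edge $F$ in at least $k$ points. Hence every edge contains one of the $\binom rk$ $k$-subsets of $F$, and $\cH$ is covered by $O(1)$ full $k$-stars $\cS_{K_j}$. Each piece $\cH_j=\cH\cap\cS_{K_j}$ is contained in a star, but non-triviality gives an edge $G_j$ missing $K_j$. Every edge of $\cH_j$ must meet $G_j\setminus K_j$ in enough points to achieve $|H\cap G_j|\ge k$. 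This costs a factor $n^{-1}$ in the number of free choices, so $\cH_j$ lies in a union of $O(1)$ stars with $(k+1)$-element centers.

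Then $\norm{\cH}_{t,p}\le\bigl(\sum_j\norm{\cH_j}_{t,p}\bigr)$ up to a constant, using subadditivity $(a+b)^p\le a^p+b^p$ for $p<1$. Each $\norm{\cH_j}_{t,p}$ is at most the norm of a full $(k+1)$-star, whose exponent is $E^*$ with $r-k$ replaced by $r-k-1$. This exponent is strictly smaller: if $t\le r-k-1$ it drops by $p$, and if $t\ge r-k$ it drops by $1$. Hence $\norm{\cH}_{t,p}=O(n^{E^*-\min(p,1)})=o(\norm{\cS_{n,r,k}}_{t,p})$, which is strictly less for large $n$ depending on $p$.

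\textbf{Main obstacle.} The delicate step is the covering claim in Case 2: that a non-trivial $k$-intersecting family is covered by $O_r(1)$ full $(k+1)$-stars. I would prove it by iterating the ``meet a fixed edge in $\ge k$ points'' argument. Fix $F\in\cH$. For each $k$-set $K\subseteq F$, pick $G\in\cH$ with $K\not\subseteq G$; this exists because otherwise $K$ would be common to all edges. Every $H\supseteq K$ in $\cH$ satisfies $|H\cap G|\ge k$. Since $|K\cap G|\le k-1$, the set $H$ contains some element of $G\setminus K$, so $H\supseteq K\cup\{x\}$ for one of at most $r$ choices of $x$. This yields at most $r\binom rk$ full $(k+1)$-stars covering $\cH$, which suffices.
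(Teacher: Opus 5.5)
Your proof is correct. Case 1 is the same as the paper's, but you handle the non-trivial case by a different route.

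\textbf{The paper's route.} It imports the Hilton--Milner/Ahlswede--Khachatrian bound $|\cH|=O(n^{r-k-1})$ as a black box. It then uses one crude estimate per regime:
\begin{itemize}
\item for $k+t\le r$, Jensen over all $\binom nt$ $t$-sets, which gains a factor $n^{-p}$;
\item for $k+t>r$, the bound $d^p\le d$, i.e.\ $\norm{\cH}_{t,p}\le\binom rt|\cH|$, which gains a factor $n^{-1}$.
\end{itemize}

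\textbf{Your route.} You prove a structural covering statement from scratch. Fix $F\in\cH$, and for each $k$-set $K\subseteq F$ pick an edge $G_K$ not containing $K$. Then every edge of $\cH$ contains some $K\cup\{x\}$ with $x\in G_K\setminus K$. Hence $\cH$ is covered by at most $r\binom rk$ full $(k+1)$-stars. Subadditivity and monotonicity of $x\mapsto x^p$ then give $\norm{\cH}_{t,p}\le r\binom rk\norm{\cS_{n,r,k+1}}_{t,p}$ directly. The exponent comparison with the $k$-star (a drop of $p$ when $t\le r-k-1$, and of $1$ when $t\ge r-k$) finishes uniformly in $t$. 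The case $k=r-1$, where the ``$(k+1)$-stars'' are single edges, is covered as well.

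\textbf{What each buys.} Your covering lemma is exactly the classical elementary proof of the size bound the paper cites, so your argument is self-contained. It also replaces the paper's two ad hoc estimates with a single norm comparison against the $(k+1)$-star. The paper's version is shorter once the cited theorem is granted.

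\textbf{Two small tidy-ups.}
\begin{itemize}
\item The Jensen-over-shadow inequality you state at the start of Case 2 is never used and can be dropped. If you did rely on it with only $|\partial_t\cH|\le\binom nt$, it would fail to give a lower-order term when $t>r-k$ and $p$ is small; that is why the paper switches to $d^p\le d$ there.
\item In $\norm{\cH}_{t,p}\le\sum_j\norm{\cH_j}_{t,p}$ no extra constant is needed. However, each $\cH_j$ is bounded by up to $r$ copies of the $(k+1)$-star norm, not one. It is cleanest to index directly over the at most $r\binom rk$ pieces $\cH\cap\cS_{K\cup\{x\}}$.
\end{itemize}
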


For the full $k$-star, the extremal value in Theorem~\ref{thm:ekr-convex} is
\begin{equation*}
\Phi_{c,t}(\cS_{n,r,k})=
\sum_{i=0}^{\min\{k,t\}}
\binom{k}{i}\binom{n-k}{t-i}
\left(c+\binom{n-k-t+i}{r-k-t+i}\right)^p,
\end{equation*}
where a binomial coefficient with a negative lower index is interpreted as zero.  Taking $c=0$ gives
\begin{equation*}
\norm{\cS_{n,r,k}}_{t,p}=
\sum_{i=0}^{\min\{k,t\}}
\binom{k}{i}\binom{n-k}{t-i}
\binom{n-k-t+i}{r-k-t+i}^p.
\end{equation*}
The exact calculation of this norm is given in the next section.
The case $p=1$ is again the ordinary edge-extremal theorem.

Finally we state the expansion path results. 
In this part, we use more \textit{hypergraph} language.
Thus, we redefine the following hypergraph.
For a fixed set $A\subseteq[n]$, write
\[
   \cH_{n,r,A}:=\{H\in\binom{[n]}r:H\cap A\ne\emptyset\},
\]
and write $\cH_{n,r,a}$ when $|A|=a$ and the choice of $A$ is irrelevant.

In the even case $\ell=2s$, the exact extremal family contains a lower-order part outside the main star.  Fix an $(s-1)$-set $A\subseteq[n]$ and a pair $Q\subseteq[n]\setminus A$, and define
\begin{equation}\label{eq:even-extremal-family}
   \cE_{n,r,s}(A,Q):=
   \{E\in\binom{[n]}r:E\cap A\ne\emptyset\}
   \cup
   \{E\in\binom{[n]\setminus A}r:Q\subseteq E\}.
\end{equation}
When $A$ and $Q$ are immaterial, we write $\cE_{n,r,s}$.  Define $N=n-s+1$ and $D_t=\binom{n-t}{r-t}$.  Then
\begin{align}\label{eq:even-extremal-value}
\norm{\cE_{n,r,s}}_{t,p}
&=
\sum_{i=1}^{\min\{s-1,t\}}
\binom{s-1}{i}\binom{N}{t-i}D_t^p\notag\\
&\quad+
\sum_{j=0}^{\min\{2,t\}}
\binom2j\binom{N-2}{t-j}
\left(
D_t-\binom{N-t}{r-t}
+\binom{N-t-2+j}{r-t-2+j}
\right)^p,
\end{align}
where a binomial coefficient with a negative lower index is interpreted as zero.

\begin{thm}\label{thm:exact-odd-path}
Let $r\ge3$, $a\ge2$, $1\le t\le r-1$, and $p>1$ be fixed.  Then, for all sufficiently large $n$, every $P_{2a+1}^r$-free family $\cH\subseteq\binom{[n]}r$ satisfies
\[
   \norm{\cH}_{t,p}\le \norm{\cH_{n,r,a}}_{t,p}.
\]
Equality holds if and only if $\cH$ is isomorphic to  $\cH_{n,r,a}$.
\end{thm}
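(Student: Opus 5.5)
The plan is the usual three steps: an asymptotic bound, then stability, then an exact cleanup. For $p>1$ the dominant part of $\norm{\cH_{n,r,a}}_{t,p}$ comes from the $t$-sets $T$ meeting $A$, each of which has degree $D_t=\binom{n-t}{r-t}$. There are roughly $a\binom{n}{t-1}$ of them, so their contribution is of order $a\binom{n}{t-1}D_t^p\asymp n^{t-1+p(r-t)}$. The $t$-sets disjoint from $A$ each have degree $\Theta(n^{r-t-1})$. Their total contribution is $\Theta(n^{t+p(r-t-1)})$, which is of lower order because $p>1$. So the first goal is the asymptotic bound
\[
\norm{\cH}_{t,p}\le (1+o(1))\,a\binom{n}{t-1}D_t^p
\]
for every $P_{2a+1}^r$-free $\cH$, together with the statement that near-equality forces $\cH$ to be close to $\cH_{n,r,a}$ in edit distance.

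\textbf{Asymptotic bound.} Call a $t$-set $T$ \emph{high} if $d_\cH(T)\ge \varepsilon n^{r-t}$. The key is to show that the high $t$-sets are contained, up to $O(n^{t-1})$ exceptions of negligible weight, in the $t$-sets meeting some $a$-set $A$.

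For $t\ge3$, consider the $t$-graph $\mathcal{G}$ of high $t$-sets. Each high $T$ extends to many edges of $\cH$, so greedy embedding shows $\mathcal{G}$ is itself $P^t_{2a+1}$-free, or at least free of a slightly longer path. Apply the Kostochka--Mubayi--Verstra\"ete bound and edge-stability to $\mathcal{G}$ in uniformity $t$. This bounds $|\mathcal{G}|$ by $(a+o(1))\binom{n}{t-1}$ and shows $\mathcal{G}$ is close to a star over an $a$-set.

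For $t=2$ and $t=1$, which cannot be handled by applying KMV to the shadow, I would use the weighted estimates sketched in the introduction. For $t=2$: build the graph of high pairs and show that, outside $O(n^{r-2})$ edges, each hyperedge contains at most $r-1$ high pairs. Double-counting then gives $\sum_T d(T)^p\le \max_T d(T)^{p-1}\cdot\sum_{T\text{ high}} d(T)+\text{low terms}$, and the right-hand side is controlled by $(r-1)|\cH|$ together with the KMV edge bound $|\cH|\le(a+o(1))\binom{n}{r-1}$. For $t=1$ the analogous estimate is that a typical edge contains essentially one high vertex.

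Summing $d(T)^p\le d(T)\cdot D_t^{p-1}$ over high sets and bounding the low sets by $\varepsilon^{p-1}$ times the trivial $\binom rt|\cH|n^{(r-t)(p-1)}$ gives the asymptotic bound. It also gives $|\cH|\ge(a-o(1))\binom{n}{r-1}$ in the near-extremal case. The edge-stability theorem of Kostochka, Mubayi and Verstra\"ete then yields an $a$-set $A$ with $|\cH\triangle \cH_{n,r,A}|=o(n^{r-1})$.

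\textbf{Exact step.} First I would clean up via a degree argument. Each $v\in A$ should have vertex degree $(1-o(1))\binom{n-1}{r-1}$; otherwise removing it loses too much norm. Next, using this richness of $A$, any edge $E$ disjoint from $A$ can be embedded into a $P^r_{2a+1}$. Link $E$ through vertices of $A$ with $a$ pairs of edges, each pair meeting at a vertex of $A$, which gives $2a+1$ edges. So every edge of $\cH$ meets $A$, and therefore $\cH\subseteq\cH_{n,r,A}$. Since $d_\cH(T)\le d_{\cH_{n,r,A}}(T)$ for every $T$ and $x\mapsto x^p$ is strictly increasing, we get $\norm{\cH}_{t,p}\le\norm{\cH_{n,r,A}}_{t,p}$, with equality only if $\cH=\cH_{n,r,A}$.

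The subtle point is the edge outside $A$ when $A$ has a few "defective" vertices. Here I would show directly that deleting a vertex of low degree from $A$ costs $\Omega(n^{t-1+p(r-t)})$. That is far more than edges outside $A$ can contribute, since a $P_{2a+1}^r$-free family with fewer than $a$ rich vertices has lower-order norm.

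\textbf{Main obstacle.} I expect the main obstacle to be the $t=2$ weighted estimate, namely proving that a typical hyperedge contains at most $r-1$ high pairs (all through a single vertex). This requires showing that two disjoint high pairs, or a high pair avoiding the star center, inside many edges would let one build $P_{2a+1}^r$ via a greedy embedding. It must be done carefully enough that the exceptional edges number only $O(n^{r-2})$.
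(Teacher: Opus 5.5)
\textbf{Verdict: genuine gap in the exact step.} Your asymptotic and stability steps follow the paper's Theorem~\ref{thm:path-asymptotic-stability}: a high $t$-shadow plus Kostochka--Mubayi--Verstra\"ete for $t\ge3$, and weighted high-pair and high-vertex counting for $t=2,1$. The problem is the exact step. You claim that once every $v\in A$ has degree $(1-o(1))\binom{n-1}{r-1}$, every edge $E$ disjoint from $A$ extends to a $P^r_{2a+1}$. This is false.

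To attach $E$ to a path running through $A$, you need some $u\in E$ with $d_\cH(\{u,v\})$ large for some $v\in A$. Near-full degree of $v$ only guarantees this for all but $o(n)$ vertices $u$, and the vertices of $E$ may all be exceptional. Here is a counterexample:
\begin{itemize}
\item start from $\cH_{n,r,A}$ and pick an $r$-set $X$ disjoint from $A$;
\item delete the $O(n^{r-2})$ star edges meeting $X$, and add $X$ as an edge.
\end{itemize}
Now $X$ is an isolated edge, so the family is still $P^r_{2a+1}$-free. Every vertex of $A$ still has degree $(1-o(1))\binom{n-1}{r-1}$, and yet $X\cap A=\emptyset$. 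So $\cH\subseteq\cH_{n,r,A}$ cannot be derived structurally; it has to come from extremality.

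Your ``subtle point'' targets the wrong defect. After stability, $|\cM|=o(n^{r-1})$, so no vertex of $A$ has low degree. The real defects are vertices outside $A$ that are weakly attached to $A$. Each one costs only about $D_t^{p-1}n^{r-2}$ in norm, which is a factor $n$ below the $n^{t-1+p(r-t)}$ scale of your vertex-deletion comparison. That comparison therefore cannot detect them.

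What is missing is a quantitative trade-off between the outside edges $\cB$ and the missing star edges $\cM=\cH_{n,r,A}\setminus\cH$. The paper gets it from the cleaning lemma, Lemma~\ref{lem:odd-defect}: if $\cB\ne\emptyset$ then $|\cB|\le\eta|\cM|$. It combines this with Lemma~\ref{lem:power-comparison}:
\begin{itemize}
\item the loss from $\cM$ is at least $\binom{r-1}{t-1}D_t^{p-1}|\cM|$;
\item the gain from $\cB$ is at most $p\binom rtD_t^{p-1}|\cB|$;
\item choosing $\eta<\binom{r-1}{t-1}/(p\binom rt)$ makes the loss exceed the gain.
\end{itemize}
Hence an extremal $\cH$ has $\cB=\emptyset$.

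You could also prove the cleaning bound along your own lines. Call $x\notin A$ strongly attached if $d_\cH(\{x,v\})$ is large for some $v\in A$, and weakly attached otherwise; let $X$ be the set of weakly attached vertices.
\begin{itemize}
\item Your greedy embedding does work for outside edges that contain a strongly attached vertex, so $\cB\subseteq\binom{X}{r}$.
\item Each $x\in X$ forces $\Omega(n^{r-2})$ missing star edges, so $|\cM|=\Omega(|X|n^{r-2})$, and in particular $|X|=o(n)$.
\item Meanwhile $|\cB|\le\ex_r(|X|,P^r_{2a+1})=O(|X|^{r-1})=o(|X|n^{r-2})$.
\end{itemize}
Some argument of this kind, together with the gain/loss comparison, has to be supplied for the proof to go through.
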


\begin{thm}\label{thm:exact-even-path}
Let $r\ge3$, $s\ge2$, $1\le t\le r-1$, and $p>1$ be fixed.  Then, for all sufficiently large $n$, every $P_{2s}^r$-free family $\cH\subseteq\binom{[n]}r$ satisfies
\[
   \norm{\cH}_{t,p}\le \norm{\cE_{n,r,s}}_{t,p}.
\]
Equality holds if and only if $\cH$ is isomorphic to $\cE_{n,r,s}(A,Q)$ for some $(s-1)$-set $A$ and some pair $Q\subseteq[n]\setminus A$.
\end{thm}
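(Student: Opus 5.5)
We follow the three-step scheme of Kostochka, Mubayi and Verstra\"ete: an asymptotic bound, then stability, then an exact cleanup. Let $\cH$ be $P_{2s}^r$-free with $\norm{\cH}_{t,p}\ge\norm{\cE_{n,r,s}}_{t,p}$. Expanding \eqref{eq:even-extremal-value}, the leading term is $\norm{\cE_{n,r,s}}_{t,p}=(1+o(1))\,\alpha_t\binom{n}{t}\binom{n}{r-t}^p$. Here $\alpha_t$ is determined by the degrees inside the star: $t$-sets meeting $A$ have degree $D_t$, while $t$-sets avoiding $A$ have degree about $\frac{s-1}{n}\binom{n}{r-t}$ when $t\le r-1$. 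So for $p>1$ the norm is dominated by the $\binom{s-1}{i}\binom{N}{t-i}$ sets with $i\ge1$, which carry full degree.

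\textbf{Step 1 (asymptotics and stability).} We show that $\norm{\cH}_{t,p}\le(1+o(1))\norm{\cH_{n,r,s-1}}_{t,p}$, and that near-equality forces $\cH$ to be close to $\cH_{n,r,A}$ with $|A|=s-1$.

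For $t\ge3$ we pass to the $t$-shadow. Call a $t$-set \emph{high} if its degree is at least $\epsilon n^{r-t}$. Low $t$-sets contribute only $o(\norm{\cE}_{t,p})$, by convexity together with $|\cH|=O(n^{r-1})$. The high $t$-sets form a $t$-graph, and we argue that it is itself $P^t_{2s}$-free once $t\ge3$: a linear path among high $t$-sets lifts greedily to a linear path in $\cH$, because high sets have many extending edges. The Kostochka--Mubayi--Verstra\"ete edge bound and its edge-stability theorem, applied to this $t$-graph, then show that it has at most $(s-1+o(1))\binom{n}{t-1}$ edges. Combining this with $d(T)\le\binom{n-t}{r-t}$ gives the asymptotic bound. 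If near-equality holds, the stability theorem yields a set $A$ of size $s-1$ such that almost all high $t$-sets meet $A$. Hence every vertex of $A$ has degree $(1-o(1))\binom{n-1}{r-1}$, and the edge-stability theorem applied to $\cH$ gives $|\cH\setminus\cH_{n,r,A}|=o(n^{r-1})$.

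For $t=2$ and $t=1$ the lifting argument fails. Here we use the weighted local estimates described in the introduction. For $t=2$, apart from $O(n^{r-2})$ exceptional edges, each edge contains at most $r-1$ high pairs, so the high pairs induce a graph whose structure a star-type count controls. For $t=1$ the high vertices must number at most $s-1$: with $s$ high vertices we could build $P_{2s}^r$ greedily. Summing $d(v)^p$ over high vertices then gives the bound, and stability follows as before.

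\textbf{Step 2 (exact structure).} Fix $A$ from Step 1, let $\cH_0$ be the set of edges of $\cH$ missing $A$, and let $\cH_1=\cH\setminus\cH_0$. The standard KMV argument applies because every vertex of $A$ has huge degree: we can route path segments through $A$ using fresh vertices, so any linear path of length $3$ in $\cH_0$, or any two disjoint edges of $\cH_0$, combines with the $s-1$ vertices of $A$ into a $P_{2s}^r$. Consequently $\cH_0$ is an intersecting family in which every pair of edges intersects, and in fact it is $\{P_2^r,$ two disjoint edges$\}$-constrained. For large $n$ this forces $\cH_0$ either to be a sunflower-like family with a common $2$-core $Q$, or to have $O(1)$ edges, so that $|\cH_0|\le\binom{n-s-1}{r-2}$. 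Now compare with $\cE_{n,r,s}$. Each edge missing from $\cH_1$ lowers $\norm{\cdot}_{t,p}$ by roughly $p\,D_t^{p-1}$ times the number of $t$-subsets meeting $A$, which is $\Omega(n^{(r-t)(p-1)})$. An edge of $\cH_0$, by contrast, raises the norm by at most about $p\,(\text{degree of a }t\text{-set avoiding }A)^{p-1}$, which is much smaller, and each such edge typically forbids many edges of $\cH_1$. A marginal-gain comparison therefore shows that $\cH_1$ must be complete. Given that, $\cH_0$ must be the full $Q$-star in $\binom{[n]\setminus A}{r}$, because among the admissible configurations for $\cH_0$ (a $2$-star versus small or $1$-intersecting-type families) the full $2$-star maximizes the added norm. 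The equality case follows from strictness in each comparison.

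\textbf{Main obstacle.} The hardest part is Step 1 for $t=2$ (and $t=1$), where no shadow-lifting argument is available. There I would prove the weighted inequality $\sum_{T\subseteq E,\,T\text{ high}} w(T)\le r-1$ per non-exceptional edge, and then apply H\"older to bound $\sum_T d(T)^p$ by the star value.
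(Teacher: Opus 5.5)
Your Step~1 for $t\ge3$ is essentially the paper's high-shadow argument (Theorem~\ref{thm:path-asymptotic-stability}), but Step~2 has a genuine gap.

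\textbf{The inference that $\cH_0$ is $2$-intersecting fails.} You argue that every vertex of $A$ has huge degree, so path segments can be routed through $A$. This is false for near-extremal families. Let $B_1,B_2$ be disjoint $r$-sets avoiding $A$. Delete from $\cH_{n,r,A}$ every edge meeting $B_1\cup B_2$, and add $B_1,B_2$. The result is $P_{2s}^r$-free, because both $B_i$ are isolated edges. Each $a\in A$ still has degree $\binom{n-1}{r-1}-O(n^{r-2})$, and $|\cB|+|\cM|=O(n^{r-2})$. Yet $\cH_0$ contains two disjoint edges. Large vertex degree does not let you leave $A$ through a prescribed vertex of an outside edge: $o(n^{r-1})$ missing star edges can remove every edge through the pairs $\{a,y\}$ with $y\in B$.

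If your claim were true, completeness of $\cH_1$ would be immediate. A linear path uses at most one edge of a $2$-intersecting family and at most $2s-2$ edges meeting $A$. So the entire exact step rests on this unproved claim.

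\textbf{What is actually available.} The usable statement is amortized, and the paper imports it from Kostochka--Mubayi--Verstra\"ete as Lemma~\ref{lem:even-cleaning}. There is a $2$-intersecting $\cB_0\subseteq\cB$ with $|\cB\setminus\cB_0|\le\eta|\cM|$ such that $\cH_{n,r,A}\cup\cB_0$ is still $P_{2s}^r$-free. One then compares $\cH$ with $\cH_{n,r,A}\cup\cB_0$:
\begin{itemize}
\item adding $\cM$ gains at least $\binom{r-1}{t-1}D_t^{p-1}|\cM|$, by Lemma~\ref{lem:power-comparison}(ii);
\item deleting $\cB\setminus\cB_0$ costs at most $p\binom rtD_t^{p-1}|\cB\setminus\cB_0|$, by Lemma~\ref{lem:power-comparison}(i);
\item choosing $\eta<\binom{r-1}{t-1}/(2p\binom rt)$ forces $\cM=\emptyset$ and $\cB=\cB_0$.
\end{itemize}

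Your marginal-gain paragraph does not replace this. For $t\ge2$ it is not true that an outside edge raises the norm by much less than a star edge. In $\cE_{n,r,s}$ the outside $t$-sets containing $Q$ have degree exactly $D_t$, so outside edges act at the same scale $D_t^{p-1}$. The phrase ``each such edge typically forbids many edges of $\cH_1$'' is precisely the content of the cleaning lemma. That lemma holds only for non-core edges, and only in the amortized form above.

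\textbf{Two further points.}

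First, the final optimization. Once $\cM=\emptyset$, the $\cB$-dependent part of the norm is $\Phi_{c_t,t}(\cB)$ on $[n]\setminus A$, with constant background $c_t=D_t-\binom{N-t}{r-t}$. So the last step needs the background EKR theorem (Theorem~\ref{thm:ekr-convex} with $k=2$), not a bound on $|\cB|$. Also, non-trivial $2$-intersecting families can have $\Theta(n^{r-3})$ edges, not $O(1)$.

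Second, Step~1 for $t=1$. The claim that there are at most $s-1$ high vertices is false for any fixed small threshold. Take $s$ vertex-disjoint full stars $\{\{v_i\}\cup S: S\in\binom{V_i}{r-1}\}$ with $|V_i|\approx n/s$. This family is $P_3^r$-free and has $s$ vertices of degree $\Omega(n^{r-1})$. The paper uses only the following:
\begin{itemize}
\item there are $O_\gamma(1)$ high vertices, hence only $O(n^{r-2})$ edges contain two of them;
\item write $\norm{\cH}_{1,p}=\sum_{E\in\cH}\sum_{v\in E}d_\cH(v)^{p-1}$;
\item reduce to the edge bound $|\cH|\le(s-1+o(1))\binom n{r-1}$ (Lemma~\ref{lem:weighted-vertex-bound}).
\end{itemize}
For $t=2$, the analogous reduction via $\sum_{E}\sum_{xy\subseteq E}d_\cH(xy)^{p-1}$ is what your ``star-type count'' leaves unspecified.
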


\subsection{Organization}

Section~\ref{sec:prelim} collects standard tools and norm calculations.  The remainder of the paper is organized according to the three extremal problems studied here.  Section~\ref{sec:matching} treats bounded matching number, proving the convex and concave ranges in parallel subsections.  Section~\ref{sec:ekr} proves the background EKR-type theorem for $p>1$ and the zero-background concave EKR theorem.  Section~\ref{sec:paths} treats expansion paths: first the auxiliary shadow, weighted-pair and weighted-vertex estimates, then the asymptotic and stability theorem, and finally the exact odd and even path results obtained from stability.

\section{Preliminaries}\label{sec:prelim}

A \emph{matching} is a collection of pairwise disjoint edges. The matching number of $\cH$ is denoted by $\nu(\cH)$. For $T\subseteq[n]$, the \emph{link} of $T$ in $\cH$ is $\cH(T):=\{H\setminus T:H\in\cH,\ T\subseteq H\}$. Thus, $d_{\cH}(T)=|\cH(T)|$.
For a graph $G$ and a set $S\subseteq V(G)$, let $e_G(S)$ be the number of edges of $G$ contained in $S$. For two families $\cF$ and $\cG$, write $\cF\triangle\cG:=(\cF\setminus\cG)\cup(\cG\setminus\cF)$ for their \emph{symmetric difference}.

An $r$-graph is \emph{$k$-intersecting} if $|E\cap F|\ge k$ for all $E,F\in\cH$. It is \emph{non-trivial} if it is not contained in a full $k$-star. The \emph{$r$-uniform linear path} $P_\ell^r$ consists of edges $E_1,\ldots,E_\ell$ such that $|E_i\cap E_{i+1}|=1$ for every $i$, and all other intersections are empty.

We shall use the Erd\H{o}s matching theorem in the following form.

\begin{thm}[Frankl~\cite{Frankl2013}]\label{thm:frankl}
Let $r,s$ be positive integers. If $n\ge(2s+1)r-s$ and $\cH\subseteq\binom{[n]}r$ satisfies $\nu(\cH)\le s$, then
\[
   |\cH|\le \binom nr-\binom{n-s}r.
\]
Moreover, equality holds only when $\cH$ is isomorphic to $\cH_{n,r,s}$.
\end{thm}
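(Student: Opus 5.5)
Since Theorem~\ref{thm:frankl} is quoted from Frankl~\cite{Frankl2013}, the paper will simply cite it; if I had to supply a proof, I would follow the shifting approach. First, I would replace $\cH$ by a \emph{shifted} family. The $(i,j)$-shifts $S_{ij}$ with $i<j$ preserve $|\cH|$ and do not increase $\nu(\cH)$, so it suffices to prove the bound for shifted $\cH$, that is, families closed under replacing an element by a smaller one not already in the edge.

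For shifted $\cH$ I would split according to the first $s$ elements. For $B\subseteq[s]$, set
\[
\cH_B=\{H\setminus B: H\in\cH,\ H\cap[s]=B\}\subseteq\binom{[n]\setminus[s]}{r-|B|}.
\]
Trivially $|\cH_B|\le\binom{n-s}{r-|B|}$ for $B\neq\emptyset$, so the whole task is to show
\[
|\cH_\emptyset|\le\sum_{\emptyset\neq B\subseteq[s]}\left(\binom{n-s}{r-|B|}-|\cH_B|\right),
\]
i.e.\ every edge avoiding $[s]$ must be paid for by missing edges meeting $[s]$. I expect the following shadow inequality to do this. If $\cG\subseteq\binom{X}{r}$ has $\nu(\cG)\le s$ and $|X|\ge(s+1)r$, then
\[
|\partial\cG|\ge|\cG|/s,
\]
where $\partial$ is the $(r-1)$-shadow. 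For shifted $\cH$, the condition $\nu(\cH)\le s$ forces $\nu(\cH_\emptyset)\le s$, and also forces the sets in $\partial\cH_\emptyset$ to be missing from the links $\cH_{\{i\}}$ for many $i\in[s]$. Indeed, if $G\setminus\{x\}\cup\{i\}$ were present together with enough disjoint edges through the other elements of $[s]$, then the room provided by $n\ge(2s+1)r-s$ would produce $s+1$ disjoint edges. Combining this with averaging over $i\in[s]$ should give $|\cH_\emptyset|\le s\cdot\min_i\left(\binom{n-s}{r-1}-|\cH_{\{i\}}|\right)$-type bounds, which suffice.

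The main obstacle is the shadow inequality $|\partial\cG|\ge|\cG|/s$ together with the precise bookkeeping showing that the shadow sets are genuinely missing from the singleton links. This is exactly where the threshold $(2s+1)r-s$ enters. I would try to prove the shadow inequality by induction on $r$ and $|X|$, splitting $\cG$ by whether the largest element of $X$ is used, and using shiftedness to bound the matching number of each part. The base case $|X|=(s+1)r$ would be handled by a random-partition/averaging argument: partition $X$ into $s+1$ blocks of size $r$. Since $\nu(\cG)\le s$, at least one block is not in $\cG$, and averaging over partitions yields the required inequality.

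For uniqueness, equality forces equality in every step: $\cH_\emptyset=\emptyset$ and $\cH_B$ complete for all $B\neq\emptyset$, so the shifted family equals $\cH_{n,r,s}$. A standard argument then shows that if some shift $S_{ij}(\cH)$ is isomorphic to $\cH_{n,r,s}$ and $\nu(\cH)\le s$, then $\cH$ itself is isomorphic to $\cH_{n,r,s}$. The reason is that any edge avoiding the cover set would, for $n$ this large, create $s+1$ disjoint edges. Undoing the shifts one at a time gives the characterization.
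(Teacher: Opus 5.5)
You are right that the paper does not prove this theorem. It is quoted from Frankl and used as a black box. As a proof in its own right, however, your sketch fails at its central step, because the inclusion you need goes the wrong way.

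In a shifted family the shadow of $\cH_\emptyset$ is never missing from the singleton links; it lies inside every one of them. Suppose $G=A\setminus\{x\}$ with $A\in\cH$ and $A\cap[s]=\emptyset$. For each $i\in[s]$ we have $i<x$ and $i\notin A$, so shiftedness gives $G\cup\{i\}\in\cH$, that is, $G\in\cH_{\{i\}}$. Hence $\partial\cH_\emptyset\subseteq\cH_{\{s\}}\subseteq\cdots\subseteq\cH_{\{1\}}$. Even granting $|\partial\cH_\emptyset|\ge|\cH_\emptyset|/s$, you only get $|\cH_\emptyset|\le s|\cH_{\{s\}}|$. That bounds $\cH_\emptyset$ by sets that are present, and says nothing about the deficits $\binom{n-s}{r-|B|}-|\cH_B|$.

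Your own justification shows the problem: $(G\setminus\{x\})\cup\{i\}$ meets $G$ in $r-1$ vertices, so it can never sit in a matching together with $G$. What $\nu(\cH)\le s$ really forbids is an edge $A\in\cH_\emptyset$ together with pairwise disjoint $G_1,\ldots,G_s\subseteq[s+1,n]\setminus A$ with $G_i\in\cH_{\{i\}}$. So the forced non-members of the links are sets \emph{disjoint} from edges of $\cH_\emptyset$, and they are forced only collectively over all $s$ links.

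In particular the $\min_i$ bound you aim for is false. For $s,r\ge2$, the shifted family $\{H\in\binom{[n]}r:1\in H\}\cup\binom{[2,sr]}r$ has $\nu\le s$ and a complete link $\cH_{\{1\}}$, yet $\cH_\emptyset=\binom{[s+1,sr]}r\ne\emptyset$. Turning the correct cross-condition into $|\cH_\emptyset|\le\sum_{B\ne\emptyset}\bigl(\binom{n-s}{r-|B|}-|\cH_B|\bigr)$ is the whole difficulty, and it is where the threshold $(2s+1)r-s$ must enter. Your proposal supplies no mechanism for it. Separately, averaging over partitions of an $(s+1)r$-set into $r$-blocks only gives the size bound $|\cG|\le\frac{s}{s+1}\binom{(s+1)r}{r}$; it does not relate $\cG$ to $\partial\cG$, so it cannot serve as the base case of a shadow inequality.

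The reason you give for undoing shifts in the uniqueness part is also wrong as stated. Suppose $S_{ij}(\cH)=\cH_{n,r,S}$ with $i\in S$ and $j\notin S$. Then $\cH=\cH_{n,r,(S\setminus\{i\})\cup\{j\}}$ is possible, and it has many edges avoiding $S$ without containing $s+1$ disjoint edges. What is true is that membership of every $r$-set in $\cH$ is determined by $S_{ij}(\cH)$, with one exception. For each $(r-1)$-set $R\subseteq[n]\setminus(S\cup\{j\})$, exactly one of $R\cup\{i\}$ and $R\cup\{j\}$ lies in $\cH$. The condition $\nu\le s$ makes the two resulting classes of sets $R$ cross-intersecting. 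The Kneser graph on these $(r-1)$-sets is connected in this range of $n$, so one class must be empty, and $\cH$ is a full $s$-star either way.
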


We also use the EKR theorem for sufficiently large $n$ together with the standard non-trivial estimate.  For the latter, see Ahlswede and Khachatrian~\cite{AhlswedeKhachatrian1996}; when $k=1$, this is the Hilton--Milner theorem~\cite{HiltonMilner1967}.

\begin{thm}[Erd\H{o}s--Ko--Rado~\cite{ErdosKoRado1961}]\label{thm:nontrivial-ekr}
Fix $1\le k\le r-1$. For all sufficiently large $n$, every $k$-intersecting family $\cH\subseteq\binom{[n]}r$ satisfies
\[
   |\cH|\le \binom{n-k}{r-k},
\]
with equality only for a full $k$-star. Moreover, if $\cH$ is not contained in any full $k$-star, then
\[
   |\cH|=O(n^{r-k-1}).
\]

\end{thm}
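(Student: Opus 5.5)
We would argue directly by splitting into the trivial and non-trivial cases; the second assertion is the heart of the matter and yields the first.

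First suppose $\cH$ is contained in a full $k$-star, that is, some $k$-set $K$ lies in every edge of $\cH$. Then $\cH$ injects into $\binom{[n]\setminus K}{r-k}$ via $H\mapsto H\setminus K$, so $|\cH|\le\binom{n-k}{r-k}$. Equality forces $\cH$ to be the whole star $\{H:K\subseteq H\}$, which is isomorphic to $\cS_{n,r,k}$.

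Next suppose $\cH$ is non-empty and not contained in any full $k$-star; we show $|\cH|=O(n^{r-k-1})$, with the constant depending only on $r$. Fix an edge $E\in\cH$. Since every $F\in\cH$ satisfies $|F\cap E|\ge k$, each edge contains some $k$-subset $K\subseteq E$. Hence
\[
|\cH|\le\sum_{K\in\binom Ek} d_{\cH}(K).
\]
Fix such a $K$. Because $\cH$ is not contained in the star of $K$, there is an edge $G\in\cH$ with $K\not\subseteq G$, so $|K\cap G|\le k-1$. Any $F\in\cH$ with $K\subseteq F$ satisfies $|F\cap G|\ge k$, so $F$ must contain some element $x\in G\setminus K$. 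Thus every edge through $K$ contains $K\cup\{x\}$ for one of at most $r$ choices of $x$, which gives
\[
d_{\cH}(K)\le r\binom{n-k-1}{r-k-1}.
\]
Summing over the $\binom rk$ choices of $K$ yields
\[
|\cH|\le\binom rk\, r\binom{n-k-1}{r-k-1}=O(n^{r-k-1}).
\]

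Finally, $\binom{n-k}{r-k}=\Theta(n^{r-k})$, so for $n$ sufficiently large the non-trivial bound is strictly smaller than $\binom{n-k}{r-k}$. Therefore every $k$-intersecting family satisfies $|\cH|\le\binom{n-k}{r-k}$, and any family attaining equality must be trivial. By the first case it is then a full $k$-star. No step here is a genuine obstacle. The only point needing care is the observation that an edge through $K$ must pick up an extra vertex of $G\setminus K$; this is what lowers the exponent by one.
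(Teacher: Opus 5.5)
Your proof is correct. The paper does not prove this statement: it cites Erd\H{o}s--Ko--Rado for the bound and defers the non-trivial estimate to Ahlswede--Khachatrian (Hilton--Milner when $k=1$).

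Your route is a self-contained counting argument. You fix $E\in\cH$, cover $\cH$ by the stars of the $k$-subsets of $E$, and use a witness $G\not\supseteq K$ to force an extra vertex of $G\setminus K$. This gives the explicit bound $|\cH|\le r\binom rk\binom{n-k-1}{r-k-1}$ for every non-trivial $k$-intersecting family, valid for all $n$. You then obtain the EKR inequality with uniqueness once $n>k+r(r-k)\binom rk$.

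The cited theorems give much more: the exact extremal non-trivial families, and the EKR bound in the sharp range $n\ge(k+1)(r-k+1)$ (Wilson, Frankl). The paper never uses that sharpness.
\begin{itemize}
\item Lemmas~\ref{lem:ekr-background-nontriv} and~\ref{lem:ekr-concave-nontriv} need only the order $n^{r-k-1}$.
\item The link estimate $d_{\cH}(T)=O(n^{r-k})$ for $t<k$ follows from your trivial/non-trivial dichotomy applied to the $(k-t)$-intersecting link $\cH(T)$. A trivial link has at most $\binom{n-k}{r-k}$ edges, and a non-trivial one has $O(n^{r-k-1})$, uniformly in $T$.
\end{itemize}
So your argument makes the paper's use of this theorem self-contained, at the cost of a cruder threshold on $n$ that the paper does not need.
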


We shall use the following asymptotic and stability theorem of Kostochka, Mubayi and Verstra\"ete on linear paths.  We apply it directly to the original hypergraph and, when the shadow uniformity is at least three, to a high shadow.

\begin{thm}[Kostochka--Mubayi--Verstra\"ete\cite{KostochkaMubayiVerstraete2015}]\label{thm:kmv-edge-stability}
Fix $r\ge3$ and $\ell\ge4$, and let $a=\left\lfloor(\ell-1)/2\right\rfloor$.
As $n\to\infty$,
\[
   \ex_r(n,P_\ell^r)=(a+o(1))\binom{n}{r-1}.
\]
Moreover, if a sequence of $P_\ell^r$-free families $\cH\subseteq\binom{[n]}r$ satisfies
\[
   |\cH|\ge (a-o(1))\binom{n}{r-1},
\]
then there is an $a$-set $A\subseteq[n]$ such that
\[
   |\cH\triangle \cH_{n,r,A}|=o(n^{r-1}).
\]
\end{thm}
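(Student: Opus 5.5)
The lower bound comes from the construction, the upper bound from a high-degree/low-degree split, and stability from the equality cases of the same counting. The low-degree estimate in the third step is where I expect the real difficulty.

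\emph{Lower bound.} The family $\cH_{n,r,A}$ with $|A|=a$ is $P_\ell^r$-free. In a linear path every vertex lies in at most two edges, and every edge of $\cH_{n,r,A}$ meets $A$. So a linear path inside $\cH_{n,r,A}$ has at most $2a\le \ell-1$ edges. Since $|\cH_{n,r,A}|=\binom nr-\binom{n-a}r=(a+o(1))\binom n{r-1}$, this gives $\ex_r(n,P_\ell^r)\ge(a-o(1))\binom n{r-1}$.

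\emph{Upper bound, first step: cleaning.} Fix $\varepsilon>0$ and let $\cH$ be $P_\ell^r$-free. Pass to a \emph{full} subfamily $\cH'\subseteq\cH$ by repeatedly deleting every edge that contains an $(r-1)$-set of positive codegree less than $r\ell$. Afterwards, every $(r-1)$-set in the shadow $\partial\cH'$ has codegree $0$ or at least $r\ell$. This costs at most $r\ell\,|\partial\cH|$ edges, which is only $O(n^{r-1})$. So I would instead run the argument on the edges $\cH\setminus\cH'$ separately, charging each deleted edge to a light $(r-1)$-set. Each such set is charged at most $r\ell$ times, and the goal is to show that the sets of $\partial\cH$ which are light are few.

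\emph{Upper bound, second step: high vertices.} Call a vertex \emph{high} if its degree is at least $\varepsilon n^{r-1}$, and let $B$ be the set of high vertices. Suppose $|B|\ge a+1$. In a full family each high vertex has a link of size $\Omega(n^{r-1})$, so any two high vertices can be joined by many pairs of edges sharing a single new vertex, avoiding any bounded forbidden set. Greedily threading through $a+1$ high vertices, and using each high vertex as the junction of two consecutive edges, produces a linear path with $2a+2\ge\ell$ edges, a contradiction. Hence $|B|\le a$, and the edges meeting $B$ number at most $a\binom{n-1}{r-1}$.

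\emph{Upper bound, third step: low degrees (the main obstacle).} It remains to show that the edges avoiding $B$, i.e.\ the subfamily $\cH_0$ of maximum degree below $\varepsilon n^{r-1}$, have only $O(\varepsilon)n^{r-1}+o(n^{r-1})$ edges. I would first try the following.
\begin{itemize}
\item In the full subfamily of $\cH_0$, start from any edge and extend greedily: fullness lets a path end be extended through an $(r-1)$-set of codegree at least $r\ell$.
\item Use the low maximum degree to count the extensions forced to reuse vertices. This should show that a full family avoiding $P_\ell^r$ has at most $C_\ell\,\Delta_1(\cH_0)\,n^{\,0}$-type many components of linear size.
\item Conclude $|\cH_0|=O(\varepsilon n^{r-1})+O(n^{r-2})$.
\end{itemize}
If this direct approach fails, the fallback is the Füredi delta-system lemma. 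It extracts a constant-fraction $r$-partite subfamily with homogeneous intersection patterns. Each edge in it then has a kernel, and the $P_\ell^r$-freeness forces every kernel of size at least $2$ to contribute only $O(n^{r-2})$ edges. Kernels of size $1$ are high vertices, which are already counted. Letting $\varepsilon\to0$ gives $|\cH|\le(a+o(1))\binom n{r-1}$.

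\emph{Stability.} Suppose $|\cH|\ge(a-o(1))\binom n{r-1}$. Then every inequality above is tight up to $o(n^{r-1})$. So $|B|=a$ exactly, each vertex of $B$ has degree $(1-o(1))\binom{n-1}{r-1}$, and $|\cH_0|=o(n^{r-1})$. Taking $A=B$, the edges of $\cH_{n,r,A}$ missing from $\cH$ number $o(n^{r-1})$ by the degree estimate, and the edges of $\cH\setminus\cH_{n,r,A}$ are the edges of $\cH_0$. Together these give $|\cH\triangle\cH_{n,r,A}|=o(n^{r-1})$.
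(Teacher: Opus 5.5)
The paper does not prove this statement. It is imported from Kostochka--Mubayi--Verstra\"ete and used as a black box, so your proposal has to stand on its own. Your lower bound is correct, and your final stability deduction would be essentially fine if the earlier steps held. The upper bound, however, has a genuine gap that begins in Step~1.

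With threshold $r\ell$, the full subfamily $\cH'$ is \emph{empty} for every $P_\ell^r$-free $\cH$. Given a linear path $E_1,\dots,E_i$ in $\cH'$ with $i<\ell$, pick $v\in E_i\setminus E_{i-1}$ and replace the other $r-1$ vertices of $E_i$ one at a time. Each current $(r-1)$-set lies in $\partial\cH'$, so it has codegree at least $r\ell$, while at most $\ell(r-1)<r\ell$ vertices are forbidden. This yields $E_{i+1}\in\cH'$ with $E_{i+1}\cap(E_1\cup\dots\cup E_i)=\{v\}$; it is the same extension you invoke in Step~3. So Steps~2 and~3, as applied to $\cH'$, are vacuous. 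Everything lies in $\cH\setminus\cH'$, where your charging gives only $|\cH|\le(r\ell-1)\binom n{r-1}$.

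Your hope that light $(r-1)$-sets are few fails exactly at the extremal configuration. In $\cH_{n,r,A}$, each of the $\binom{n-a}{r-1}$ sets of size $r-1$ disjoint from $A$ has codegree $a<r\ell$. Every edge meeting $A$ in one vertex contains one of them. Improving the constant from $r\ell-1$ to $a$ is the whole content of the theorem, and nothing in the proposal does that.

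Step~2 is also wrong if \emph{high} refers to $\cH$: the bound $|B|\le a$ fails. Split $[n]$ into $m$ blocks of size $n/m$, and in each block take all $r$-subsets containing a fixed centre. Each block is a star, so there is no linear path with three edges. Yet every centre has degree $(1+o(1))(n/m)^{r-1}/(r-1)!$, which exceeds $\varepsilon n^{r-1}$ for $m=a+2$ whenever $\varepsilon<1/\bigl((r-1)!\,(a+2)^{r-1}\bigr)$. High degree gives no way to join two high vertices by edges meeting in a single vertex, since their links may live on disjoint vertex sets. So the threading argument has no input, and the conclusion $|B|=a$ in your stability part is unsupported.

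Step~3 is a heuristic, not an argument, and the delta-system fallback does not rescue it. F\"uredi's lemma only gives a homogeneous subfamily of size $c(r,\ell)|\cH|$, which cannot yield the leading constant $a$ without separately controlling the discarded edges. Also, a one-element kernel there is the centre of a sunflower with a bounded number of petals, not a vertex of degree $\varepsilon n^{r-1}$.

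The missing ingredient is a sharp count of the light part of $\cH$ that pins the leading constant at $a$. That is exactly what the cited Kostochka--Mubayi--Verstra\"ete argument supplies.
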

The following is the stability ingredient used for the concave matching-number range.

\begin{lem}\label{lem:matching-stability}
Let $r,s$ be positive integers. There is a constant $C=C(r,s)$ such that the following holds. Let $\cH\subseteq\binom{[n]}r$ satisfy $\nu(\cH)\le s$, and suppose that $\cH$ is not contained in any family of the form
\[
   \{H\in\binom{[n]}r:H\cap S\ne\emptyset\},\qquad S\in\binom{[n]}s.
\]
Then there is a set $D\subseteq[n]$ with $|D|\le s-1$ such that
\[
   |\{H\in\cH:H\cap D=\emptyset\}|\le Cn^{r-2}.
\]
\end{lem}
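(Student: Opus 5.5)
The plan is a direct high-degree/low-degree decomposition; no stability theorem is needed. Fix the threshold $K:=(s+1)r+1$ and let
\[
   D:=\{v\in[n]:d_{\cH}(v)>Kn^{r-2}\}.
\]
The key observation is that a high-degree vertex can always be extended greedily by an edge avoiding any prescribed set of at most $(s+1)r$ other vertices. Indeed, if $v\in D$ and $X\subseteq[n]\setminus\{v\}$ with $|X|\le (s+1)r$, then the number of edges through $v$ that meet $X$ is at most $|X|\binom{n-2}{r-2}\le (s+1)r\,n^{r-2}$. Since $d_{\cH}(v)>Kn^{r-2}$, some edge contains $v$ and avoids $X$.

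First I will show $|D|\le s$. Suppose instead that $D$ contains distinct vertices $v_1,\dots,v_{s+1}$. Process them in order, at each step picking an edge through $v_i$ that avoids the union $X$ of all previously chosen edges together with the other $v_j$'s. Then $|X|\le sr+s\le(s+1)r$, so the observation applies at each step. This produces $s+1$ pairwise disjoint edges, contradicting $\nu(\cH)\le s$.

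Next, let $\cH':=\{H\in\cH:H\cap D=\emptyset\}$ and $m:=s-|D|$. I claim $\nu(\cH')\le m$. Suppose $M$ is a matching in $\cH'$ with $m+1$ edges. Its edges miss $D$, and its vertex set together with $D$ has at most $r(m+1)+|D|\le (s+1)r$ elements. Run the same greedy procedure through the vertices of $D$, each time avoiding $V(M)$, the other vertices of $D$, and the edges already chosen. This extends $M$ by $|D|$ further disjoint edges, giving a matching of size $s+1$ in $\cH$, a contradiction.

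Now split into the two possible values of $|D|$.
\begin{itemize}
\item If $|D|=s$, then $\nu(\cH')=0$, so $\cH'=\emptyset$. Hence every edge of $\cH$ meets $D$, and after padding $D$ arbitrarily (not needed, since $|D|=s$ already) $\cH$ is contained in $\{H:H\cap D\neq\emptyset\}$ with $D\in\binom{[n]}{s}$. This contradicts the hypothesis. Therefore $|D|\le s-1$.
\item If $|D|\le s-1$, take a maximum matching $M'$ of $\cH'$. It has at most $m\le s$ edges, and $V(M')$ meets every edge of $\cH'$. Every vertex outside $D$ has degree at most $Kn^{r-2}$, and no vertex of $V(M')$ lies in $D$. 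Therefore
\[
   |\cH'|\le \sum_{u\in V(M')}d_{\cH}(u)\le rs\,Kn^{r-2}.
\]
\end{itemize}
So $C:=rs\big((s+1)r+1\big)$ works, and it depends only on $r$ and $s$.

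The only delicate point is the bookkeeping in the greedy step: the forbidden set must always have size at most $(s+1)r$, so that the crude bound $\binom{n-2}{r-2}\le n^{r-2}$ on edges through a fixed pair beats the degree threshold. No lower bound on $n$ is required.
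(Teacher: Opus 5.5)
Your proof is correct and is essentially the paper's argument: you use the same high-degree set $D$, the same greedy augmentation through the vertices of $D$ starting from edges that avoid $D$ (you package it as $\nu(\cH')\le s-|D|$, while the paper applies it directly to an $s$-subset of $D$ together with one edge disjoint from it), and the same bounded-degree count for the edges missing $D$. There is one harmless slip: $sr+s\le(s+1)r$ fails when $s>r$, and your second greedy count omits the already chosen edges; however, in both greedy steps the forbidden set has at most $sr$ vertices, because each previously processed vertex already lies in its chosen edge, so your threshold $K=(s+1)r+1$ still works.
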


\begin{proof}
Choose a constant $M=M(r,s)$ sufficiently large, and define
\[
   D=\{v\in[n]:d_{\cH}(v)>Mn^{r-2}\}.
\]
We first show that $|D|\le s-1$. Suppose that $D$ contains an $s$-set $S=\{v_1,\ldots,v_s\}$. Since $\cH$ is not contained in the $s$-star centered at $S$, there exists $B\in\cH$ with $B\cap S=\emptyset$. We greedily choose pairwise disjoint edges $E_1,\ldots,E_s$ such that $v_i\in E_i$ and each $E_i$ is disjoint from $B$. At step $i$, the number of edges containing $v_i$ and meeting the bounded set $B\cup E_1\cup\cdots\cup E_{i-1}\cup(S\setminus\{v_i\})$ is $O(n^{r-2})$. Taking $M$ sufficiently large, an admissible choice remains. This yields $s+1$ pairwise disjoint edges, a contradiction.

Let $\cR=\{H\in\cH:H\cap D=\emptyset\}$. Every vertex outside $D$ has degree at most $Mn^{r-2}$ in $\cH$, and hence also in $\cR$. If $|\cR|>(s+1)rMn^{r-2}$, then we can greedily choose $s+1$ pairwise disjoint edges in $\cR$, again a contradiction. Hence $|\cR|=O(n^{r-2})$.
\end{proof}

We shall use the following norm calculations for the two star constructions.

\begin{lem}\label{lem:vertex-star-norm}
Let $1\le t\le r-1$, $p>0$, and let $s\ge0$ be fixed. Then
\[
\norm{\cH_{n,r,s}}_{t,p}=
\sum_{i=0}^{\min\{s,t\}}
\binom{s}{i}\binom{n-s}{t-i}
\left(\binom{n-t}{r-t}-\mathbb{1}_{\{i=0\}}\binom{n-s-t}{r-t}\right)^p .
\]
If $0<p<1$, then
\[
\norm{\cH_{n,r,s}}_{t,p}=
\left(\frac{s^p}{t!(r-t-1)!^p}+o(1)\right)n^{t+p(r-t-1)}.
\]
\end{lem}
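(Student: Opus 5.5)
The plan is to classify $t$-sets $T$ by $i=|T\cap[s]|$, compute $d_{\cH_{n,r,s}}(T)$ for each class, and sum. For the asymptotic formula we then show that only one class contributes at the leading order.

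\emph{Exact formula.} Fix $T\in\binom{[n]}t$ with $|T\cap[s]|=i$. There are exactly $\binom si\binom{n-s}{t-i}$ such sets. The $r$-sets containing $T$ number $\binom{n-t}{r-t}$.
\begin{itemize}
\item If $i\ge1$, every such $r$-set already meets $[s]$, so $d(T)=\binom{n-t}{r-t}$.
\item If $i=0$, we must remove the $r$-sets $H\supseteq T$ with $H\cap[s]=\emptyset$. Their completions $H\setminus T$ range over $\binom{[n]\setminus([s]\cup T)}{r-t}$, a ground set of size $n-s-t$. Hence $d(T)=\binom{n-t}{r-t}-\binom{n-s-t}{r-t}$.
\end{itemize}
Summing $d(T)^p$ over $i=0,\dots,\min\{s,t\}$ gives the stated identity.

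\emph{Asymptotics for $0<p<1$.} Here the degree of a class matters more than its size.
\begin{itemize}
\item For $i=0$, the count is $\binom{n-s}{t}=(1+o(1))n^t/t!$. The degree is a difference of binomials, which we expand by the telescoping identity
\[
\binom{n-t}{r-t}-\binom{n-s-t}{r-t}=\sum_{j=1}^{s}\binom{n-t-j}{r-t-1}=(s+o(1))\frac{n^{r-t-1}}{(r-t-1)!}.
\]
This class therefore contributes $(1+o(1))\frac{n^t}{t!}\cdot\frac{s^p n^{p(r-t-1)}}{(r-t-1)!^p}$, which is exactly the claimed main term.
\item For $i\ge1$, the count is $O(n^{t-1})$ and the degree is $O(n^{r-t})$. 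The contribution is therefore $O(n^{t-1+p(r-t)})=O(n^{t+p(r-t-1)-(1-p)})$.
\end{itemize}
Since $p<1$, the exponent $1-p$ is positive, so the $i\ge1$ terms are $o(n^{t+p(r-t-1)})$. If $s=0$, both sides vanish trivially.

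The only delicate points are the telescoping identity (a repeated application of Pascal's rule) and the observation that the concavity condition $p<1$ is exactly what makes the $i\ge1$ terms negligible. For $p>1$ they would dominate instead.
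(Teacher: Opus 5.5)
Your proposal is correct and follows the paper's proof: you classify $t$-sets by $i=|T\cap[s]|$, compute the degrees $\binom{n-t}{r-t}$ for $i\ge1$ and $\binom{n-t}{r-t}-\binom{n-s-t}{r-t}$ for $i=0$, and observe that for $0<p<1$ the $i=0$ class dominates while the $i\ge1$ classes contribute only $O(n^{t-1+p(r-t)})$. Your telescoping identity $\binom{n-t}{r-t}-\binom{n-s-t}{r-t}=\sum_{j=1}^{s}\binom{n-t-j}{r-t-1}$ simply makes explicit the degree asymptotics that the paper leaves implicit.
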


\begin{proof}
For $T\in\binom{[n]}t$, let $i=|T\cap[s]|$. There are $\binom{s}{i}\binom{n-s}{t-i}$ such $T$. If $i\ge1$, then every $r$-set containing $T$ lies in $\cH_{n,r,s}$. If $i=0$, one must exclude the $r$-sets containing $T$ and avoiding $[s]$. This gives the exact formula. The asymptotic formula for $0<p<1$ comes from the $t$-sets disjoint from $[s]$; the $t$-sets meeting $[s]$ contribute only $O(n^{t-1+p(r-t)})$, which is lower order.
\end{proof}

\begin{lem}\label{lem:k-star-norm}
Let $1\le k\le r-1$, $1\le t\le r-1$, and $p>0$. Then
\[
\norm{\cS_{n,r,k}}_{t,p}=
\sum_{i=0}^{\min\{k,t\}}
\binom{k}{i}\binom{n-k}{t-i}
\binom{n-k-t+i}{r-k-t+i}^p.
\]
If $p>1$, then
\[
\norm{\cS_{n,r,k}}_{t,p}=
\begin{cases}
\displaystyle \left(\frac{\binom{k}{t}}{(r-k)!^p}+o(1)\right)n^{p(r-k)}, & 1\le t<k,\\[1em]
\displaystyle \left(\frac{1}{(t-k)!(r-t)!^p}+o(1)\right)n^{t-k+p(r-t)}, & k\le t\le r-1.
\end{cases}
\]
If $0<p<1$, let $i_0=\max\{0,k+t-r\}$. Then
\[
\norm{\cS_{n,r,k}}_{t,p}=
\left(\frac{\binom{k}{i_0}}{(t-i_0)!(r-k-t+i_0)!^p}+o(1)\right)
 n^{t-i_0+p(r-k-t+i_0)}.
\]
\end{lem}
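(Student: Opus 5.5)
The plan is to classify each $T\in\binom{[n]}t$ by $i=|T\cap[k]|$, as in the proof of Lemma~\ref{lem:vertex-star-norm}. There are $\binom{k}{i}\binom{n-k}{t-i}$ sets $T$ with a given value of $i$. For such a $T$, an $r$-set $H\supseteq T$ lies in $\cS_{n,r,k}$ exactly when $[k]\subseteq H$. So $H$ must contain $T\cup[k]$, a set of size $t+k-i$, and then $d(T)=\binom{n-k-t+i}{r-k-t+i}$. This number is zero exactly when $r-k-t+i<0$, which matches the convention on negative lower indices. Summing $d(T)^p$ over $i$ gives the exact formula. This step is routine.

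For the asymptotics, fix $i$. The $i$-th term is of order $n^{t-i}\cdot n^{p(r-k-t+i)}$ whenever $r-k-t+i\ge0$, and it vanishes otherwise. Only $i\ge i_0=\max\{0,k+t-r\}$ contribute, and $i\le\min\{k,t\}$. Note that $i_0\le\min\{k,t\}$ because $k,t\le r-1$. So the exponent $e(i)=t-i+p(r-k-t+i)=t+p(r-k-t)+(p-1)i$ is linear in $i$ with slope $p-1$. The leading coefficient of the $i$-th term is $\binom{k}{i}/\bigl((t-i)!\,(r-k-t+i)!^p\bigr)$, obtained from $\binom{n-k}{t-i}\sim n^{t-i}/(t-i)!$ and $\binom{n-k-t+i}{r-k-t+i}\sim n^{r-k-t+i}/(r-k-t+i)!$.

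In the case $p>1$, $e(i)$ is strictly increasing, so the dominant term is $i=\min\{k,t\}$, and all other terms are smaller by a factor of at least $n^{-(p-1)}$.
\begin{itemize}
\item If $t<k$, then $i=t$, the coefficient is $\binom{k}{t}/(r-k)!^p$, and the exponent is $p(r-k)$. This term has $r-k\ge1$, so it is nonzero.
\item If $t\ge k$, then $i=k$, the coefficient is $1/\bigl((t-k)!(r-t)!^p\bigr)$, and the exponent is $t-k+p(r-t)$.
\end{itemize}
These agree with the stated forms.

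In the case $0<p<1$, $e(i)$ is strictly decreasing on the nonvanishing range, so the dominant term is $i=i_0$. This gives the stated coefficient $\binom{k}{i_0}/\bigl((t-i_0)!(r-k-t+i_0)!^p\bigr)$ and exponent $t-i_0+p(r-k-t+i_0)$. The remaining terms are smaller by a factor of $n^{-(1-p)}$, and there are finitely many of them.

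The only point requiring care is the boundary bookkeeping: terms with $r-k-t+i<0$ must be dropped before comparing exponents. If they were not, the naive monotonicity in the concave case would suggest $i=0$ even when $k+t>r$. Restricting to $i\ge i_0$ resolves this, and everything else is a direct computation.
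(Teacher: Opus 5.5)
Your proposal is correct and follows the same route as the paper. You classify $T$ by $i=|T\cap[k]|$, obtain the degree $\binom{n-k-t+i}{r-k-t+i}$, and read off the asymptotics from the exponent $t-i+p(r-k-t+i)$: the largest feasible $i$ dominates for $p>1$, and the smallest feasible $i=i_0$ dominates for $0<p<1$. Your explicit checks go slightly beyond the paper's brief proof. You verify that the dominant term is nonvanishing, since $r-\max\{k,t\}\ge1$ and $i_0\le\min\{k,t\}$, and you note that the vanishing terms with $r-k-t+i<0$ must be discarded before comparing exponents.
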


\begin{proof}
Let $K=[k]$. If $|T\cap K|=i$, then an edge of the full $k$-star containing $T$ must contain $T\cup K$, and the degree is $\binom{n-k-t+i}{r-k-t+i}$. This gives the exact formula. The asymptotics follow by comparing the exponents of the summands: for $p>1$ the largest feasible $i$ dominates, while for $0<p<1$ the smallest feasible $i$ dominates.
\end{proof}

We shall use the following elementary inequalities. If $p>1$ and $x_i\ge0$, then
\begin{equation*}
   \sum_i x_i^p\le \left(\max_i x_i\right)^{p-1}\sum_i x_i.
\end{equation*}
If $0<p<1$, then
\begin{equation}\label{eq:subadditive}
   (x+y)^p\le x^p+y^p
\end{equation}
for all $x,y\ge0$, and Jensen's inequality gives
\begin{equation}\label{eq:jensen}
   \sum_{i=1}^m x_i^p\le m^{1-p}\left(\sum_{i=1}^m x_i\right)^p.
\end{equation}

\section{Bounded matching number}\label{sec:matching}

\subsection{The convex range}\label{sec:matching-convex}

We prove Theorem~\ref{thm:matching-convex} by shifting. For $1\le x<y\le n$, let $S_{xy}$ be the usual \emph{shift}. It replaces an edge $H$ containing $y$ but not $x$ by $(H\setminus\{y\})\cup\{x\}$ if the latter edge is not already present, and otherwise keeps $H$. Shifting does not increase matching number.

\begin{lem}\label{lem:shift-norm}
Let $1\le t\le r-1$ and $p\ge1$. For every $r$-graph $\cH\subseteq\binom{[n]}r$ and every $1\le x<y\le n$,
\[
   \norm{S_{xy}(\cH)}_{t,p}\ge \norm{\cH}_{t,p}.
\]
More generally, for every $c\ge0$,
\[
   \Phi_{c,t}(S_{xy}(\cH))
   \ge
   \Phi_{c,t}(\cH).
\]
\end{lem}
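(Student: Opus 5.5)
We will compare the degree vectors of $\cH$ and $\cH'=S_{xy}(\cH)$ $t$-set by $t$-set. Since $f(z)=(c+z)^p$ is convex and increasing for $p\ge1$, $c\ge0$, it suffices to exhibit a pairing of $t$-sets under which the multiset of degrees of $\cH'$ majorizes that of $\cH$, with equal total sum. The pairing is the following. For a $t$-set $T$ with $x\in T$, $y\notin T$, let $T^*=(T\setminus\{x\})\cup\{y\}$. All other $t$-sets, those containing both or neither of $x,y$, are singletons.

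\emph{Singleton classes.} First, let $T$ contain both $x$ and $y$. A shift only moves edges containing $y$ but not $x$, and these never contain $T$. Edges containing $T$ are never created either, since a new edge contains $x$ but not $y$. Hence $d_{\cH'}(T)=d_{\cH}(T)$.

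Next, let $T$ contain neither $x$ nor $y$. If $H\supseteq T$ is replaced by $H'=(H\setminus\{y\})\cup\{x\}$, then $H'\supseteq T$ as well, and the map is injective. Hence $d_{\cH'}(T)=d_{\cH}(T)$.

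\emph{Pairs $\{T,T^*\}$.} Write $T=U\cup\{x\}$ and $T^*=U\cup\{y\}$, where $U$ avoids $x,y$. Classify the edges containing $U$ and exactly one of $x,y$ into types, according to the $(r-t)$-set $R=H\setminus(U\cup\{x,y\})$. For each such $R$, look at the two sets $A_R=U\cup R\cup\{x\}$ and $B_R=U\cup R\cup\{y\}$. Edges containing $U\cup\{x,y\}$ contribute equally to both $T$ and $T^*$ and are unchanged by the shift.

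For each $R$ there are four cases:
\begin{itemize}
\item both $A_R,B_R\in\cH$: nothing moves;
\item neither is in $\cH$: nothing moves;
\item only $A_R\in\cH$: nothing moves;
\item only $B_R\in\cH$: $B_R$ is replaced by $A_R$.
\end{itemize}
Let $\alpha=d_{\cH}(T)$ and $\beta=d_{\cH}(T^*)$, and let $m$ be the number of $R$ in the fourth case. Then
\[
d_{\cH'}(T)=\alpha+m,\qquad d_{\cH'}(T^*)=\beta-m.
\]
Moreover $\beta-m\le\alpha$. Indeed, after the shift every $R$ contributing to $T^*$ also contributes to $T$, and the common edges through $U\cup\{x,y\}$ count equally for both. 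Thus the new pair $(\alpha+m,\beta-m)$ has the same sum as $(\alpha,\beta)$, and its larger entry $\alpha+m$ satisfies
\[
\alpha+m\ge\max(\alpha,\beta),
\]
because $\alpha+m\ge\beta$ follows from $\beta-m\le\alpha$. So the new pair majorizes the old one.

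By the two-point Karamata inequality for convex $f$, we get
\[
f(\alpha+m)+f(\beta-m)\ge f(\alpha)+f(\beta).
\]
Concretely, $f$ is convex, and the pair $\{\alpha,\beta\}$ lies between $\beta-m$ and $\alpha+m$ with the same sum.

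Summing over all classes gives $\Phi_{c,t}(\cH')\ge\Phi_{c,t}(\cH)$, and taking $c=0$ gives the norm statement. The only delicate point is the bookkeeping in the pair case: checking that the common edges and the four $R$-cases are exhaustive, and that the inequality $\beta-m\le\alpha$ holds. This inequality is what yields majorization rather than a mere transfer of degree.
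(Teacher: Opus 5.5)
Your proof is correct and follows essentially the same route as the paper: you pair $U\cup\{x\}$ with $U\cup\{y\}$, split the candidate edges by the $(r-t)$-set $R$, note that the shift preserves the sum while the new pair majorizes the old, and finish with convexity of $z\mapsto(c+z)^p$ (your $\alpha,\beta,m$ are the paper's $a_0+c_0+q_0$, $b_0+c_0+q_0$, $b_0$). One wording fix: $\beta-m\le\alpha$ holds because the $R$ that still contribute to $T^*$ after the shift are exactly those in your first case, where $A_R$ was already in $\cH$, so they are counted in $\alpha$ itself; your sentence as written only gives $d_{S_{xy}(\cH)}(T^*)\le d_{S_{xy}(\cH)}(T)$, i.e.\ $\beta-m\le\alpha+m$, which is too weak for majorization.
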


\begin{proof}
The degrees of $t$-sets containing both $x,y$, or containing neither, are unchanged. It remains to compare pairs
\[
   T_x=P\cup\{x\},\qquad T_y=P\cup\{y\},
\]
where $P\in\binom{[n]\setminus\{x,y\}}{t-1}$.

For fixed $P$, compare the pairs of possible edges $P\cup\{x\}\cup R$ and $P\cup\{y\}\cup R$, where $R$ ranges over all $(r-t)$-sets disjoint from $P\cup\{x,y\}$. Let $a_0$ be the number of pairs in which only the $x$-edge appears, $b_0$ the number of pairs in which only the $y$-edge appears, and $c_0$ the number of pairs in which both appear. Let $q_0$ be the number of edges containing $P\cup\{x,y\}$. Before shifting, the two degrees are
\[
   a_0+c_0+q_0,\\ b_0+c_0+q_0.
\]
After shifting, they are
\[
   a_0+b_0+c_0+q_0,\\ c_0+q_0.
\]
The sum is unchanged and the absolute difference has increased from $|a_0-b_0|$ to $a_0+b_0$. Thus the shifted pair majorizes the original pair. Adding the same constant to both coordinates preserves majorization. Since $x\mapsto x^p$ is convex for $p\ge1$, Karamata's inequality gives the desired inequality for this pair. Summing over all $P$ proves the lemma.
\end{proof}

The background parameter $c$ records the contribution of a complete star removed during the induction.

\begin{lem}\label{lem:bad-comparison}
Let $p>1$ and $1\le q\le s$. Define
\[
   \cG_q=\{G\in\binom{[n]}r:|G\cap[qr+1]|\ge2\}.
\]
There exists $N_1=N_1(r,s,t)$ such that for all $n\ge N_1$, all $c\ge0$, and all $1\le q\le s$,
\[
   \Phi_{c,t}(\cG_q)<\Phi_{c,t}(\cH_{n,r,q}).
\]
\end{lem}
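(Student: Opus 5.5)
The plan is to compare the two degree vectors directly, via weak majorization, instead of expanding $\Phi_{c,t}$. Both $\cG_q$ and $\cH_{n,r,q}$ assign a degree to each of the same $\binom nt$ sets $T$. Order each degree vector decreasingly: $d^{H}_{(1)}\ge d^{H}_{(2)}\ge\cdots$ for $\cH_{n,r,q}$ and $d^{G}_{(1)}\ge d^{G}_{(2)}\ge\cdots$ for $\cG_q$. I will show that for $n$ large, depending only on $r,s,t$ and uniformly over $1\le q\le s$,
\[
\sum_{i\le m} d^{H}_{(i)}\ \ge\ \sum_{i\le m} d^{G}_{(i)}\qquad\text{for every } m .
\]
For each $c\ge0$ the map $x\mapsto(c+x)^p$ is increasing and strictly convex, since $p>1$. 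By the weak-majorization form of Karamata's inequality (Tomi\'c--Weyl), this gives $\Phi_{c,t}(\cG_q)\le\Phi_{c,t}(\cH_{n,r,q})$ for all $c$ at once. The argument never uses the value of $c$, which is exactly what makes the bound uniform in $c$.

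\textbf{Degree profile of $\cH_{n,r,q}$.} Every $t$-set meeting $[q]$ has full degree $D_t=\binom{n-t}{r-t}$. There are $M_H:=\binom nt-\binom{n-q}{t}\ge\binom{n-1}{t-1}$ such sets. Hence for $m\le M_H$ the partial sum of $d^H$ is $mD_t$. For $m>M_H$ the partial sum is at least $M_HD_t\ge\binom{n-1}{t-1}\binom{n-t}{r-t}=\Theta(n^{r-1})$.

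\textbf{Degree profile of $\cG_q$.} Every degree is at most $D_t$, so for $m\le M_H$ the $\cG_q$ partial sum is at most $mD_t$, the corresponding $\cH_{n,r,q}$ sum. For $m>M_H$ it suffices to bound the total degree mass of $\cG_q$:
\[
\sum_T d_{\cG_q}(T)=\binom rt|\cG_q|\le\binom rt\binom{qr+1}{2}\binom{n}{r-2}=O_{r,s}(n^{r-2}).
\]
This is eventually below $M_HD_t=\Theta(n^{r-1})$. The threshold can be taken uniform in $q\le s$, giving $N_1(r,s,t)$. The finer structure of $\cG_q$ (sets with $|T\cap[qr+1]|=1$ have degree about $qr\,n^{r-t-1}/(r-t-1)!$, and so on) is never needed.

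\textbf{Strictness.} This is the only delicate point. For a strictly convex increasing function, weak submajorization gives strict inequality unless the two sorted vectors coincide. They do not coincide here: the total sums differ, $\Theta(n^{r-1})$ versus $O(n^{r-2})$. To make strictness explicit, apply the equality case of the Karamata/Tomi\'c argument. Alternatively, prove the non-strict inequality first, then add one extra strict comparison: at the first index where the partial sums differ, the convex increasing function gains a strictly positive amount. Both routes are routine. Degenerate cases such as $t=1$, where no $t$-set meets $[qr+1]$ in two points, or $r-t-2<0$ only affect lower-order terms and need no separate treatment.
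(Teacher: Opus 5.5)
Your proof is correct, but it is organized differently from the paper's. You prove the weak submajorization $d^{\cG_q}\prec_w d^{\cH_{n,r,q}}$ and then invoke Tomi\'c--Weyl. The paper never sorts degrees. It sets $g(x)=(c+x)^p-c^p$, which is convex with $g(0)=0$, and uses the chord bound $g(x)\le x\,g(D)/D$ on $[0,D]$. This gives $\Phi_{c,t}(\cG_q)-\binom nt c^p\le \frac{g(D)}{D}\binom rt|\cG_q|$. On the other side it keeps only the $q\binom{n-q}{t-1}$ full-degree $t$-sets of $\cH_{n,r,q}$, giving at least $q\binom{n-q}{t-1}g(D)$. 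Since $g(D)>0$ cancels, the comparison reduces to $\binom rt|\cG_q|<q\binom{n-q}{t-1}D$, that is, $O(n^{r-2})$ against $\Theta(n^{r-1})$. This is independent of $c$, and the inequality is strict from the start.

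Your weak-majorization step encodes the same ``capped total mass'' principle: entries lie in $[0,D]$ and the total mass is $O(n^{r-2})$. In exchange for citing Tomi\'c--Weyl, you get a slightly more general statement. Your inequality holds for every increasing convex function of the degrees, not just $(c+x)^p$, so uniformity in $c$ is automatic. The paper's chord estimate is more elementary and avoids any equality-case analysis.

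On strictness, you need less than you invoke. Write $d^{\cG_q}\le u\prec d^{\cH_{n,r,q}}$. Since $u\prec d^{\cH_{n,r,q}}$ forces equal totals, and the totals of $d^{\cG_q}$ and $d^{\cH_{n,r,q}}$ differ, you must have $u\ne d^{\cG_q}$. Strict monotonicity of $x\mapsto(c+x)^p$ then already gives strict inequality, so strict convexity is not needed. Your vaguer ``alternative'' route via the first index where the partial sums differ can simply be dropped.
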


\begin{proof}
Let $D=\binom{n-t}{r-t}$ and let $g(x)=(c+x)^p-c^p$. The function $g$ is increasing and convex with $g(0)=0$. Hence $g(x)\le xg(D)/D$ for $0\le x\le D$. Since every $t$-degree in $\cG_q$ is at most $D$,
\[
   \Phi_{c,t}(\cG_q)-\binom nt c^p
   \le \frac{g(D)}D\sum_T d_{\cG_q}(T)
   = \frac{g(D)}D\binom rt |\cG_q|.
\]
On the other hand, every $t$-set containing exactly one vertex of $[q]$ has degree $D$ in $\cH_{n,r,q}$. Hence
\[
   \Phi_{c,t}(\cH_{n,r,q})-\binom nt c^p
   \ge q\binom{n-q}{t-1}g(D).
\]
It is therefore enough to compare orders. We have $|\cG_q|=O(n^{r-2})$, because each edge in $\cG_q$ contains at least two vertices from a fixed set. Meanwhile $q\binom{n-q}{t-1}D=\Theta(n^{r-1})$. Thus the desired inequality holds for all sufficiently large $n$. Since $q$ ranges over a finite set, a single $N_1(r,s,t)$ works for all $q$.
\end{proof}

\begin{proof}[Proof of Theorem~\ref{thm:matching-convex}]
We prove the assertion by induction on $s$, simultaneously for every $c\ge0$. Choose $N_+(r,s,t)$ recursively so that $N_+(r,s,t)\ge (2s+1)r-s$, $N_+(r,s,t)\ge N_+(r,s-1,t)+1$, $N_+(r,s,t)\ge(s+1)r$, and Lemma~\ref{lem:bad-comparison} applies for every $n\ge N_+(r,s,t)$. The existence of such a threshold follows from Lemma~\ref{lem:bad-comparison}.

The case $s=0$ is trivial. Let $s\ge1$ and assume the statement for $s-1$. By Lemma~\ref{lem:shift-norm}, an extremal family may be assumed shifted. We may also assume it is maximal subject to $\nu(\cH)\le s$.

We first show that $\{1,sr+2,sr+3,\ldots,(s+1)r\}\in\cH$. If not, shiftedness implies that no edge of $\cH$ meets $[sr+1]$ in at most one vertex. Hence $\cH\subseteq\cG_s$, contradicting Lemma~\ref{lem:bad-comparison} and extremality.

Next we show that $\{1,n-r+2,n-r+3,\ldots,n\}\in\cH$. Otherwise, by maximality, adding this edge creates a matching of size $s+1$. Thus $\cH$ contains $s$ disjoint edges disjoint from it. Since $\cH$ is shifted, these $s$ edges can be shifted into the interval $[2,sr+1]$. Together with the edge from the previous paragraph, this gives a matching of size $s+1$, a contradiction.

Since $\cH$ is shifted and contains $\{1,n-r+2,\ldots,n\}$, every $r$-set containing $1$ lies in $\cH$. Let
\[
   \cH_0=\{H\in\cH:1\notin H\}\subseteq\binom{[2,n]}r.
\]
Then $\nu(\cH_0)\le s-1$, for otherwise a matching of size $s$ in $\cH_0$ together with a suitable edge through $1$ would form a matching of size $s+1$.

For every $T\subseteq[2,n]$ with $|T|=t$,
\[
   d_{\cH}(T)=\binom{n-t-1}{r-t-1}+d_{\cH_0}(T),
\]
and the same identity holds for $\cH_{n,r,s}$ with $\cH_0$ replaced by $\cH_{n-1,r,s-1}$. The contribution of $t$-sets containing $1$ is also the same for $\cH$ and $\cH_{n,r,s}$. The induction hypothesis, applied to $\cH_0$ with background $c+\binom{n-t-1}{r-t-1}$, gives $\Phi_{c,t}(\cH)\le\Phi_{c,t}(\cH_{n,r,s})$.

If equality holds, then after shifting we obtain the shifted extremal family $\cH_{n,r,s}$. Shifting preserves the number of edges, so the original family has $|\cH_{n,r,s}|$ edges. Since $n\ge(2s+1)r-s$, Frankl's theorem implies that the original family is isomorphic to $\cH_{n,r,s}$.
\end{proof}

\subsection{The concave range}\label{sec:matching-concave}

\begin{lem}\label{lem:matching-nonstar-concave}
Let $1\le t\le r-1$ and $0<p<1$. If $\cH\subseteq\binom{[n]}r$ satisfies $\nu(\cH)\le s$ and is not contained in any $s$-star, then
\[
\norm{\cH}_{t,p}\le
\left(\frac{(s-1)^p}{t!(r-t-1)!^p}+o(1)\right)n^{t+p(r-t-1)}.
\]
\end{lem}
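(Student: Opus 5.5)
The plan is to combine Lemma~\ref{lem:matching-stability} with concavity. Lemma~\ref{lem:matching-stability} gives a set $D$ with $|D|\le s-1$ such that the remainder $\cR=\{H\in\cH:H\cap D=\emptyset\}$ has $|\cR|\le Cn^{r-2}$. Split $\cH=\cH_D\cup\cR$, where $\cH_D$ consists of the edges meeting $D$. For each $T$ we have $d_{\cH}(T)=d_{\cH_D}(T)+d_{\cR}(T)$, so by \eqref{eq:subadditive}
\[
\norm{\cH}_{t,p}\le \norm{\cH_D}_{t,p}+\norm{\cR}_{t,p}.
\]
It then suffices to show two things. First, $\norm{\cH_D}_{t,p}$ is at most the asserted main term. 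Second, $\norm{\cR}_{t,p}=o(n^{t+p(r-t-1)})$.

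For the first bound, $\cH_D\subseteq\cH_{n,r,D}$, and the degree of every $t$-set is monotone under inclusion of families. Hence
\[
\norm{\cH_D}_{t,p}\le\norm{\cH_{n,r,|D|}}_{t,p}\le \norm{\cH_{n,r,s-1}}_{t,p}.
\]
By Lemma~\ref{lem:vertex-star-norm} with $s-1$ in place of $s$, the right-hand side equals $\bigl(\frac{(s-1)^p}{t!(r-t-1)!^p}+o(1)\bigr)n^{t+p(r-t-1)}$. When $s=1$ this term is zero, since $D=\emptyset$, and the lemma still covers that case.

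For the second bound, only $t$-sets with $d_{\cR}(T)>0$ contribute. Their number is at most $m:=\binom rt|\cR|=O(n^{r-2})$, and trivially $m\le\binom nt$. Let $m=\min\{\binom rt|\cR|,\binom nt\}$. Applying \eqref{eq:jensen} over these $m$ sets, and using $\sum_T d_{\cR}(T)=\binom rt|\cR|=O(n^{r-2})$, gives
\[
\norm{\cR}_{t,p}\le m^{1-p}\,O(n^{(r-2)p}).
\]
Using $m\le\binom nt=O(n^t)$, this is $O(n^{t(1-p)+p(r-2)})=O(n^{t+p(r-t-2)})$, since $-tp+pr-2p=p(r-t-2)$. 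That is $n^{-p}$ times the main order, hence $o(n^{t+p(r-t-1)})$, as required.

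The main obstacle is this remainder estimate. One might worry that \eqref{eq:jensen} is lossy when $|\cR|$ is spread over many $t$-sets, but the computation shows the crude bound already wins by a factor $n^{-p}$. So the delicate part of the argument is entirely in Lemma~\ref{lem:matching-stability}: it supplies the reduction to at most $s-1$ high-degree vertices, which costs exactly the factor $(s-1)^p$ in place of $s^p$.
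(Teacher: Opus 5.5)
Your proposal is correct and follows essentially the same argument as the paper: Lemma~\ref{lem:matching-stability} supplies $D$, subadditivity \eqref{eq:subadditive} splits the norm, Lemma~\ref{lem:vertex-star-norm} bounds the part meeting $D$, and Jensen's inequality \eqref{eq:jensen} over at most $\binom nt$ sets shows the remainder is smaller by a factor $n^{-p}$. The only difference is cosmetic: you split $\cH$ exactly into $\cH_D$ and $\cR$ and state the monotonicity step $|D|\le s-1\Rightarrow \norm{\cH_D}_{t,p}\le\norm{\cH_{n,r,s-1}}_{t,p}$ explicitly, whereas the paper compares directly with the full star on $D$ and leaves that step implicit.
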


\begin{proof}
By Lemma~\ref{lem:matching-stability}, there exists $D\subseteq[n]$ with $|D|\le s-1$ such that, with $\cR=\{H\in\cH:H\cap D=\emptyset\}$, we have $|\cR|=O(n^{r-2})$. Let $\cA=\{H\in\binom{[n]}r:H\cap D\ne\emptyset\}$. For every $T$, $d_{\cH}(T)\le d_{\cA}(T)+d_{\cR}(T)$. By subadditivity \eqref{eq:subadditive}
\[
   \norm{\cH}_{t,p}\le \norm{\cA}_{t,p}+\norm{\cR}_{t,p}.
\]
Lemma~\ref{lem:vertex-star-norm} gives
\[
   \norm{\cA}_{t,p}\le
   \left(\frac{(s-1)^p}{t!(r-t-1)!^p}+o(1)\right)n^{t+p(r-t-1)}.
\]
Also,
\[
   \sum_T d_{\cR}(T)=\binom rt|\cR|=O(n^{r-2}).
\]
By Jensen's inequality \eqref{eq:jensen},
\[
   \norm{\cR}_{t,p}\le \binom nt^{1-p}\left(\binom rt|\cR|\right)^p
   =O(n^{t(1-p)+p(r-2)}).
\]
Since $t(1-p)+p(r-2)=t+p(r-t-1)-p$, this is lower order. The lemma follows.
\end{proof}

\begin{proof}[Proof of Theorem~\ref{thm:matching-concave}]
If $\cH$ is not contained in any $s$-star, then Lemma~\ref{lem:matching-nonstar-concave} and Lemma~\ref{lem:vertex-star-norm} give, for $n$ large,
\[
   \norm{\cH}_{t,p}<\norm{\cH_{n,r,s}}_{t,p},
\]
because $(s-1)^p<s^p$. If $\cH$ is contained in an $s$-star, then every $t$-degree of $\cH$ is at most the corresponding $t$-degree of the full $s$-star. Since $x\mapsto x^p$ is increasing, the result follows. Equality forces all edges of the full star to be present.
\end{proof}

\section{The EKR-type theorem}\label{sec:ekr}

We first prove the background form of the convex EKR theorem.  The parameter $c$ is kept throughout the proof, and the ordinary $(t,p)$-norm statement is recovered by taking $c=0$.
For a family $\cF$ on an $n$-element ground set, define
\[
   \Delta_{c,t}(\cF):=\Phi_{c,t}(\cF)-\binom nt c^p.
\]
The subtracted term is independent of $\cF$, so maximizing $\Phi_{c,t}$ is the same as maximizing $\Delta_{c,t}$.

We shall use two elementary estimates.  If $p>1$, $c\ge0$, and $0\le d\le M$, then
\begin{equation}\label{eq:background-upper-increment}
   (c+d)^p-c^p\le p(c+M)^{p-1}d,
\end{equation}
and for every $d\ge0$,
\begin{equation}\label{eq:background-lower-increment}
   (c+d)^p-c^p\ge d(c+d)^{p-1}.
\end{equation}
The first inequality follows from the mean value theorem, and the second from
$1-(1-z)^p\ge z$ with $z=d/(c+d)$.

\begin{lem}\label{lem:ekr-background-nontriv}
Fix $1\le k\le r-1$, $1\le t\le r-1$, and $p>1$.  Uniformly over all $c\ge0$, if $\cH\subseteq\binom{[n]}r$ is a non-trivial $k$-intersecting family, then
\[
   \Delta_{c,t}(\cH)=o\left(\Delta_{c,t}(\cS_{n,r,k})\right).
\]
\end{lem}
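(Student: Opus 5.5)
The approach is to compare the two sides through a single linear bound in the total degree, using the background increment estimates \eqref{eq:background-upper-increment} and \eqref{eq:background-lower-increment}. The point is that for $p>1$ the quantity $\Delta_{c,t}$ behaves like $(c+M)^{p-1}$ times the total degree mass, where $M$ is the largest $t$-degree. The factor $(c+M)^{p-1}$ for $\cH$ will be dominated by the corresponding factor for the star, uniformly in $c$. So the ratio reduces to a ratio of edge counts, and the non-trivial estimate in Theorem~\ref{thm:nontrivial-ekr} makes that ratio $O(1/n)$.

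\emph{Step 1 (the star).} Let $D^*$ be the largest $t$-degree in $\cS_{n,r,k}$ and $m^*$ the number of $t$-sets attaining it.
\begin{itemize}
\item If $t<k$, these are the $t$-subsets of $[k]$, so $m^*=\binom kt$ and $D^*=\binom{n-k}{r-k}$.
\item If $t\ge k$, these are the $t$-sets containing $[k]$, so $m^*=\binom{n-k}{t-k}$ and $D^*=\binom{n-t}{r-t}$.
\end{itemize}
In both cases $m^*D^*=\Theta(n^{r-k})$. Keeping only these $t$-sets and applying \eqref{eq:background-lower-increment} gives
\[
\Delta_{c,t}(\cS_{n,r,k})\ge m^*D^*(c+D^*)^{p-1}.
\]

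\emph{Step 2 (the non-trivial family).} Let $M$ be the maximum $t$-degree of $\cH$. We need $M\le D^*$ for large $n$.
\begin{itemize}
\item If $t\ge k$, this is trivial: $d_{\cH}(T)\le\binom{n-t}{r-t}=D^*$.
\item If $t<k$, we have $d_{\cH}(T)\le|\cH|=O(n^{r-k-1})$ by Theorem~\ref{thm:nontrivial-ekr}, which is less than $D^*=\Theta(n^{r-k})$ once $n$ is large.
\end{itemize}
Now apply \eqref{eq:background-upper-increment} to each $t$-set with this $M$ and sum, using $\sum_T d_{\cH}(T)=\binom rt|\cH|$. This gives
\[
\Delta_{c,t}(\cH)\le p(c+M)^{p-1}\binom rt|\cH|\le p\binom rt(c+D^*)^{p-1}|\cH|.
\]

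\emph{Step 3 (comparison).} Dividing the bound of Step 2 by that of Step 1, the factor $(c+D^*)^{p-1}$ cancels. This is what makes the estimate uniform in $c$. We are left with
\[
\frac{\Delta_{c,t}(\cH)}{\Delta_{c,t}(\cS_{n,r,k})}\le \frac{p\binom rt|\cH|}{m^*D^*}=\frac{O(n^{r-k-1})}{\Theta(n^{r-k})}=O(1/n),
\]
with the implied constant depending only on $r,k,t,p$. This proves the lemma.

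The only delicate point is securing the uniformity in $c$. The trap is that a crude bound $(c+M)^{p-1}$ with $M$ possibly larger than the star's top degree would not cancel against the lower bound when $c$ is small. This is exactly why Step 2 checks $M\le D^*$ separately in the two regimes $t<k$ and $t\ge k$.
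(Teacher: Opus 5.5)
Your proof is correct and follows the paper's argument. The paper makes the same split into $t\ge k$ and $t<k$, bounds $\Delta_{c,t}(\cH)$ above via \eqref{eq:background-upper-increment} with the maximum degree, bounds the star below via \eqref{eq:background-lower-increment} on its top-degree $t$-sets, and compares; the only deviation is that for $t<k$ you bound the maximum degree of $\cH$ by $|\cH|=O(n^{r-k-1})$, whereas the paper applies EKR to the $(k-t)$-intersecting links $\cH(T)$. Your bound is slightly simpler and equally valid, and it gives $M\le D^*$ exactly, so the factor $(c+D^*)^{p-1}$ cancels directly rather than up to constants.
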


\begin{proof}
By Theorem~\ref{thm:nontrivial-ekr},
\[
   |\cH|=O(n^{r-k-1}).
\]

Suppose first that $t\ge k$.  Write $D_t=\binom{n-t}{r-t}$.  Since every $t$-degree is at most $D_t$, \eqref{eq:background-upper-increment} gives
\[
\Delta_{c,t}(\cH)
\le p(c+D_t)^{p-1}\sum_T d_{\cH}(T)
=O\left((c+D_t)^{p-1}n^{r-k-1}\right).
\]
On the other hand, in the full $k$-star every $t$-set containing the centre $[k]$ has degree $D_t$, and there are $\binom{n-k}{t-k}$ such $t$-sets.  By \eqref{eq:background-lower-increment},
\[
\Delta_{c,t}(\cS_{n,r,k})
\ge \binom{n-k}{t-k}\,D_t(c+D_t)^{p-1}
=\Omega\left((c+D_t)^{p-1}n^{r-k}\right).
\]
Thus the non-trivial contribution is smaller by a factor of $n$.

Now suppose that $t<k$.  For each $T\in\binom{[n]}t$, the link $\cH(T)$ is a $(k-t)$-intersecting $(r-t)$-graph.  The EKR theorem therefore gives
$d_{\cH}(T)=O(n^{r-k})$
uniformly in $T$.  Hence, by \eqref{eq:background-upper-increment},
\[
\Delta_{c,t}(\cH)
\le O\left((c+n^{r-k})^{p-1}\right)\sum_Td_{\cH}(T)
=O\left((c+n^{r-k})^{p-1}n^{r-k-1}\right).
\]
In the full $k$-star, each of the $\binom{k}{t}$ $t$-sets contained in the centre has degree
\[
   D_k=\binom{n-k}{r-k}=\Theta(n^{r-k}).
\]
Again using \eqref{eq:background-lower-increment},
\[
\Delta_{c,t}(\cS_{n,r,k})
\ge \binom{k}{t}D_k(c+D_k)^{p-1}
=\Omega\left((c+n^{r-k})^{p-1}n^{r-k}\right).
\]
This is again larger than the non-trivial bound by a factor of $n$.  The lemma follows.
\end{proof}

\begin{proof}[Proof of Theorem~\ref{thm:ekr-convex}]
If $\cH$ is not contained in any full $k$-star, Lemma~\ref{lem:ekr-background-nontriv} shows that it has strictly smaller $\Phi_{c,t}$ than the full $k$-star for all sufficiently large $n$.
If $\cH$ is contained in a full $k$-star, then every $t$-degree of $\cH$ is at most the corresponding $t$-degree of that full $k$-star.  Since $x\mapsto(c+x)^p$ is strictly increasing, the coordinatewise comparison gives the theorem, and equality forces every edge of the full $k$-star to be present.
\end{proof}

It remains to prove the concave zero-background EKR theorem.

\begin{lem}\label{lem:ekr-concave-nontriv}
Let $0<p<1$. If $\cH\subseteq\binom{[n]}r$ is a non-trivial $k$-intersecting family, then
\[
   \norm{\cH}_{t,p}=o(\norm{\cS_{n,r,k}}_{t,p}).
\]
\end{lem}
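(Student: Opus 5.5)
We split according to whether $i_0=\max\{0,k+t-r\}$ is zero, since Lemma~\ref{lem:k-star-norm} gives the order of the star norm as $n^{t-i_0+p(r-k-t+i_0)}$. In both cases the only information we use about $\cH$ is the non-trivial part of Theorem~\ref{thm:nontrivial-ekr}: since $\cH$ is $k$-intersecting and not contained in any full $k$-star,
\[
   |\cH|=O(n^{r-k-1}),\qquad\text{hence}\qquad \sum_{T\in\binom{[n]}t} d_{\cH}(T)=\binom rt|\cH|=O(n^{r-k-1}).
\]
The task is then to turn this first-moment bound into a bound on $\sum_T d_{\cH}(T)^p$. By concavity, the $p$-th power sum is largest when the mass is spread out.

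\emph{Case $i_0=0$, i.e.\ $t\le r-k$.} Here the star norm has order $n^{t+p(r-k-t)}$. We apply Jensen's inequality \eqref{eq:jensen} over all $m=\binom nt$ $t$-sets:
\[
   \norm{\cH}_{t,p}\le \binom nt^{1-p}\Bigl(\binom rt|\cH|\Bigr)^p=O\bigl(n^{t(1-p)+p(r-k-1)}\bigr)=O\bigl(n^{t+p(r-k-t)-p}\bigr).
\]
This is smaller than the star norm by a factor $n^{p}$, exactly as in the proof of Lemma~\ref{lem:matching-nonstar-concave}.

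\emph{Case $i_0>0$, i.e.\ $t>r-k$.} Now $r-k-t+i_0=0$ and $t-i_0=r-k$, so the star norm is $\Theta(n^{r-k})$. Indeed, each of the $\binom{k}{i_0}\binom{n-k}{r-k}$ $t$-sets $T$ with $|T\cap[k]|=i_0$ that lie in an edge of the star has degree $1$. In this case the Jensen bound over all $t$-sets is too weak, because spreading over $\binom nt$ sets overshoots. Instead we use that every nonzero degree is a positive integer, so $d^p\le d$ for $0<p<1$. This gives
\[
   \norm{\cH}_{t,p}\le\sum_T d_{\cH}(T)=\binom rt|\cH|=O(n^{r-k-1}),
\]
which is $o(n^{r-k})$.

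Combining the two cases proves the lemma. The only step requiring care is recognizing that the naive Jensen bound fails when $k+t>r$. In that regime the comparison has to be made against the number of $t$-sets of positive degree, via the integrality bound $d^p\le d$, rather than against all $\binom nt$ $t$-sets. Once this split is made, both estimates are routine.
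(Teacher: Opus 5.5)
Your proof is correct and follows the paper's argument essentially verbatim. It uses the same split into $k+t\le r$ versus $k+t>r$: Jensen's inequality \eqref{eq:jensen} over all $\binom nt$ $t$-sets in the first case, and the integrality bound $d^p\le d$ in the second, with both cases driven by the non-trivial bound $|\cH|=O(n^{r-k-1})$ from Theorem~\ref{thm:nontrivial-ekr}.
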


\begin{proof}
Again $|\cH|=O(n^{r-k-1})$. Let $i_0=\max\{0,k+t-r\}$, so Lemma~\ref{lem:k-star-norm} gives
\[
   \norm{\cS_{n,r,k}}_{t,p}=\Theta\left(n^{t-i_0+p(r-k-t+i_0)}\right).
\]
If $k+t\le r$, then $i_0=0$. By Jensen's inequality,
\[
\norm{\cH}_{t,p}\le \binom nt^{1-p}\left(\binom rt|\cH|\right)^p
=O(n^{t+p(r-k-t)-p}),
\]
which is lower order than the star.

If $k+t>r$, then the star has order $n^{r-k}$. Since $0<p<1$ and $d^p\le d$ for every integer $d\ge0$,
\[
   \norm{\cH}_{t,p}\le \sum_T d_{\cH}(T)=\binom rt|\cH|=O(n^{r-k-1}),
\]
again lower order.
\end{proof}

\begin{proof}[Proof of Theorem~\ref{thm:ekr-concave}]
If $\cH$ is non-trivial, Lemma~\ref{lem:ekr-concave-nontriv} gives a strictly smaller norm for all sufficiently large $n$.  If $\cH$ is contained in a full $k$-star, then coordinatewise degree comparison gives the result because $x\mapsto x^p$ is strictly increasing.  Equality forces every edge of the full $k$-star to be present.
\end{proof}

\section{Expansion paths}\label{sec:paths}

\subsection{Auxiliary statements}\label{sec:path-aux}

This section contains the auxiliary tools used in the proof of Theorem~\ref{thm:path-asymptotic-stability}.  We avoid using any unproved stability statement for rich-pair or rich-vertex auxiliary graphs.  The case $t\ge3$ uses high $t$-shadows and Theorem~\ref{thm:kmv-edge-stability}; the case $t=2$ uses a weighted high-pair argument; and the case $t=1$ uses a weighted high-vertex argument.  In the exact step we use two cleaning consequences of Kostochka, Mubayi and Verstra\"ete~\cite[Section~6.4]{KostochkaMubayiVerstraete2015}.

\begin{lem}\label{lem:constant-expansion}
Fix $r\ge3$, $2\le t\le r-1$, and $\ell\ge4$.  There is a constant $C_{\rm ext}=C_{\rm ext}(r,t,\ell)$ with the following property.  Let $n$ be sufficiently large and let $\cH\subseteq\binom{[n]}r$.  Let $T_1,\ldots,T_\ell$ be the edges of a $t$-uniform linear path $P_\ell^t$.  If
\[
   d_\cH(T_i)>C_{\rm ext}n^{r-t-1}
   \qquad\text{for every }i\in[\ell],
\]
then $\cH$ contains a copy of $P_\ell^r$ whose $i$th edge contains $T_i$.
\end{lem}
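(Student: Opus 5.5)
We build the expansion greedily, choosing for each $i=1,\ldots,\ell$ a set $R_i$ of size $r-t$ with $E_i=T_i\cup R_i\in\cH$. The path $T_1,\ldots,T_\ell$ already has the right intersection pattern: consecutive $T_i$ share exactly one vertex, and non-consecutive ones are disjoint. So it suffices to make the new parts $R_i$ pairwise disjoint and disjoint from $U:=T_1\cup\cdots\cup T_\ell$. Then $E_i\cap E_j=T_i\cap T_j$ for all $i\ne j$, and the edges $E_1,\ldots,E_\ell$ form a copy of $P_\ell^r$ whose $i$th edge contains $T_i$.

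Step $i$ runs as follows. Let $B_i=U\cup R_1\cup\cdots\cup R_{i-1}$ be the forbidden set, with $|B_i|\le |U|+(\ell-1)(r-t)\le \ell r=:b$. We need an edge $H\in\cH$ with $T_i\subseteq H$ and $(H\setminus T_i)\cap B_i=\emptyset$; we then set $R_i=H\setminus T_i$. The edges containing $T_i$ that meet $B_i\setminus T_i$ each contain $T_i\cup\{v\}$ for some $v\in B_i\setminus T_i$. There are at most $b$ choices of $v$, and for each one at most $\binom{n-t-1}{r-t-1}\le n^{r-t-1}$ edges contain $T_i\cup\{v\}$. Hence at most $bn^{r-t-1}$ edges through $T_i$ are bad.

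Taking $C_{\rm ext}=\ell r$ (or anything at least $b$), the hypothesis $d_\cH(T_i)>C_{\rm ext}n^{r-t-1}$ leaves at least one good edge, so the choice succeeds at every step. Here $r-t-1\ge0$ because $t\le r-1$. When $t=r-1$, each $T_i\cup\{v\}$ is itself an $r$-set, so the count of bad edges is at most $b$, which is consistent with the same bound.

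This is an entirely routine greedy argument, and there is no real obstacle. The one point that needs care is the bookkeeping: checking that disjointness of the $R_i$ from each other and from $U$ really preserves the prescribed intersections. It does, because $(T_i\cup R_i)\cap(T_j\cup R_j)=T_i\cap T_j$ whenever all four cross terms vanish. The hypothesis $t\ge2$ plays no role here beyond making $P^t_\ell$ meaningful.
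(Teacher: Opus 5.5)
Your proposal is correct and follows essentially the same greedy argument as the paper. You pick $E_i\supseteq T_i$ one at a time, discard the extensions that pass through one of the $O(1)$ already-used vertices (at most $\binom{n-t-1}{r-t-1}$ for each such vertex), and your rule of keeping each $R_i$ disjoint from $U$ and from the earlier $R_j$ makes explicit, with $C_{\rm ext}=\ell r$, the intersection bookkeeping that the paper leaves implicit.
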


\begin{proof}
Let $U=\bigcup_{i=1}^\ell T_i$.  We choose edges $E_i\in\cH$ with $T_i\subseteq E_i$ one at a time.  Suppose that $E_1,\ldots,E_{i-1}$ have already been chosen and satisfy the required linear intersection pattern.  Let $W$ be the set of all vertices that have already been used, together with the vertices of $U$.  Since $r,t,\ell$ are fixed, $|W|=O_{r,t,\ell}(1)$.

An extension $E\supseteq T_i$ is forbidden if it contains a vertex of $W\setminus T_i$ which would create an intersection not prescribed by the path, or if it creates any other unintended intersection with a previously chosen edge.  For each specified vertex $v\notin T_i$, the number of $r$-sets containing $T_i\cup\{v\}$ is
\[
   \binom{n-t-1}{r-t-1}=O(n^{r-t-1}).
\]
There are only $O_{r,t,\ell}(1)$ specified vertices and finitely many further forbidden intersection patterns, all contributing at most the same order.  Hence the total number of forbidden extensions is at most $C_{\rm ext}n^{r-t-1}$ for a constant depending only on $r,t,\ell$.  Since $d_\cH(T_i)>C_{\rm ext}n^{r-t-1}$, an admissible extension remains.  Repeating this for all $i$ gives a linear $r$-path.
\end{proof}
We need the following lemma in graph case.
\begin{lem}\label{lem:graph-local-high-pairs}
Fix integers $r\ge3$ and $\ell\ge4$.  Let $G$ be an $n$-vertex graph containing no ordinary path with $\ell$ edges.  Then the number of sets $S\in\binom{V(G)}r$ such that
\[
   e_G(S)>r-1
\]
is $O(n^{r-2})$, where the constant depends only on $r$ and $\ell$.
\end{lem}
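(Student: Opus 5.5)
The plan is to reduce the statement to counting short cycles. If $S\in\binom{V(G)}r$ satisfies $e_G(S)>r-1$, then $G[S]$ has at least $|S|$ edges, so it is not a forest and contains a cycle $C$ of some length $m$ with $3\le m\le r$. Such a set $S$ is determined by $V(C)$ together with the remaining $r-m$ vertices. Hence the number of such $S$ is at most
\[
\sum_{m=3}^{r} c_m(G)\, n^{r-m},
\]
where $c_m(G)$ is the number of $m$-cycles of $G$. It therefore suffices to prove $c_m(G)=O_{r,\ell}(n^{m-2})$ for each $3\le m\le r$. One cannot expect a bound of order $n$: $K_{2,n-2}$ is $P_\ell$-free for $\ell\ge4$ (its longest path has $4$ edges) and contains $\Theta(n^2)$ four-cycles. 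So the bound $n^{m-2}$ is essentially the truth, and the argument must allow a few high-degree vertices.

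Fix a large constant $K=K(\ell)$. Call a vertex \emph{high} if its degree is at least $K$ and \emph{low} otherwise. The structural input I would prove first is a \emph{key claim}: every connected component of a $P_\ell$-free graph contains at most $h_0=h_0(\ell)$ high vertices.

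To prove it, suppose a component contains many high vertices. Take two high vertices $x,y$ joined by a shortest path inside the component. Because both have degree at least $K\gg\ell$, one can attach private low or unused neighbours at each end and greedily splice in further high vertices. Each high vertex has enough fresh neighbours to avoid all previously used vertices, so the path keeps growing until it has $\ell$ edges. The delicate point is that a shortest path between two high vertices may itself be long; but then that path alone already has at least $\ell$ edges, so this case is immediate. I expect this splicing argument to be the main obstacle: one must organise it so that the path really grows, for instance by working in a BFS tree of the component, or by using Erdős–Gallai to bound the component's diameter by $\ell$ and then arguing on the tree. If a clean proof of the claim is hard, a weaker version suffices: a bounded number of high vertices within bounded distance of any fixed vertex.

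Given the claim, count $m$-cycles $v_1v_2\cdots v_m$ according to the set of high vertices on them.

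\emph{No high vertex on $C$.} Choose $v_1$ in at most $n$ ways. Each subsequent vertex is a neighbour of a low vertex, so there are fewer than $K$ choices. This gives $c_m\le nK^{m}$, which is $O(n^{m-2})$ since $m\ge3$.

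\emph{At least one high vertex on $C$.} Split $C$ into maximal runs of low vertices separated by high vertices, and let $j$ be the number of high vertices on $C$. All of them lie in the component of $C$, so by the claim, once one vertex of $C$ has been chosen they are determined up to $h_0^{j}$ choices. Each low run is determined, up to $K^{m}$ choices, by its first vertex. That first vertex is a free choice (up to $n$ options) only when it follows a high vertex. The number of low runs is at most $\min\{j,m-j\}$. Therefore
\[
c_m=O\!\left(n^{\max(1,\min\{j,m-j\})}\right)=O\!\left(n^{\max(1,\lfloor m/2\rfloor)}\right),
\]
where the $1$ accounts for choosing the component when there is no free low run.

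It remains to check that $\max(1,\lfloor m/2\rfloor)\le m-2$ for every $m\ge3$. For $m=3$ the left side is $1=m-2$; for $m\ge4$ we have $\lfloor m/2\rfloor\le m-2$. Summing over the finitely many values of $m\le r$ and over the patterns of high vertices gives $c_m=O(n^{m-2})$. Hence the number of sets $S$ is
\[
\sum_{m=3}^{r} O(n^{m-2})\,n^{r-m}=O(n^{r-2}),
\]
with constants depending only on $r$ and $\ell$.
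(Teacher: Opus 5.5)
\textbf{There is a genuine gap: your key claim is false.} Your reduction to cycles is fine. If $e_G(S)\ge r=|S|$, then $G[S]$ contains a cycle of length $3\le m\le r$, so $c_m(G)=O(n^{m-2})$ would suffice. But the claim that every component of a $P_\ell$-free graph has at most $h_0(\ell)$ high vertices fails for every $\ell\ge5$ and every choice of $K$. Let $v$ have neighbours $u_1,\dots,u_q$, and give each $u_i$ its own $K$ pendant leaves. This tree has radius $2$, so every path in it has at most $4$ edges, and it is $P_\ell$-free for all $\ell\ge5$. Yet this single component contains $q$ vertices of degree $K+1$, with $q$ as large as about $n/(K+1)$. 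All of them are adjacent to $v$, so your fallback (boundedly many high vertices within bounded distance of a fixed vertex) fails too. The splicing idea also cannot work, because any path meeting two branches must pass through the single hub $v$. Hence, when $C$ contains a high vertex, its high vertices are not determined up to $h_0^j$ choices, and your estimate for $c_m$ is unsupported. (A side slip: $K_{2,n-2}$ contains a path with $4$ edges, so it is $P_\ell$-free only for $\ell\ge5$.)

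\textbf{How to repair it.} The bound $c_m=O(n^{m-2})$ is true, but the right structural input is degeneracy, not locality of high vertices. By Erd\H{o}s--Gallai, every subgraph of $G$ on $n'$ vertices has at most $(\ell-1)n'/2$ edges. So $G$ is $(\ell-1)$-degenerate and has at most $\binom{\ell-1}{2}n$ triangles, while $e(G)=O(n)$ gives $O(n^2)$ pairs of disjoint edges. Every cycle of length $m\ge4$ contains two disjoint edges, so $c_m=O(n^2\cdot n^{m-4})$ for $m\ge4$ and $c_3=O(n)$, with no high/low analysis at all.

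The paper does exactly this but skips cycles entirely. A graph on $r$ vertices with more than $r-1$ edges contains a triangle or two disjoint edges, since a triangle-free graph with no two disjoint edges is a star. One then counts $r$-sets containing either configuration.

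If you prefer to keep your run-counting, replace the high/low split by an acyclic orientation with all out-degrees at most $\ell-1$. Every cycle is then determined, up to a factor depending only on $m$ and $\ell$, by its local sources. Since sources and sinks alternate, there are between $1$ and $\lfloor m/2\rfloor$ of them, which gives precisely your exponent $\max(1,\lfloor m/2\rfloor)$.
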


\begin{proof}
By the Erd\H{o}s--Gallai theorem, every $P_\ell$-free graph has at most $(\ell-1)n/2$ edges.  Then, every subgraph of $G$ has a vertex with degree at most $\ell-1$.  
Thus, $G$ is $(\ell-1)$-degenerate. 

Moreover, the number of triangles in a $d$-degenerate graph is at most $O_d(n)$: orient every edge from earlier to later in a degeneracy ordering; each vertex has at most $d$ out-neighbours, so each triangle is counted at its earliest vertex and there are at most $\binom d2 n$ such choices.  Hence $G$ contains $O(n)$ triangles.  Also $e(G)=O(n)$, so the number of unordered pairs of disjoint edges in $G$ is $O(n^2)$.

If a graph on an $r$-set $S$ has more than $r-1$ edges, then it contains either a triangle or two vertex-disjoint edges.  Indeed, a triangle-free graph with no two disjoint edges has all its edges incident with one common vertex, and therefore has at most $r-1$ edges on $r$ vertices.  Therefore every $S$ counted in the lemma contains a triangle or a copy of $2K_2$.  The number of $r$-sets containing a triangle is
\[
   O(n)\binom n{r-3}=O(n^{r-2}),
\]
and, for $r\ge4$, the number of $r$-sets containing two disjoint edges is
\[
   O(n^2)\binom n{r-4}=O(n^{r-2});
\]
for $r=3$ this latter case is impossible.  This proves the lemma.
\end{proof}

\begin{lem}\label{lem:weighted-pair-bound}
Fix $r\ge3$, $\ell\ge4$, and $p>1$. 
Let $D_2=\binom{n-2}{r-2}$.  For every fixed $\gamma>0$, if $\cH\subseteq\binom{[n]}r$ is $P_\ell^r$-free, then for sufficiently large $n$,
\[
   \norm{\cH}_{2,p}
   \le
   \left(r-1+\binom r2\gamma^{p-1}\right)D_2^{p-1}|\cH|
   +o(nD_2^p).
\]
\end{lem}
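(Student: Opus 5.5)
The plan is to split the pairs $T\in\binom{[n]}2$ into \emph{high} pairs, with $d_\cH(T)>\gamma D_2$, and \emph{low} pairs. The high pairs will be controlled by the graph lemma, and the low pairs by convexity.

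\textbf{Low pairs.} For a low pair, $d_\cH(T)^p\le(\gamma D_2)^{p-1}d_\cH(T)$. Summing over low pairs and using $\sum_T d_\cH(T)=\binom r2|\cH|$ gives a total of at most $\binom r2\gamma^{p-1}D_2^{p-1}|\cH|$. This is the second term of the stated bound.

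\textbf{High pairs: the graph $G$.} Let $G$ be the graph of high pairs. For large $n$ we have $\gamma D_2>C_{\rm ext}n^{r-3}$. So if $G$ contained an ordinary path with $\ell$ edges, which is a $P_\ell^2$, Lemma~\ref{lem:constant-expansion} with $t=2$ would produce a copy of $P_\ell^r$ in $\cH$. Hence $G$ is $P_\ell$-free, and Lemma~\ref{lem:graph-local-high-pairs} says that all but $O(n^{r-2})$ of the $r$-sets $S$ satisfy $e_G(S)\le r-1$.

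\textbf{High pairs: the weighted count.} For a high pair $T$, write
\[
d_\cH(T)^p\le D_2^{p-1}d_\cH(T)=D_2^{p-1}\sum_{H\in\cH,\,H\supseteq T}1.
\]
Summing over high pairs and swapping the order of summation gives
\[
\sum_{T\text{ high}}d_\cH(T)^p\le D_2^{p-1}\sum_{H\in\cH}e_G(H).
\]
Every edge $H$ with $e_G(H)\le r-1$ contributes at most $(r-1)D_2^{p-1}$. The exceptional edges number $O(n^{r-2})$, and each contributes at most $\binom r2D_2^{p-1}$. Their total is therefore $O(n^{r-2}D_2^{p-1})$. Since $D_2=\Theta(n^{r-2})$, this is $O(D_2^p)=o(nD_2^p)$.

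Adding the low-pair and high-pair bounds gives the lemma. The one step that needs care is the threshold comparison $\gamma D_2>C_{\rm ext}n^{r-3}$, which holds for fixed $\gamma$ once $n$ is large. This is what lets Lemma~\ref{lem:constant-expansion} be applied to the high pairs. Everything else is bookkeeping.
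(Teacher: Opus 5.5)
Your proof is correct and is essentially the paper's argument. The only difference is bookkeeping: the paper writes $\norm{\cH}_{2,p}=\sum_{E\in\cH}w(E)$ with $w(E)=\sum_{xy\subseteq E}d_\cH(xy)^{p-1}$ and bounds $w(E)$ edge by edge, whereas you split the pairs into high and low first. The ingredients are the same: Lemma~\ref{lem:constant-expansion} makes the high-pair graph $P_\ell$-free, Lemma~\ref{lem:graph-local-high-pairs} gives $O(n^{r-2})$ exceptional edges, and the bounds $d^{p-1}\le D_2^{p-1}$ and $d^{p-1}\le(\gamma D_2)^{p-1}$ finish the estimate.
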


\begin{proof}
Define the \emph{high-pair graph} $G_\gamma$ on $[n]$ by declaring that $xy\in E(G_\gamma)$ if and only if $d_\cH(xy)\ge \gamma \binom{n-2}{r-2}$.
According to Lemma \ref{lem:constant-expansion} $G_\gamma$ is $P_\ell$-free.
For an edge $E\in\cH$, let
\[
   w(E)=\sum_{xy\subseteq E} d_\cH(xy)^{p-1}.
\]
Then
\[
   \norm{\cH}_{2,p}=\sum_{xy}d_\cH(xy)^p=\sum_{E\in\cH}w(E).
\]
Let
\[
   \cB_\gamma=\{E\in\cH:e_{G_\gamma}(E)>r-1\}.
\]
By Lemma~\ref{lem:graph-local-high-pairs}, $|\cB_\gamma|=O(n^{r-2})$.  Since $w(E)\le\binom r2\binom{n-2}{r-2}^{p-1}$ for every $E$, the total contribution of $\cB_\gamma$ is
\[
   O\left(n^{r-2}D_2^{p-1}\right)
   =O\left(n^{p(r-2)}\right)
   =o(nD_2^p).
\]

If $E\notin\cB_\gamma$, then $E$ contains at most $r-1$ high pairs.  These contribute at most $(r-1)\binom{n-2}{r-2}^{p-1}$ to $w(E)$.  The remaining pairs have degree less than $\gamma \binom{n-2}{r-2}$, and hence contribute at most $\binom r2(\gamma \binom{n-2}{r-2})^{p-1}$.  Thus
\[
   w(E)\le \left(r-1+\binom r2\gamma^{p-1}\right)\binom{n-2}{r-2}^{p-1}
\]
for every $E\notin\cB_\gamma$.  Summing over $E\in\cH$ and adding the exceptional contribution proves the lemma.
\end{proof}

\begin{lem}\label{lem:weighted-vertex-bound}
Fix $r\ge3$, $\ell\ge4$, and $p>1$.
Let $D_1=\binom{n-1}{r-1}$.  For every fixed $0<\gamma\le1$, if $\cH\subseteq\binom{[n]}r$ is $P_\ell^r$-free, then
\[
   \norm{\cH}_{1,p}
   \le
   \left(1+(r-1)\gamma^{p-1}\right)D_1^{p-1}|\cH|
   +o(D_1^p),
\]
where the implicit $o(\cdot)$ term may depend on $r,\ell,p$ and $\gamma$.
\end{lem}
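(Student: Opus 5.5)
We mimic the proof of Lemma~\ref{lem:weighted-pair-bound}, with vertices in place of pairs. Call a vertex $v$ \emph{high} if $d_\cH(v)\ge\gamma D_1$, and let $V_\gamma$ be the set of high vertices. The first step is to show $|V_\gamma|=O_{r,\ell,\gamma}(1)$. Since $\cH$ is $P_\ell^r$-free, Theorem~\ref{thm:kmv-edge-stability} (or simply the classical bound $\ex_r(n,P_\ell^r)=O(n^{r-1})$) gives $|\cH|\le Cn^{r-1}$. Summing degrees then yields $\gamma D_1|V_\gamma|\le r|\cH|\le rCn^{r-1}$, hence $|V_\gamma|\le rC(r-1)!/\gamma+o(1)$, a constant.

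Next we use the weight decomposition
\[
\norm{\cH}_{1,p}=\sum_v d_\cH(v)^p=\sum_{E\in\cH}w(E),\qquad w(E)=\sum_{v\in E}d_\cH(v)^{p-1}.
\]
Let $\cB_\gamma=\{E\in\cH:|E\cap V_\gamma|\ge2\}$. Since $V_\gamma$ has constant size, $|\cB_\gamma|\le\binom{|V_\gamma|}{2}\binom{n}{r-2}=O(n^{r-2})$. Every $w(E)$ is at most $rD_1^{p-1}$, so $\cB_\gamma$ contributes $O(n^{r-2}D_1^{p-1})=O(n^{(r-1)p-1})=o(D_1^p)$.

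For $E\notin\cB_\gamma$, the edge $E$ contains at most one high vertex, which contributes at most $D_1^{p-1}$. Each of the remaining at most $r-1$ vertices has degree below $\gamma D_1$, so together they contribute at most $(r-1)\gamma^{p-1}D_1^{p-1}$. Therefore $w(E)\le(1+(r-1)\gamma^{p-1})D_1^{p-1}$. Summing over $E\in\cH\setminus\cB_\gamma$, bounding that sum by the full sum over $\cH$, and adding the exceptional contribution gives the claimed inequality.

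The only step that needs care is bounding $|V_\gamma|$ by a constant. This rests on the linear-in-$n^{r-1}$ edge bound for $P_\ell^r$-free families, which Theorem~\ref{thm:kmv-edge-stability} supplies; no analogue of Lemma~\ref{lem:graph-local-high-pairs} is required here. The implicit constants depend on $\gamma$, which is consistent with the statement.
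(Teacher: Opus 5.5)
Your proof is correct and is essentially the paper's argument: you bound the number of high vertices by a constant using the $O(n^{r-1})$ edge bound from Theorem~\ref{thm:kmv-edge-stability}, write $\norm{\cH}_{1,p}=\sum_{E\in\cH}w(E)$, discard the $O(n^{r-2})$ edges containing two high vertices, and bound $w(E)\le(1+(r-1)\gamma^{p-1})D_1^{p-1}$ for the remaining edges. One small point to make explicit is that an edge with no high vertex has $r$ low vertices, not $r-1$; this case is still covered, either because any single vertex contributes at most $D_1^{p-1}$, or, as the paper argues, because $\gamma\le1$ gives $\gamma^{p-1}\le 1$.
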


\begin{proof}
Let
\[
   U_\gamma=\{v\in[n]:d_\cH(v)\ge \gamma D_1\}.
\]
By Theorem~\ref{thm:kmv-edge-stability}, $|\cH|=O(n^{r-1})$. Hence
\[
   \gamma D_1 |U_\gamma|
   \le \sum_{v\in U_\gamma}d_\cH(v)
   \le r|\cH|,
\]
so $|U_\gamma|=O_\gamma(1)$.

For $E\in\cH$, define
\[
   w(E)=\sum_{v\in E}d_\cH(v)^{p-1}.
\]
Then
\[
   \norm{\cH}_{1,p}=\sum_{v\in[n]}d_\cH(v)^p=\sum_{E\in\cH}w(E).
\]
Let $\cB_\gamma$ be the set of edges containing at least two vertices of $U_\gamma$. Since $|U_\gamma|=O_\gamma(1)$,
\[
   |\cB_\gamma|\le \binom{|U_\gamma|}{2}\binom n{r-2}+O(n^{r-3})=O_\gamma(n^{r-2}).
\]
Also $w(E)\le rD_1^{p-1}$ for every $E$, and therefore the total contribution of $\cB_\gamma$ is
\[
   O_\gamma\left(n^{r-2}D_1^{p-1}\right)=O_\gamma(n^{p(r-1)-1}).
\]

If $E\notin\cB_\gamma$, then $E$ contains at most one vertex of $U_\gamma$. A possible high vertex contributes at most $D_1^{p-1}$ to $w(E)$, while all remaining vertices have degree less than $\gamma D_1$. Since $0<\gamma\le1$, this gives
\[
   w(E)\le \left(1+(r-1)\gamma^{p-1}\right)D_1^{p-1}.
\]
Summing over $E\in\cH\setminus\cB_\gamma$ and adding the exceptional contribution proves the lemma.
\end{proof}

\begin{lem}\label{lem:power-comparison}
Let $p>1$ and $D>0$.
\begin{enumerate}[label=\textup{(\roman*)}]
\item If $0\le x\le x+y\le D$, then
\[
   (x+y)^p-x^p\le pD^{p-1}y.
\]
\item If $0\le m\le D$, then
\[
   D^p-(D-m)^p\ge D^{p-1}m.
\]
\end{enumerate}
\end{lem}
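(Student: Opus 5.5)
Both parts are elementary one-variable inequalities for the convex function $f(z)=z^p$ with $p>1$, whose derivative $f'(z)=pz^{p-1}$ is nonnegative and nondecreasing on $[0,\infty)$. The plan is to apply the mean value theorem in (i) and a direct convexity or monotonicity argument in (ii).

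For (i), if $y=0$ there is nothing to prove. Otherwise the mean value theorem gives some $\xi\in(x,x+y)$ with
\[
   (x+y)^p-x^p=p\xi^{p-1}y .
\]
Since $\xi\le x+y\le D$ and $z\mapsto z^{p-1}$ is nondecreasing on $[0,\infty)$ for $p>1$, we get $p\xi^{p-1}y\le pD^{p-1}y$. This is the same reasoning as in \eqref{eq:background-upper-increment}.

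For (ii), write $D^p-(D-m)^p=\int_{D-m}^{D}pz^{p-1}\,dz$. Bounding the integrand from below by its value at the left endpoint only gives $p(D-m)^{p-1}m$, which is too weak when $m$ is close to $D$. The cleaner route is the scaling argument already used for \eqref{eq:background-lower-increment}. Set $z=m/D\in[0,1]$ and divide by $D^p$; the claim becomes
\[
   1-(1-z)^p\ge z .
\]
Since $0\le 1-z\le 1$ and $p>1$, we have $(1-z)^p\le 1-z$, and this rearranges to exactly the required inequality. Multiplying back by $D^p$ gives $D^p-(D-m)^p\ge D^{p-1}m$. The boundary case $m=D$ gives equality $D^p\ge D^p$, which is consistent.

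There is no real obstacle here. The only point needing care is in (ii): the integral or mean value bound alone is insufficient, and one must use $(1-z)^p\le 1-z$ on $[0,1]$. Both statements are then immediate.
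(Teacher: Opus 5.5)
Your proposal is correct and essentially matches the paper's proof. Part (ii) is identical: you substitute $z=m/D$ and use $(1-z)^p\le 1-z$. In part (i) you use the mean value theorem where the paper writes the difference as $\int_x^{x+y}pu^{p-1}\,du$ and bounds the integrand by $pD^{p-1}$, which is the same argument.
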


\begin{proof}
For (i),
\[
   (x+y)^p-x^p=\int_x^{x+y}pu^{p-1}\,du\le pD^{p-1}y.
\]
For (ii), let $z=m/D$.  Since $0\le z\le1$ and $p>1$, we have $(1-z)^p\le1-z$.  Hence
\[
   D^p-(D-m)^p=D^p\left(1-(1-z)^p\right)\ge D^p z=D^{p-1}m.
\]
\end{proof}

The following two lemmas are the stability-to-exact ingredients for expansion paths.  They are the forms of the cleaning step in the exact proof of Kostochka, Mubayi and Verstra\"ete~\cite[Section~6.4]{KostochkaMubayiVerstraete2015} needed for the norm comparison below.

\begin{lem}[Kostochka--Mubayi--Verstra\"ete~\cite{KostochkaMubayiVerstraete2015}]\label{lem:odd-defect}
Let $r\ge3$ and $a\ge2$.  For every $\eta>0$ there are $\alpha>0$ and $n_0$ such that the following holds for all $n\ge n_0$.  Let $A\in\binom{[n]}a$ and let $\cH\subseteq\binom{[n]}r$ be $P_{2a+1}^r$-free.  Define
\[
   \cB=\{H\in\cH:H\cap A=\emptyset\},\qquad
   \cM=\cH_{n,r,A}\setminus\cH.
\]
If $|\cB|+|\cM|\le \alpha n^{r-1}$ and $\cB\ne\emptyset$, then
\[
   |\cB|\le \eta |\cM|.
\]
\end{lem}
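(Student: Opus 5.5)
The plan is to follow the cleaning argument of Kostochka, Mubayi and Verstra\"ete: each edge $B\in\cB$ avoiding $A$ forces many edges of the $A$-star to be missing. The mechanism is that a single bad edge $B$ can be joined to a near-complete star structure on $A$ to form a $P_{2a+1}^r$, unless many star edges are absent.

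\textbf{Step 1 (setup).} Since $|\cM|\le\alpha n^{r-1}$ with $\alpha$ small, for each $v\in A$ almost all $r$-sets through $v$ are present. We call a pair $(v,w)$ with $v\in A$ and $w\notin A$ \emph{good} if $d_{\cH}(\{v,w\})\ge \binom{n-2}{r-2}-\sqrt{\alpha}\,n^{r-2}$. Averaging then shows that all but $O(\sqrt\alpha\, n)$ vertices $w\notin A$ are good with every $v\in A$.

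\textbf{Step 2 (embedding the path).} Fix $B\in\cB$.
\begin{itemize}
\item If $B$ contains two good vertices $w,w'$, we build a $P_{2a+1}^r$ as follows. The middle edge is $B$. We then alternate star edges through the vertices of $A$ and connector vertices: $w$ is linked via edges through $v_1,\dots$ on one side, and $w'$ via the remaining $A$-vertices on the other side. This gives $2a$ star edges plus $B$, i.e.\ $2a+1$ edges.
\item Each star edge is chosen greedily, as in Lemma~\ref{lem:constant-expansion}, from the $\Theta(n^{r-2})$ available $r$-sets containing a prescribed pair $\{v,x\}$ and avoiding a bounded set. This succeeds because the pair is good.
\end{itemize}
So $P_{2a+1}^r$-freeness forces every $B\in\cB$ to contain at most one good vertex. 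I expect a structure with $a$ centres to allow exactly this count. I would double check it against the fact that $\cH_{n,r,A}$ plus one extra edge contains $P_{2a+1}^r$.

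\textbf{Step 3 (charging).} Each $B\in\cB$ therefore contains at least $r-1\ge2$ non-good vertices $u$. For each such $u$ there is some $v\in A$ such that $\{v,u\}$ misses at least $\sqrt\alpha\, n^{r-2}$ star sets, all of which lie in $\cM$.
\begin{itemize}
\item Let $X$ be the set of non-good vertices. Counting $\cM$ through pairs gives $|\cM|\gtrsim |X|\sqrt\alpha\,n^{r-2}/r$.
\item Every $B\in\cB$ has at least two vertices in $X$, so $|\cB|\le\binom{|X|}{2}\binom{n}{r-2}$.
\end{itemize}
This yields only $|\cB|\lesssim |X|^2 n^{r-2}$, which is too weak when $|X|$ is large. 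This is the main obstacle. To fix it, I would refine Step 2 rather than rely on goodness with respect to all of $A$. Call $u$ \emph{$v$-rich} if $d(\{v,u\})\ge\eta' n^{r-2}$, and note that the greedy embedding only needs enough extensions, not near-full degree.
\begin{itemize}
\item Redo the argument with this weaker notion: $P_{2a+1}^r$-freeness shows that each $B\in\cB$ has at most one vertex rich to all of $A$. Hence each $B$ contains a vertex $u$ and some $v\in A$ with $d(\{v,u\})<\eta' n^{r-2}$.
\item Such a pair misses almost all of its $\binom{n-2}{r-2}$ star sets, so it contributes $\ge\frac12\binom{n-2}{r-2}$ sets to $\cM$.
\item Each vertex $u$ lies in at most $d(u)$ sets of $\cB$. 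In the subcase where $u$ also has $\cB$-degree $\le \eta'' n^{r-2}$, we can charge $B$ to the pair $(u,v)$, and each pair $(u,v)$ then receives at most $\eta'' n^{r-2}$ edges of $\cB$ while accounting for $\Omega(n^{r-2})$ edges of $\cM$. This gives $|\cB|\le \eta|\cM|$.
\item Vertices of large $\cB$-degree are handled separately. Such a $u$ lies in $\Omega(n^{r-2})$ edges avoiding $A$, so it behaves like an extra near-centre, and I would embed a $P_{2a+1}^r$ using $u$ together with $A$. Here the small error bound $|\cB|+|\cM|\le\alpha n^{r-1}$ controls the interactions.
\end{itemize}
Choosing $\eta''\ll\eta$ and then $\alpha$ and $n_0$ accordingly should complete the proof.
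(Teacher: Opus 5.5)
\textbf{There is a genuine gap, and it starts in Step 2.} Placing $B$ in the middle of the path only gives you that each $B\in\cB$ has at most one vertex rich to all of $A$. No charging can turn that into $|\cB|\le\eta|\cM|$. For $r=3$, take $\cB=\{\{u,y,z\}:z\in Z\}$ with $|Z|=\Theta(n)$, every $z\in Z$ good, and $\cM$ equal to the roughly $2an$ star edges through the pairs $\{u,v\}$ and $\{y,v\}$, $v\in A$. Every edge of $\cB$ has exactly one good vertex, so everything you derive in Steps 1--3 holds, yet $|\cB|/|\cM|\approx 1/(2a)$.

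This configuration falls exactly into your high-$\cB$-degree case, and the remedy you propose there fails. To use $u$ as an ``extra near-centre'' you need two $\cB$-edges meeting only in $u$. However, all $\cB$-edges through $u$ may contain a common second vertex $y$. For general $r$, the $2$-star on $\{u,y\}$ outside $A$ has $\Theta(n^{r-2})$ edges and contains no linear path with two edges. High $\cB$-degree also says nothing about how $u$ attaches to the star. So the last bullet is not a proof, and the information you extract in Step 2 is not enough to reach the statement.

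The missing idea is the one your own sanity check points to. Since the $a$ centres can carry $2a$ consecutive path edges, $B$ should be the \emph{first} edge, and then a single vertex of $B$ suffices. Suppose some $w\in B$ has $d_\cH(\{w,v_1\})>Cn^{r-3}$ for even one $v_1\in A$. Choose $E_2\supseteq\{w,v_1\}$ avoiding the bounded set $(A\cup B)\setminus\{w,v_1\}$. Then choose greedily $E_3\supseteq\{v_1,x_1\}$, $E_4\supseteq\{x_1,v_2\}$, \dots, $E_{2a+1}\ni v_a$, with connectors $x_i$ good for all of $A$ (all but $O(\alpha n)$ vertices are). Then $B,E_2,\dots,E_{2a+1}$ is a $P^r_{2a+1}$.

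Hence \emph{every} vertex of $V(\cB)$ has codegree $O(n^{r-3})$ with every vertex of $A$. Fix $v\in A$ and let $m=|V(\cB)|$. Each missing star edge through $v$ is counted at most $r-1$ times, so $|\cM|\ge\frac{m}{r-1}\bigl(\binom{n-2}{r-2}-O(n^{r-3})\bigr)$; in particular $m=O(\alpha n)$. Since $\cB$ is a $P^r_{2a+1}$-free family on $m$ vertices and $2a+1\ge5$, Theorem~\ref{thm:kmv-edge-stability} gives $|\cB|=O(m^{r-1})$. Therefore $|\cB|/|\cM|=O((m/n)^{r-2})=O(\alpha^{r-2})\le\eta$ for $\alpha$ small, using $r\ge3$. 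No degree splitting or charging is needed. (The paper does not prove this lemma; it imports it from the cleaning step of Kostochka, Mubayi and Verstra\"ete.)
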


\begin{lem}[Kostochka--Mubayi--Verstra\"ete \cite{KostochkaMubayiVerstraete2015}]\label{lem:even-cleaning}
Let $r\ge3$, $s\ge2$, and $a=s-1$.  For every $\eta>0$ there are $\alpha>0$ and $n_0$ such that the following holds for all $n\ge n_0$.  Let $A\in\binom{[n]}a$ and let $\cH\subseteq\binom{[n]}r$ be $P_{2s}^r$-free.  Define
\[
   \cB=\{H\in\cH:H\cap A=\emptyset\},\qquad
   \cM=\cH_{n,r,A}\setminus\cH.
\]
If $|\cB|+|\cM|\le \alpha n^{r-1}$, then there is a $2$-intersecting subfamily $\cB_0\subseteq\cB$ such that
\[
   |\cB\setminus\cB_0|\le \eta |\cM|.
\]
Moreover, $\cH_{n,r,A}\cup\cB_0$ is $P_{2s}^r$-free.
\end{lem}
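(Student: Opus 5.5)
The plan is to show that every pair of edges of $\cB$ meeting in at most one vertex can be extended to a copy of $P_{2s}^r$ using edges of $\cH_{n,r,A}$, unless $\cM$ blocks all such extensions, and to charge the blocked pairs to $\cM$. Write $A=\{a_1,\ldots,a_{s-1}\}$.

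\emph{The template.} Take $B_1,B_2\in\cB$ with $|B_1\cap B_2|\le1$. Since $B_1,B_2$ avoid $A$, we try to build a linear path $E_1,\ldots,E_{2s}$ in one of two ways.
\begin{enumerate}[label=\textup{(\alph*)}]
\item If $|B_1\cap B_2|=1$, put $E_1=B_1$ and $E_2=B_2$. Then $E_3$ meets $B_2$ in a single fresh vertex $x\in B_2\setminus B_1$. For $1\le i\le s-1$, the pair $E_{2i+1},E_{2i+2}$ consists of two edges through $a_i$ meeting only in $a_i$, and consecutive pairs are joined through fresh vertices outside $A$.
\item If $B_1\cap B_2=\emptyset$, put $B_1=E_1$ and $B_2=E_{2s}$, and route the $2s-2$ middle edges through $a_1,\ldots,a_{s-1}$ in the same way.
\end{enumerate}
Every middle edge meets $A$, so it lies in $\cH_{n,r,A}$. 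It is therefore present in $\cH$ unless it lies in $\cM$.

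\emph{Good vertices.} Call a vertex $x\notin A$ \emph{good} if $d_{\cM}(\{a_i,x\})<\varepsilon n^{r-2}$ for every $i$, where $\varepsilon=\varepsilon(r,s,\eta)$ is small. If the end vertices $x\in B_1$ and $y\in B_2$ and all connecting vertices are good, the greedy argument of Lemma~\ref{lem:constant-expansion} completes the path. Indeed, each required edge through $\{a_i,z\}$ has about $\binom{n-2}{r-2}$ candidates. Fewer than $\varepsilon n^{r-2}$ of these are missing, and only $O(n^{r-3})$ hit previously used vertices. By double counting, the number of bad vertices is at most
\[
\beta:=(s-1)r|\cM|/(\varepsilon n^{r-2})\le C\alpha n,
\]
so good connecting vertices are plentiful when $\alpha$ is small.

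\emph{Choosing $\cB_0$.} Let $\cB_0$ be the set of edges of $\cB$ containing at least two good vertices. For $B_1,B_2\in\cB_0$ with $|B_1\cap B_2|\le1$, we can pick good vertices $x\in B_1\setminus B_2$ and $y\in B_2\setminus B_1$ with $x\ne y$. The template then yields $P_{2s}^r\subseteq\cH$, a contradiction. Hence $\cB_0$ is $2$-intersecting.

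The same argument, with $\cH$ replaced by $\cH_{n,r,A}\cup\cB_0$ (where now no middle edge is missing), shows that a copy of $P_{2s}^r$ in $\cH_{n,r,A}\cup\cB_0$ must use at least two edges of $\cB_0$. I expect a short case analysis to finish the "moreover" part. Since $A$ has only $s-1$ vertices, a $2s$-edge linear path can contain at most $2(s-1)$ edges through $A$. The edges of $\cB_0$ in such a path all avoid $A$. Two consecutive ones, or two at distance two with an $A$-edge between them, would meet in at most one vertex, contradicting $2$-intersection; it remains to check the remaining configurations explicitly.

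\emph{Main obstacle.} The hard step is bounding $|\cB\setminus\cB_0|$, the edges with at least $r-1$ bad vertices. A crude bound of $\beta^{r-1}n$ is too weak when $|\cM|$ is close to $\alpha n^{r-1}$. I would refine it in two ways.
\begin{itemize}
\item Replace vertex badness by a two-level notion, declaring sets $\{a_i\}\cup S$ deficient in $\cM$ at every level $|S|=1,\ldots,r-2$. Each edge of $\cB\setminus\cB_0$ would then be charged to $\Omega(1/\eta)$ distinct edges of $\cM$, each received only $O(1)$ times.
\item Alternatively, exploit that $\cB$ itself is $P_{2s}^r$-free after attaching $A$. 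An edge $B$ containing $r-1$ bad vertices $x_1,\ldots,x_{r-1}$ still allows a path through some $x_j$ unless the link of each $\{a_i,x_j\}$ is heavily missing. Summing over $j$ gives at least $(r-1)\varepsilon n^{r-2}$ missing edges per bad vertex, while Lemma~\ref{lem:constant-expansion}-type counting shows $\cB$ has only $O(n^{r-2})$ edges through any bad vertex, up to lower order.
\end{itemize}
Balancing these two estimates and choosing $\alpha$ small relative to $\eta$ and $\varepsilon$ should yield $|\cB\setminus\cB_0|\le\eta|\cM|$.
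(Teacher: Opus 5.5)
Your template argument is sound, and it does show that the edges of $\cB$ with at least two good vertices form a $2$-intersecting family. The ``moreover'' part needs no case analysis: any two edges of a linear path share at most one vertex. So a copy of $P_{2s}^r$ uses at most one edge of a $2$-intersecting $\cB_0$, and at most $2(s-1)$ of its edges pass through $A$. (The paper itself gives no proof of this lemma; it imports it from the cleaning step of Kostochka--Mubayi--Verstra\"ete.)

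\textbf{The gap.} The real gap is the inequality $|\cB\setminus\cB_0|\le\eta|\cM|$. You leave it open, and for $r=3$ it is false for your choice of $\cB_0$. Take $\cB=\{\{b_1,b_2,v\}:v\notin A\cup\{b_1,b_2\}\}$. Let $\cM$ consist of $\lceil\varepsilon n\rceil$ triples through $\{a_1,b_1\}$ and $\lceil\varepsilon n\rceil$ triples through $\{a_1,b_2\}$, with third vertices outside $A\cup\{b_1,b_2\}$. Then $\cH=(\cH_{n,3,A}\setminus\cM)\cup\cB\subseteq\cE_{n,3,s}(A,\{b_1,b_2\})$ is $P_{2s}^3$-free, and $|\cB|+|\cM|=O(n)$. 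However:
\begin{itemize}
\item $b_1$ and $b_2$ are bad, so every edge of $\cB$ has exactly one good vertex;
\item hence your $\cB_0$ is empty and $|\cB\setminus\cB_0|=n-s-1$;
\item but $\eta|\cM|\approx 2\eta\varepsilon n$, which is far smaller.
\end{itemize}
No charging scheme can rescue this choice of $\cB_0$. Your second heuristic also misses the needed order in general. A bound of $O(n^{r-2})$ edges per bad vertex gives $|\cB\setminus\cB_0|=O(|Z|n^{r-2})$, where $Z$ is the bad set. But $|\cM|$ is only guaranteed to be $\Omega(\varepsilon|Z|n^{r-2})$, so the ratio is of order $1/\varepsilon$, not $\eta$.

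\textbf{What is missing.} You need a count that uses $P_{2s}^r$-freeness inside $Z$, where $|Z|\le C|\cM|/(\varepsilon n^{r-2})\le C\alpha n/\varepsilon$. Every edge of $\cB\setminus\cB_0$ has at least $r-1$ vertices in $Z$.
\begin{itemize}
\item Edges lying inside $Z$, or whose $(r-1)$-set in $Z$ has codegree at most $C_{\rm ext}$, number $O(|Z|^{r-1})$.
\item By Lemma~\ref{lem:constant-expansion} with $t=r-1$, the $(r-1)$-subsets of $Z$ of codegree above $C_{\rm ext}$ in $\cB\setminus\cB_0$ span no $P_{2s}^{r-1}$. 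So there are $O(|Z|^{r-2})$ of them, each of codegree at most $n$.
\end{itemize}
Hence $|\cB\setminus\cB_0|=O(|Z|^{r-1}+|Z|^{r-2}n)=O\big(\varepsilon^{-1}(\alpha/\varepsilon)^{r-3}\big)|\cM|$.

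For $r\ge4$ this is at most $\eta|\cM|$ once $\alpha$ is small, which completes your argument in that range. For $r=3$ the factor $(\alpha/\varepsilon)^{r-3}$ equals $1$, and you must enlarge $\cB_0$:
\begin{itemize}
\item By your template, at most one pair $P\subseteq Z$ has two or more good vertices $v$ with $P\cup\{v\}\in\cB$. Two such pairs would give two edges meeting in at most one vertex, each with a good vertex outside the intersection.
\item Adding all edges $P\cup\{v\}$ with $v$ good to $\cB_0$ keeps it $2$-intersecting.
\item What remains of $\cB\setminus\cB_0$ has $O(|Z|^2)=O(\alpha\varepsilon^{-2})|\cM|$ edges.
\end{itemize}
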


\subsection{Asymptotic value and stability}\label{sec:path-asymptotic-proof}

First, we prove the asymptotic value and stability for the $(t,p)$-norm.
\begin{thm}\label{thm:path-asymptotic-stability}
   Let $r\ge3$, $\ell\ge4$, $1\le t\le r-1$, and $p>1$ be fixed.  Let $a=\left\lfloor(\ell-1)/2\right\rfloor$ and $D_t=\binom{n-t}{r-t}$.  Then
   \[
      \max\left\{\norm{\cH}_{t,p}:\cH\subseteq\binom{[n]}r\text{ is }P_\ell^r\text{-free}\right\}
      =(a+o(1))\binom n{t-1}D_t^p .
   \]
   Equivalently, if $\ell=2s+1$ is odd, the coefficient is $s$, and if $\ell=2s$ is even, the coefficient is $s-1$.
   
   Moreover, the asymptotic extremal families are stable in the edge metric.  If $\cH\subseteq\binom{[n]}r$ is $P_\ell^r$-free and
   \[
      \norm{\cH}_{t,p}\ge (a-o(1))\binom n{t-1}D_t^p,
   \]
   then there exists an $a$-set $A\subseteq[n]$ such that
   \[
      |\cH\triangle \cH_{n,r,A}|=o(n^{r-1}).
   \]
   \end{thm}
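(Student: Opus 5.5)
Lower bound: take $\cH=\cH_{n,r,A}$ with $|A|=a$, which is $P_\ell^r$-free (every edge meets $A$, and a linear path with $\ell\ge 2a+1$ edges needs more than $a$ vertices to cover it). Every $t$-set meeting $A$ in exactly one vertex has degree $D_t$. There are $a\binom{n-a}{t-1}=(a+o(1))\binom n{t-1}$ such sets, which gives the lower bound $(a+o(1))\binom n{t-1}D_t^p$. For the upper bound and stability we split by $t$, always writing the norm as an edge-weighted sum $\norm{\cH}_{t,p}=\sum_{E\in\cH}w(E)$ with $w(E)=\sum_{T\subseteq E}d_\cH(T)^{p-1}$.

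\emph{Case $t=1$.} Apply Lemma~\ref{lem:weighted-vertex-bound} together with $|\cH|\le(a+o(1))\binom n{r-1}$ from Theorem~\ref{thm:kmv-edge-stability}. This gives $\norm{\cH}_{1,p}\le(1+(r-1)\gamma^{p-1})(a+o(1))\binom n{r-1}D_1^{p-1}+o(D_1^p)$. Since $\binom n{r-1}\sim D_1$ and $\binom n0=1$, letting $\gamma\to0$ yields the upper bound.

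\emph{Case $t=2$.} Apply Lemma~\ref{lem:weighted-pair-bound}. With $|\cH|\le(a+o(1))\binom n{r-1}$ and $(r-1)\binom n{r-1}\sim nD_2$, we get $\norm{\cH}_{2,p}\le(1+O(\gamma^{p-1}))(a+o(1))nD_2^p$. Letting $\gamma\to 0$ gives the bound.

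\emph{Case $t\ge3$.} Let $\cG_\gamma\subseteq\binom{[n]}t$ be the high $t$-shadow, consisting of the $t$-sets with $d_\cH(T)\ge\gamma D_t$. By Lemma~\ref{lem:constant-expansion}, $\cG_\gamma$ is $P_\ell^t$-free. Since $t\ge3$, Theorem~\ref{thm:kmv-edge-stability} gives $|\cG_\gamma|\le(a+o(1))\binom n{t-1}$. Split the norm as
\[
\norm{\cH}_{t,p}\le |\cG_\gamma|D_t^p+(\gamma D_t)^{p-1}\binom rt|\cH|.
\]
The second term is $O(\gamma^{p-1}n^{r-1}D_t^{p-1})=O(\gamma^{p-1})\binom n{t-1}D_t^p$. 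Letting $\gamma\to0$ gives the asymptotic value.

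\textbf{Stability.} Suppose $\norm{\cH}_{t,p}\ge(a-o(1))\binom n{t-1}D_t^p$. Reading the above inequalities backwards, we need $|\cH|\ge(a-o(1))\binom n{r-1}$. In cases $t=1,2$ this is immediate, because the bound is linear in $|\cH|$ up to a factor $1+O(\gamma^{p-1})$ and an $o(\cdot)$ error, with $\gamma$ taken to tend to zero slowly. Then Theorem~\ref{thm:kmv-edge-stability} applied to $\cH$ gives the $a$-set $A$.

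For $t\ge3$ the main obstacle is that the split above only forces $|\cG_\gamma|$ to be near-extremal, not $|\cH|$. To handle it I would use a sharper form of the split: write $\norm{\cH}_{t,p}\le D_t^{p-1}\sum_{T\in\cG_\gamma}d_\cH(T)+O(\gamma^{p-1})\binom n{t-1}D_t^p$. Then use $\sum_T d_\cH(T)=\binom rt|\cH|$ and $\binom rt\binom n{r-1}\sim\binom n{t-1}D_t\cdot\frac{r-t+1}{t}\cdot$(a constant). These constants need checking. If they do not match exactly, I would instead apply stability to $\cG_\gamma$. That yields an $a$-set $A$ such that almost all $t$-sets meeting $A$ in one vertex have degree close to $D_t$ (using the lower bound on the norm). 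Hence almost all $r$-sets meeting $A$ are in $\cH$, so $|\cH\cap\cH_{n,r,A}|\ge(a-o(1))\binom n{r-1}$. Then $|\cH|$ is near-extremal, and Theorem~\ref{thm:kmv-edge-stability} (whose extremal set must be $A$) gives $|\cH\triangle\cH_{n,r,A}|=o(n^{r-1})$.
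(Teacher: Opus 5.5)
Your proposal is correct and follows the paper's proof. The cases $t=1,2$ are identical to the paper's. For $t\ge3$, your fallback is exactly the paper's argument: apply Theorem~\ref{thm:kmv-edge-stability} to the high $t$-shadow, use Lemma~\ref{lem:power-comparison}(ii) to make the total degree deficit on $t$-sets meeting $A$ equal to $o(n^{r-1})$, deduce $|\cH_{n,r,A}\setminus\cH|=o(n^{r-1})$, and finish with Theorem~\ref{thm:kmv-edge-stability} for $\cH$ itself.

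Two small points. Your tentative ``sharper split'' cannot work, because $\binom rt\binom n{r-1}\sim\frac{r}{t}\binom n{t-1}D_t$ only gives $|\cH|\ge(\frac{t}{r}a-o(1))\binom n{r-1}$, so the fallback is genuinely needed. Also, with the threshold $\gamma D_t$ you must let $\gamma\to0$ slowly, keeping $\gamma n\to\infty$ so Lemma~\ref{lem:constant-expansion} still applies, before invoking shadow stability. The paper avoids this by using the threshold $\omega n^{r-t-1}$ with fixed $\omega>C_{\rm ext}$, which makes the low-degree part $o\bigl(\binom n{t-1}D_t^p\bigr)$ outright.
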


We prove Theorem~\ref{thm:path-asymptotic-stability}.  Throughout this section $r\ge3$, $\ell\ge4$, $p>1$, and $1\le t\le r-1$ are fixed; for convenience, let $a=\left\lfloor(\ell-1)/2\right\rfloor$ and $D_t=\binom{n-t}{r-t}$.

\subsubsection{Lower bound}

Let $A\subseteq[n]$ with $|A|=a$, and take the full $a$-star
\[
   \cS_A=\cH_{n,r,A}=\{E\in\binom{[n]}r:E\cap A\ne\emptyset\}.
\]
The ordinary path with $\ell$ edges has vertex-cover number $\lceil \ell/2\rceil=a+1$, while every edge of $\cS_A$ is covered by $A$.  Hence $\cS_A$ is $P_\ell^r$-free.

If $T\in\binom{[n]}t$ meets $A$, then $d_{\cS_A}(T)=D_t$.  The number of such $T$ is
\[
   (a+o(1))\binom n{t-1}.
\]
If $T\cap A=\emptyset$, then
\[
   d_{\cS_A}(T)=D_t-\binom{n-a-t}{r-t}=O(n^{r-t-1}).
\]
Therefore
\[
   \sum_{T\cap A=\emptyset}d_{\cS_A}(T)^p
   =O(n^t n^{p(r-t-1)})
   =o\left(\binom n{t-1}D_t^p\right),
\]
because $p>1$.  Thus
\[
   \norm{\cS_A}_{t,p}=(a+o(1))\binom n{t-1}D_t^p.
\]
This proves the lower bound.

\subsubsection{Upper bound and stability in the high-shadow range}

Assume first that $3\le t\le r-1$.  Let $\omega>C_{\rm ext}(r,t,\ell)$ be a fixed constant and define the \emph{high $t$-shadow}
\[
   \cG=\left\{T\in\binom{[n]}t:d_\cH(T)>\omega n^{r-t-1}\right\}.
\]
The contribution of $t$-sets outside $\cG$ is at most
\[
   \binom nt(\omega n^{r-t-1})^p
   =O(n^{t+p(r-t-1)})
   =o\left(\binom n{t-1}D_t^p\right).
\]
By Lemma~\ref{lem:constant-expansion}, $\cG$ is $P_\ell^t$-free.  Theorem~\ref{thm:kmv-edge-stability} gives
  $ |\cG|\le (a+o(1))\binom n{t-1}.$

Therefore
\[
   \norm{\cH}_{t,p}
   \le |\cG|D_t^p+o\left(\binom n{t-1}D_t^p\right)
   \le (a+o(1))\binom n{t-1}D_t^p.
\]
This proves the upper bound for $t\ge3$.

Now suppose that $\cH$ is near-extremal in norm:
\[
   \norm{\cH}_{t,p}\ge (a-o(1))\binom n{t-1}D_t^p.
\]
The preceding low-degree estimate implies
\[
   \sum_{T\in\cG}d_\cH(T)^p\ge (a-o(1))\binom n{t-1}D_t^p.
\]
Since $d_\cH(T)\le D_t$ for all $T$, we get
\[
   |\cG|\ge (a-o(1))\binom n{t-1}.
\]
By the stability part of Theorem~\ref{thm:kmv-edge-stability}, there is an $a$-set $A\subseteq[n]$ such that
\[
   |\{T\in\cG:T\cap A=\emptyset\}|=o(n^{t-1}).
\]
Since the family of all $t$-sets meeting $A$ has size $(a+o(1))\binom n{t-1}$, it follows that all but $o(n^{t-1})$ of the $t$-sets meeting $A$ are in $\cG$.

Moreover, near equality also forces a small total degree deficit on these high sets.  Indeed,
\[
   \sum_{T\in\cG}(D_t^p-d_\cH(T)^p)=o\left(\binom n{t-1}D_t^p\right).
\]
By Lemma~\ref{lem:power-comparison}(ii),
\[
   \sum_{\substack{T\in\cG\\T\cap A\ne\emptyset}}(D_t-d_\cH(T))=o(n^{t-1}D_t)=o(n^{r-1}).
\]
The $o(n^{t-1})$ sets meeting $A$ but not in $\cG$ contribute at most $o(n^{t-1})D_t=o(n^{r-1})$ to the same deficit.  Hence
\[
   \sum_{\substack{T\in\binom{[n]}t\\T\cap A\ne\emptyset}}(D_t-d_\cH(T))=o(n^{r-1}).
\]
Each missing edge of the full $A$-star contains at least $\binom{r-1}{t-1}$ $t$-sets meeting $A$.  Therefore
\[
   |\cH_{n,r,A}\setminus\cH|=o(n^{r-1}).
\]
The edge bound in Theorem~\ref{thm:kmv-edge-stability} gives
\[
   |\cH|\le (a+o(1))\binom n{r-1}=|\cH_{n,r,A}|+o(n^{r-1}).
\]
Since $\cH$ contains all but $o(n^{r-1})$ edges of $\cH_{n,r,A}$, it follows that
\[
   |\cH\setminus\cH_{n,r,A}|=o(n^{r-1}).
\]
Thus $|\cH\triangle\cH_{n,r,A}|=o(n^{r-1})$.

\subsubsection{Upper bound and stability in the pair case}

Now let $t=2$ and set $D_2=\binom{n-2}{r-2}$.
Fix $\gamma>0$.  Lemma~\ref{lem:weighted-pair-bound} and Theorem~\ref{thm:kmv-edge-stability} give
\[
\begin{aligned}
   \norm{\cH}_{2,p}
   &\le
   \left(r-1+\binom r2\gamma^{p-1}\right)D_2^{p-1}|\cH|+o(nD_2^p)\\
   &\le
   \left(r-1+\binom r2\gamma^{p-1}\right)D_2^{p-1}(a+o(1))\binom n{r-1}+o(nD_2^p).
\end{aligned}
\]
Since
\[
   (r-1)\binom n{r-1}D_2^{p-1}=(1+o(1))nD_2^p,
\]
we have
\[
   \norm{\cH}_{2,p}\le (a+O(\gamma^{p-1})+o(1))nD_2^p.
\]
Letting first $n\to\infty$ and then $\gamma\to0$ gives
\[
   \norm{\cH}_{2,p}\le (a+o(1))nD_2^p.
\]
Together with the lower bound, this proves the asymptotic value for $t=2$.

It remains to prove stability.  Suppose
\[
   \norm{\cH}_{2,p}\ge (a-o(1))nD_2^p.
\]
Combining this lower bound with Lemma~\ref{lem:weighted-pair-bound}, we obtain, for every fixed $\gamma>0$,
\[
   (a-o(1))nD_2^p
   \le
   \left(r-1+\binom r2\gamma^{p-1}\right)D_2^{p-1}|\cH|+o(nD_2^p).
\]
After dividing by $D_2^{p-1}$ and using
\[
   nD_2=(1+o(1))(r-1)\binom n{r-1},
\]
we get
\[
   |\cH|
   \ge
   \left(a\cdot\frac{r-1}{r-1+\binom r2\gamma^{p-1}}-o(1)\right)
   \binom n{r-1}
   =
   (a-O(\gamma^{p-1})-o(1))\binom n{r-1}.
\]
Since $\gamma>0$ is arbitrary, this implies
\[
   |\cH|\ge (a-o(1))\binom n{r-1}.
\]
The stability part of Theorem~\ref{thm:kmv-edge-stability} now gives an $a$-set $A\subseteq[n]$ such that
\[
   |\cH\triangle\cH_{n,r,A}|=o(n^{r-1}).
\]
This proves the stability assertion for $t=2$.

\subsubsection{Upper bound and stability in the vertex case}

Finally, let $t=1$ and set $D_1=\binom{n-1}{r-1}$.
The lower bound from the full $a$-star is
\[
   \norm{\cH_{n,r,A}}_{1,p}=(a+o(1))D_1^p,
\]
because the vertices in $A$ have degree $D_1$, while every vertex outside $A$ has degree
\[
   D_1-\binom{n-a-1}{r-1}=O(n^{r-2}),
\]
and $n\cdot n^{p(r-2)}=o(D_1^p)$ since $p>1$.

For the upper bound, fix $0<\gamma\le1$. By Lemma~\ref{lem:weighted-vertex-bound} and Theorem~\ref{thm:kmv-edge-stability},
\[
\begin{aligned}
   \norm{\cH}_{1,p}
   &\le \left(1+(r-1)\gamma^{p-1}\right)D_1^{p-1}|\cH|+o(D_1^p)\\
   &\le \left(1+(r-1)\gamma^{p-1}\right)(a+o(1))D_1^p+o(D_1^p).
\end{aligned}
\]
Letting first $n\to\infty$ and then $\gamma\to0$ gives
\[
   \norm{\cH}_{1,p}\le (a+o(1))D_1^p.
\]
Together with the lower bound, this proves the asymptotic value for $t=1$.

Now assume that
\[
   \norm{\cH}_{1,p}\ge (a-o(1))D_1^p.
\]
Using Lemma~\ref{lem:weighted-vertex-bound}, we obtain, for every fixed $0<\gamma\le1$,
\[
   (a-o(1))D_1^p
   \le \left(1+(r-1)\gamma^{p-1}\right)D_1^{p-1}|\cH|+o(D_1^p).
\]
Hence
\[
   |\cH|
   \ge
   \left(\frac{a}{1+(r-1)\gamma^{p-1}}-o(1)\right)D_1.
\]
Since $\gamma$ is arbitrary and $D_1=(1+o(1))\binom n{r-1}$, it follows that
\[
   |\cH|\ge (a-o(1))\binom n{r-1}.
\]
The stability part of Theorem~\ref{thm:kmv-edge-stability} gives an $a$-set $A\subseteq[n]$ such that
\[
   |\cH\triangle\cH_{n,r,A}|=o(n^{r-1}).
\]
This proves the stability assertion for $t=1$ and completes the proof of Theorem~\ref{thm:path-asymptotic-stability}.

\subsection{From stability to exact results}\label{sec:path-exact}

We now prove Theorems~\ref{thm:exact-odd-path} and~\ref{thm:exact-even-path}.  The argument is the standard stability-to-exact comparison: edges outside the stable star can increase the norm only by a controlled amount, while the missing edges of the star cause a larger loss.  In the even case the surviving outside part is allowed, but it must be $2$-intersecting and is then optimized by the background EKR theorem, Theorem~\ref{thm:ekr-convex}, with $k=2$.

\subsubsection{Odd path length}

\begin{proof}[Proof of Theorem~\ref{thm:exact-odd-path}]
Let $\cH$ be an extremal $P_{2a+1}^r$-free family for the $(t,p)$-norm.  The full $a$-star $\cH_{n,r,a}$ is $P_{2a+1}^r$-free, so extremality and Lemma~\ref{lem:vertex-star-norm} give
\[
   \norm{\cH}_{t,p}\ge \norm{\cH_{n,r,a}}_{t,p}
   =(a+o(1))\binom n{t-1}\binom{n-t}{r-t}^p.
\]
By Theorem~\ref{thm:path-asymptotic-stability}, there is an $a$-set $A\subseteq[n]$ such that, with
\[
   \cB=\{H\in\cH:H\cap A=\emptyset\},\qquad
   \cM=\cH_{n,r,A}\setminus\cH,
\]
we have
\[
   |\cB|+|\cM|=o(n^{r-1}).
\]
Set $D_t=\binom{n-t}{r-t}$.
We compare $\cH$ with the full star $\cH_{n,r,A}$.  For $T\cap A\ne\emptyset$, outside edges in $\cB$ do not contribute to $d_\cH(T)$.  If
we write $m_T=|\{M\in\cM:T\subseteq M\}|$, then
the degree of $T$ has been reduced from $D_t$ to $D_t-m_T$.  By Lemma~\ref{lem:power-comparison}(ii), the total loss on the $t$-sets meeting $A$ is at least
\[
   \sum_{T\cap A\ne\emptyset}\left(D_t^p-(D_t-m_T)^p\right)
   \ge D_t^{p-1}\sum_{T\cap A\ne\emptyset}m_T.
\]
Every missing star edge contains at least $\binom{r-1}{t-1}$ $t$-subsets meeting $A$.  Hence this loss is at least
\begin{equation}\label{eq:odd-loss-exact}
   \binom{r-1}{t-1}D_t^{p-1}|\cM|.
\end{equation}

The possible gain from the outside edges is bounded as follows.  For every $T$ disjoint from $A$, let $b_T=|\{B\in\cB:T\subseteq B\}|$.
Before adding the outside edges, the degree of $T$ is at most $D_t-b_T$, and after adding them it is at most $D_t$.  Lemma~\ref{lem:power-comparison}(i) gives
\[
   (x+b_T)^p-x^p\le pD_t^{p-1}b_T
\]
for the relevant value of $x$.  Summing over all $T$ gives a total outside gain at most
\begin{equation}\label{eq:odd-gain-exact}
   pD_t^{p-1}\sum_T b_T
   =p\binom rtD_t^{p-1}|\cB|.
\end{equation}
Choose $\eta>0$ such that
\[
   \eta<\frac{\binom{r-1}{t-1}}{p\binom rt}.
\]
For all sufficiently large $n$, Lemma~\ref{lem:odd-defect} applies.  If $\cB\ne\emptyset$, then $|\cB|\le\eta|\cM|$, and the gain in \eqref{eq:odd-gain-exact} is strictly smaller than the loss in \eqref{eq:odd-loss-exact}.  This contradicts the extremality of $\cH$, because the full $A$-star is admissible.  Therefore $\cB=\emptyset$.

Thus $\cH\subseteq\cH_{n,r,A}$.  If this inclusion is proper, then at least one $t$-degree is strictly smaller than in the full star and no $t$-degree is larger, so the norm is strictly smaller.  Hence $\cH=\cH_{n,r,A}$, as required.
\end{proof}

\subsubsection{Even path length}

\begin{proof}[Proof of Theorem~\ref{thm:exact-even-path}]
Let $a=s-1$, and fix an admissible family $\cE_{n,r,s}(A,Q)$ as in \eqref{eq:even-extremal-family}.  This family is $P_{2s}^r$-free.  Indeed, any two outside edges both contain $Q$ and therefore cannot both lie in a linear path.  Thus a linear path in $\cE_{n,r,s}(A,Q)$ contains at most one outside edge.  The remaining $2s-1$ edges would all have to meet the $(s-1)$-set $A$, but in a linear path a fixed vertex can lie in at most two consecutive edges.  Hence $A$ can cover at most $2s-2$ edges, a contradiction.

Let $\cH$ be an extremal $P_{2s}^r$-free family for the $(t,p)$-norm.  Since $\cE_{n,r,s}$ is admissible and differs from the full $(s-1)$-star only by $O(n^{r-2})$ outside edges, its norm is
\[
   \norm{\cE_{n,r,s}}_{t,p}
   =(s-1+o(1))\binom n{t-1}\binom{n-t}{r-t}^p.
\]
Therefore Theorem~\ref{thm:path-asymptotic-stability} gives an $(s-1)$-set $A\subseteq[n]$ such that, with
\[
   \cB=\{H\in\cH:H\cap A=\emptyset\},\qquad
   \cM=\cH_{n,r,A}\setminus\cH,
\]
we have
\[
   |\cB|+|\cM|=o(n^{r-1}).
\]
Again write $D_t=\binom{n-t}{r-t}$.

Choose $\eta>0$ so small that
\[
   p\binom rt\eta<\frac12\binom{r-1}{t-1}.
\]
For sufficiently large $n$, Lemma~\ref{lem:even-cleaning} gives a $2$-intersecting subfamily $\cB_0\subseteq\cB$ such that
\[
   |\cB\setminus\cB_0|\le \eta|\cM|
\]
and such that $\cH_{n,r,A}\cup\cB_0$ is $P_{2s}^r$-free.
We compare $\cH$ with $\cH':=\cH_{n,r,A}\cup\cB_0$.
Passing from $\cH$ to $\cH'$ adds all missing star edges in $\cM$ and removes the outside edges in $\cB\setminus\cB_0$.  The added star edges increase the norm by at least
\[
   \binom{r-1}{t-1}D_t^{p-1}|\cM|,
\]
by the same loss estimate as in the odd case.  The removed outside edges can decrease the norm by at most
\[
   p\binom rtD_t^{p-1}|\cB\setminus\cB_0|
   \le p\binom rt\eta D_t^{p-1}|\cM|.
\]
By the choice of $\eta$, this is less than half of the gain from adding $\cM$ whenever $\cM\ne\emptyset$.  Hence extremality forces
\[
   \cM=\emptyset,
   \qquad
   \cB=\cB_0.
\]
Thus
\[
   \cH=\cH_{n,r,A}\cup\cB,
\]
where $\cB\subseteq\binom{[n]\setminus A}r$ is $2$-intersecting.

It remains to optimize the outside part.  Let $N=n-|A|=n-s+1$.  The contribution of $t$-sets meeting $A$ is already fixed and equal to that of the full $A$-star.  For every $t$-set $T\subseteq[n]\setminus A$, the background degree supplied by the full $A$-star is the constant $c_t=D_t-\binom{N-t}{r-t}=O(N^{r-t-1})$.
Therefore the part of the norm depending on $\cB$ is exactly
\[
   \sum_{T\in\binom{[n]\setminus A}t}\left(c_t+d_\cB(T)\right)^p
   =\Phi_{c_t,t}(\cB)
\]
on the $N$-vertex ground set $[n]\setminus A$.  By Theorem~\ref{thm:ekr-convex} with $k=2$, this functional is uniquely maximized over $2$-intersecting outside families by a full $2$-star.

Thus
\[
   \cB=\{E\in\binom{[n]\setminus A}r:Q\subseteq E\}
\]
for some pair $Q\subseteq[n]\setminus A$.  Consequently $\cH\cong\cE_{n,r,s}$, and \eqref{eq:even-extremal-value} gives the exact value.
\end{proof}
\section{Concluding remarks}

We have determined the maximum $(t,p)$-norm in three extremal settings for all sufficiently large $n$.  For bounded matching number and for $k$-intersecting families, the convex results hold in the stronger background form $\sum_T(c+d(T))^p$, while the concave zero-background results are exact for every $1\le t\le r-1$.  For expansion paths, the convex range $p>1$ is exact for every $1\le t\le r-1$.  The proof separates the shadow dimensions: high $t$-shadows handle $t\ge3$, a weighted high-pair argument handles $t=2$, and a weighted high-vertex argument handles $t=1$.  These low-dimensional arguments are useful because they avoid separate stability theorems for auxiliary rich-pair or rich-vertex graphs.

With a similar argument, one can give the maximum value of the $(t,p)$-norm for $C_{\ell}^r$-free hypergraphs.
For seek of simplicity, we leave it as future work.

\section*{Acknowledgments}

The author thanks D\'{a}niel Gerbner for helpful discussions and valuable suggestions.

\section*{Declaration on the use of artificial intelligence}

OpenAI's GPT-5.5 was used during the initial exploration of this project.  In particular, it suggested the statement of Lemma~\ref{lem:shift-norm}, which simplified the proof in Section~\ref{sec:matching}.  It also assisted with some inequality calculations, with drafting parts of the proofs, for example, Lemma~\ref{lem:power-comparison}, and with general writing.  The authors have independently checked all arguments and are solely responsible for the correctness and content of the paper.

\bibliography{ref.bib}

\end{document}